\documentclass[11pt,a4paper,leqno,noamsfonts]{amsart}
\linespread{1.3}
\usepackage[english]{babel}
\usepackage[dvipsnames]{xcolor}
\usepackage{graphicx,pifont}
\usepackage[utopia]{mathdesign}
\usepackage[a4paper,top=3cm,bottom=3cm,
           left=3cm,right=3cm,marginparwidth=60pt]{geometry}
\usepackage[utf8]{inputenc}
\usepackage{braket,caption,comment,mathtools,stmaryrd,enumitem}
\usepackage[usestackEOL]{stackengine}
\usepackage{multirow,booktabs,microtype,relsize}
\usepackage[foot]{amsaddr}
\usepackage[colorlinks,bookmarks]{hyperref} %
      \hypersetup{colorlinks,%
            citecolor=britishracinggreen,%
            filecolor=black,%
            linkcolor=cobalt,%
            urlcolor=cornellred}
      \setcounter{tocdepth}{1}
      \setcounter{section}{0}
      \numberwithin{equation}{section}
\usepackage[capitalise]{cleveref}

\DeclareSymbolFont{usualmathcal}{OMS}{cmsy}{m}{n}
\DeclareSymbolFontAlphabet{\mathcal}{usualmathcal}
\DeclareMathAlphabet\BCal{OMS}{cmsy}{b}{n}

\makeatletter
\newcommand{\mylabel}[2]{#2\def\@currentlabel{#2}\label{#1}}
\makeatother

\newcommand\blfootnote[1]{%
  \begingroup
  \renewcommand\thefootnote{}\footnote{#1}%
  \addtocounter{footnote}{-1}%
  \endgroup
}
% colors
\definecolor{cornellred}{rgb}{0.7, 0.11, 0.11}
\definecolor{britishracinggreen}{rgb}{0.0, 0.26, 0.15}
\definecolor{cobalt}{rgb}{0.0, 0.28, 0.67}

\usepackage{amsmath,float,soul}
\DeclareMathOperator{\vd}{vdim}
\DeclareMathOperator{\rk}{rk}
\DeclareMathOperator{\vir}{\mathrm{vir}}

\newcommand{\dd}{\mathrm{d}}
\newcommand{\simto}{\,\widetilde{\to}\,}
\newcommand{\into}{\hookrightarrow}
\newcommand{\HH}{\mathrm{H}}
\newcommand{\OO}{\mathscr O}
\newcommand{\bfk}{\mathbf{k}}
\newcommand{\derived}{\mathbf{D}}
\newcommand{\BRPP}{\mathsf{RPP}}
\newcommand{\boldit}[1]{\boldsymbol{#1}}

\DeclareMathOperator{\bnu}{{\boldit{\nu}}} % bold + italics
\DeclareMathOperator{\bn}{{\boldit{n}}}
\DeclareMathOperator{\sta}{\mathsf{st}}
\DeclareMathOperator{\free}{\mathsf{free}}
\DeclareMathOperator{\Ind}{\mathrm{Ind}}
\DeclareMathOperator{\Fact}{\mathrm{Fact}}
\DeclareMathOperator{\Subsoc}{\mathrm{Subsoc}}
\DeclareMathOperator{\Soc}{\mathrm{Soc}}

\DeclareMathOperator{\Hilb}{Hilb}

\DeclareMathOperator{\Var}{Var}
\DeclareMathOperator{\Spec}{Spec}
\DeclareMathOperator{\Sets}{Sets}
\DeclareMathOperator{\length}{length}
\DeclareMathOperator{\opp}{op}
\DeclareMathOperator{\Sch}{Sch}
\DeclareMathOperator{\Supp}{Supp}

% alphabets
\newcommand{\BA}{{\mathbb{A}}}
\newcommand{\BC}{{\mathbb{C}}}
\newcommand{\BE}{{\mathbb{E}}}
\newcommand{\BL}{{\mathbb{L}}}
\newcommand{\BN}{{\mathbb{N}}}
\newcommand{\BQ}{{\mathbb{Q}}}
\newcommand{\BY}{{\mathbb{Y}}}
\newcommand{\BZ}{{\mathbb{Z}}}
\newcommand{\CB}{{\mathcal B}}
\newcommand{\CE}{{\mathcal E}}
\newcommand{\CZ}{{\mathcal Z}}
\newcommand{\Fm}{{\mathfrak{m}}}

\newcommand{\FS}{{\mathfrak{S}}}

%%%%%%%%%%%%%%%%%%%%%%%%%%%%%%%%%%%%%%%%%%%%%%%%%%%%%%%%%%%%%

%%%%%%%%%%%%%%%%%%%%%%%%%
\usepackage[all]{xy}
\usepackage{tikz}
\usepackage{tikz-cd}
\usepackage{adjustbox}
\usepackage{rotating}
\usepackage{youngtab} % youngtab want label with one entry, so I define some extra commands
\usepackage{ytableau} %better for the labelling

\theoremstyle{definition}

\newtheorem*{conventions*}{Conventions}

\newtheorem{definition}{Definition}[section]
\newtheorem{example}[definition]{Example}
\newtheorem{convention}[definition]{Convention}
\newtheorem{construction}[definition]{Construction}

\newtheoremstyle{thm} % <name> % (ambienti con dimostrazione)
        {3mm}% <Space above>
        {3mm}% <Space below>
        {\slshape}% <Body font> % 
        {0mm}% <Indent amount>
        {\bfseries}% <Theorem head font>
        {.}% <Punctuation after theorem head>
        {1mm}% <Space after theorem head>
        {}% <Theorem head spec (can be left empty, meaning 'normal')> 
\theoremstyle{thm}
\newtheorem{theorem}[definition]{Theorem}
\newtheorem{corollary}[definition]{Corollary}
\newtheorem{lemma}[definition]{Lemma}
\newtheorem{prop}[definition]{Proposition}

\newtheorem{thm}{Theorem}

\theoremstyle{remark}
\newtheorem{remark}[definition]{Remark}
\newtheorem{notation}[definition]{Notation}

%Ser
\newtheorem*{Acknowledgments*}{Acknowledgments}

\usetikzlibrary{patterns}

\title[The geometry of double nested Hilbert schemes of points on curves]{The geometry of double nested Hilbert schemes \\ of points on curves}
\author{Michele Graffeo, Paolo Lella, Sergej Monavari, Andrea T. Ricolfi, Alessio Sammartano}
\keywords{Hilbert schemes, 0-cycles, partitions, Grothendieck ring of varieties}
\subjclass[2020]{Primary 14C05; Secondary 14N35, 20M14, 05E14, 06A07.}
\thanks{S.M.~was  supported by the Chair of Arithmetic Geometry, EPFL. M.G., P.L. and A.S.~were partially supported by the project PRIN 2020 \lq\lq Squarefree Gr\"obner degenerations, special varieties and related topics\rq\rq~(MUR, project number 2020355B8Y). A.R.~was partially supported by the project PRIN 2022 ``Geometry of algebraic structures: moduli, invariants, deformations'' (MUR, project number 2022BTA242). All authors are members of the INdAM group GNSAGA}

\begin{document}
\begin{abstract}
Let $C$ be a smooth curve. In this paper we investigate the geometric properties of the \emph{double nested Hilbert scheme of points} on $C$, a moduli space introduced by the third author in the context of BPS invariants of local curves and sheaf counting on Calabi--Yau 3-folds. We prove this moduli space is connected, reduced and of pure dimension; we list its components via an explicit combinatorial characterisation and we show they can be resolved, when singular, by products of symmetric products of $C$. We achieve this via a purely algebraic analysis of the factorisation properties of the monoid of \emph{reverse plane partitions}. 
We discuss the (virtual) fundamental class of the moduli space, we describe the local equations cutting it inside a smooth ambient space, and finally we provide a closed formula for its motivic class in the Grothendieck ring of varieties. 
\end{abstract}

\maketitle

{\hypersetup{linkcolor=black}\tableofcontents}

%%%%%%%%%%%%%%%%%%%%%%%%%%%%%%%%%%%%%%%%%%%%%%%%%%%%%%%%%%%%%
%%%%%%%%%%%%%%%%%%%%%%%%%%%%%%%%%%%%%%%%%%%%%%%%%%%%%%%%%%%%%
\section{Introduction}

%%%%%%%%%%%%%%%%%%%%%%%%%%%%%%%%%%%%%%%%%%%%%%%%%%%%%%%%%%%%% 
\subsection{Overview}
The main character of this paper is a moduli space called the \emph{double nested Hilbert scheme of points}. It was introduced by the third author in \cite{Mon_double_nested}, where its link with enumerative geometry, e.g.~with the theory of stable pairs and BPS states on Calabi--Yau 3-folds, is discovered and explored in detail (cf.~\Cref{subsec:EG} for more on this). 

The input data is a variety $X$, a partition (or Young diagram) $\lambda \subset \BZ^2$ and a \emph{reverse plane partition} 
\[
\bn = (\bn_{\Box})_{\Box \in \lambda}, \quad \bn_{\Box} \in \BZ_{\geq 0}
\]
of shape $\lambda$. The latter is a way to attach to every box $\Box \in \lambda$ a natural number $\bn_{\Box}$, in such a way that each column and each row of the Young diagram $\lambda$ carries a nondecreasing sequence of natural numbers. Out of this data, one constructs a scheme $X^{[\bn]}$ whose closed points correspond to configurations of $0$-dimensional closed subschemes of $X$
\begin{equation*}
\begin{tikzcd}
Z_{0,0}\arrow[r,hook]\arrow[d,hook] &Z_{1,0}\arrow[d,hook]\arrow[r,hook]&Z_{2,0}\arrow[d,hook]\arrow[r,hook]\arrow[d,hook]&Z_{3,0}\arrow[r,hook]\arrow[d,hook]&\cdots\\      Z_{0,1}\arrow[r,hook]\arrow[d,hook]&Z_{1,1}\arrow[d,hook]\arrow[r,hook]&Z_{2,1}\arrow[d,hook]\arrow[r,hook]&Z_{3,1}\arrow[d,hook]\arrow[r,hook]&\cdots\\
Z_{0,2}\arrow[d,hook]\arrow[r,hook]&Z_{1,2}\arrow[d,hook]\arrow[r,hook]&Z_{2,2}\arrow[d,hook]\arrow[r,hook]& \rotatebox[origin=c]{-45}{$\cdots$} &\\
          \vdots&\vdots&\vdots& &\\
\end{tikzcd}  
\end{equation*}
where the inclusions follow the shape of $\lambda$ and $\chi(\OO_{Z_{\Box}}) = \bn_{\Box}$ for each $\Box \in \lambda$.

This moduli space is a generalisation of the classical \emph{nested Hilbert scheme of points}, which in turn generalises the even more familiar \emph{Hilbert scheme of points} $X^{[n]}=\Hilb^nX$ (cf.~\Cref{subsec:open-problems-Hilb}), parametrising $0$-dimensional closed subschemes $Z \into X$ with $\chi(\OO_Z)=n$. 

The purpose of this paper is to study the geometry of $X^{[\bn]}$ in the case where $X=C$ is a smooth, quasiprojective irreducible curve over an algebraically closed field $\bfk$ of characteristic $0$.
We can sum up our results, informally, as follows:
\begin{itemize}
    \item [\mylabel{moduli-1}{(1)}] The intersection of  all irreducible components of $C^{[\bn]}$ contains a copy of $C$; in particular, $C^{[\bn]}$ is connected (cf.~\Cref{cor:conn}). Moreover, $C^{[\bn]}$ is reduced (cf.~\Cref{thm: complete interse}).
    \item [\mylabel{moduli-2}{(2)}] The scheme $C^{[\bn]}$ is pure, with number of irreducible components only depending on $\bn$ (cf.~\Cref{thm:irr components}). Moreover, we have an explicit formula for its dimension (cf.~\Cref{prop:weight-formula}).
    \item [\mylabel{moduli-3}{(3)}] The irreducible components $V \subset C^{[\bn]}$ are in general not normal, and can be resolved by products of symmetric products of $C$, which coincide with their normalisations (cf.~\Cref{prop:canonical cover}).  We provide a combinatorial classification of the components $V \subset C^{[\bn]}$ (cf.~\Cref{thm:irr components}), a criterion for when a component is singular (cf.~\Cref{thm: main thm for criterion of smooth}), and we exhibit a component $C^{[\bn]}_{\sta}\subset C^{[\bn]}$ which is \emph{always} nonsingular (cf.~\Cref{prop:standardsmooth}).
    \item [\mylabel{moduli-4}{(4)}] The scheme $C^{[\bn]}$ is the zero locus of a section of a vector bundle on a smooth variety $A_{C,\bn}$ and, moreover, the immersion $C^{[\bn]} \into A_{C,\bn}$ is \emph{regular} (cf.~\Cref{thm: complete interse}). 
    \item [\mylabel{moduli-5}{(5)}] The scheme $C^{[\bn]}$ admits explicit \'etale-local equations cutting it out inside \emph{two} different  affine spaces (cf.~\Cref{sec:local-eqns}). The code for computing those equations is available for \textit{Macaulay2} in the package \href{www.paololella.it/software/DoubleNestedHilbertSchemes.m2}{\tt DoubleNestedHilbertSchemes.m2} (cf.~\cref{app:computations}).
    \item [\mylabel{moduli-6}{(6)}] The fundamental class of $C^{[\bn]}$, a homogeneous cycle class by \ref{moduli-2}, agrees with the virtual fundamental class $[C^{[\bn]}]^{\vir} \in A_\ast (C^{[\bn]})$ induced by \ref{moduli-4}, which implies that all virtual intersection numbers on the moduli space are computable by standard (not virtual) methods (cf.~\Cref{cor:VFC=FC}).
    \item [\mylabel{moduli-7}{(7)}] The motivic generating function 
    \[
    \sum_{\bn\in\BRPP^\lambda}\,\bigl[C^{[\bn]}\bigr] \boldit{q}^{\bn}\,\in\, K_0(\Var_{\bfk})\llbracket q_{\Box}\,|\,\Box \in \lambda \rrbracket
    \]
     is computed via an explicit formula in terms of the zeta function of $C$ (cf.~\Cref{cor:motive-curve-dn}), and is a rational function. In particular, all motivic-in-nature invariants of the moduli space are computable via generalised Euler characteristics, i.e.~ring homomorphisms out of $K_0(\Var_{\bfk})$.
\end{itemize}

In the next subsection we give further details about \ref{moduli-2}, \ref{moduli-3} and \ref{moduli-7}.

%%%%%%%%%%%%%%%%%%%%%%%%%%%%%%%%%%%%%%%%%%%%%%%%%%%%%%%%%%%%% 
\subsection{Main results}
Let $\lambda$ be a partition, endowed with its natural poset structure. A \emph{reverse plane partition} of shape $\lambda$ is a poset morphism $\bn \colon \lambda \to \BN$. Reverse plane partitions of shape $\lambda$ form a cancellative commutative monoid with respect to addition, denoted $\BRPP^\lambda$. In \Cref{PropositionAtoms}, we show that every element of the monoid $\BRPP^\lambda$ can be expressed as linear combination of irreducible elements. The \emph{derivative} of a reverse plane partition $\bn\in\BRPP^\lambda$ (cf.~\Cref{def:derivative}) is the map $\Delta \bn \colon \lambda \to \BZ$ given by 
\[
\Delta \bn({i,j})=
        \bn({i,j})-  \bn({i-1,j})- \bn({i,j-1})+ \bn({i-1,j-1}).
\]
Here we identify boxes $\Box \in \lambda$ with their coordinates $(-,-)\in\BZ^2$ (cf. \Cref{convention-boxes}).
The \emph{size} of $\Delta \bn$, by definition, is the quantity
\[
\omega(\boldit{n}) = \sum_{(i,j) \in \lambda} \Delta \bn({i,j}). 
\]
It can be easily expressed in terms of the values  $\bn_{\Box}$, for those boxes $\Box$ belonging to the \emph{socle} and to the \emph{subsocle} of $\lambda$ (cf.~\Cref{prop:weight-formula}).

There are special reverse plane partitions called \emph{Young indicators} (cf.~\Cref{def:Young-Indicator}). Their set is denoted $\Ind(\lambda)$. A \emph{factorisation} of $\boldit{n} \in \BRPP^\lambda$ (cf.~\Cref{def:factorisation}) is a tuple $(n_{\bnu})_{\bnu \in \Ind(\lambda)} \in \BN^{\Ind(\lambda)}$ such that 
\[
\boldit{n} = \sum_{\bnu \in \Ind(\lambda)} n_{\bnu}\cdot\bnu.
\]
The sum $\sum_{\bnu\in \Ind(\lambda)}n_{\bnu} \in \BN$ of the coefficients appearing in the factorisation is called the \emph{length} of the factorisation.

The following is our first main result.

\begin{thm}[\Cref{TheoremRPPHalfFactorial}, \Cref{thm:irr components}]\label{thm:main-A}
Let $C$ be a smooth curve. Let $\lambda$ be a partition, and fix $\bn \in \BRPP^\lambda$. Then,
\begin{itemize}
    \item [\mylabel{main1-1}{\normalfont{(1)}}]  irreducible components of $C^{[\boldit{n}]}$ correspond bijectively to factorisations of $\boldit{n}$, and 
    \item [\mylabel{main1-2}{\normalfont{(2)}}] all factorisations of $\boldit{n}$ have length $\omega(\boldit{n})$, which equals the dimension of the corresponding irreducible component of $C^{[\boldit{n}]}$. 
\end{itemize}
In particular, $C^{[\boldit{n}]}$ is pure of dimension $\omega(\boldit{n})$.
\end{thm}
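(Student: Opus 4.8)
The plan is to realise the irreducible components of $C^{[\bn]}$ as the images of products of symmetric powers of $C$, one map for each factorisation of $\bn$, and then to use the arithmetic of the monoid $\BRPP^\lambda$ to conclude. Since $C$ is a smooth curve I would first use that $\Hilb^n C=\Sym^n C$, so that a closed point of $C^{[\bn]}$ is the same datum as an order-preserving family $(D_\Box)_{\Box\in\lambda}$ of effective divisors on $C$ with $\deg D_\Box=\bn_\Box$. For a factorisation $f=(n_\bnu)_{\bnu\in\Ind(\lambda)}$ of $\bn$ I would introduce the morphism
\[
\sigma_f\colon\ \prod_{\bnu\in\Ind(\lambda)}\Sym^{n_\bnu}C\ \longrightarrow\ C^{[\bn]},\qquad (E_\bnu)_\bnu\ \longmapsto\ \Bigl(\textstyle\sum_{\bnu\in\Ind(\lambda)}\bnu(\Box)\,E_\bnu\Bigr)_{\Box\in\lambda},
\]
which is well defined precisely because $\sum_\bnu\bnu(\Box)\,n_\bnu=\bn_\Box$ for a factorisation and because $\bnu(\Box)\le\bnu(\Box')$ whenever $\Box\le\Box'$. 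Write $V_f\subseteq C^{[\bn]}$ for the closure of the image of $\sigma_f$; it is irreducible.

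The first step is to check that the $V_f$ cover $C^{[\bn]}$. Given $(D_\Box)\in C^{[\bn]}$, for each $p\in C$ the local multiplicities $(\boldit{m}_p)_\Box:=\mathrm{mult}_p(D_\Box)$ assemble into an element $\boldit{m}_p\in\BRPP^\lambda$, and $\bn=\sum_{p\in C}\boldit{m}_p$ is a finite sum. By \Cref{PropositionAtoms} each $\boldit{m}_p$ factors into Young indicators; summing these factorisations over $p$ yields a factorisation $f$ of $\bn$ together with effective divisors $E_\bnu$ realising $(D_\Box)=\sigma_f\bigl((E_\bnu)_\bnu\bigr)$. Hence $C^{[\bn]}=\bigcup_f\operatorname{Im}\sigma_f=\bigcup_f V_f$ is a finite union, so every irreducible component of $C^{[\bn]}$ coincides with one of the $V_f$.

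Next I would determine $\dim V_f$ and show that $V_f$ remembers $f$. Over the dense open subset of $\prod_\bnu\Sym^{n_\bnu}C$ where the $E_\bnu$ are reduced with pairwise disjoint supports, $\sigma_f$ is injective: from $(D_\Box)$ one reads off, for each point $p$ of the total support, the Young indicator $\Box\mapsto\mathrm{mult}_p(D_\Box)$, hence which $E_\bnu$ the point $p$ belongs to — here one uses that $\bnu\mapsto(\bnu(\Box))_\Box$ is injective on $\Ind(\lambda)$. So $\sigma_f$ is generically injective and $\dim V_f=\sum_\bnu n_\bnu$, the length of $f$; the same description shows that, at a general point of $V_f$, the multiset of local models $\boldit{m}_p$ over the points $p$ of the support is exactly the factorisation $f$, so $V_f=V_{f'}$ forces $f=f'$. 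It then remains to invoke \Cref{TheoremRPPHalfFactorial}, which gives that all factorisations of $\bn$ have the common length $\omega(\bn)$: indeed $\omega$ is additive on $\BRPP^\lambda$ (immediate from \Cref{def:derivative}) and equals $1$ on every Young indicator (\Cref{def:Young-Indicator}), whence $\sum_\bnu n_\bnu=\omega(\bn)$. In particular $\dim V_f=\omega(\bn)$ for every $f$.

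Combining everything: the $V_f$ are irreducible of the common dimension $\omega(\bn)$, so an inclusion $V_f\subseteq V_{f'}$ must be an equality, and then $f=f'$; thus the $V_f$ are pairwise incomparable, they are exactly the irreducible components of $C^{[\bn]}$, the assignment $f\mapsto V_f$ is the claimed bijection, and $C^{[\bn]}$ is pure of dimension $\omega(\bn)$, each component carrying the dimension of the factorisation indexing it. I expect the real work, and the main obstacle, to be the combinatorial input behind \Cref{PropositionAtoms} and \Cref{TheoremRPPHalfFactorial}: proving that every reverse plane partition factors into Young indicators at all — the naive decomposition into $\{0,1\}$-valued ``staircase'' layers $\mathbf 1_{\{\bn\ge k\}}$ need not consist of Young indicators, so one must understand which $\{0,1\}$-valued reverse plane partitions are irreducible in $\BRPP^\lambda$ — and then extracting the $\omega$-grading from that analysis. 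The geometric half is comparatively routine once those facts are available.
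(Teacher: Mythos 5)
Your proposal is correct and follows essentially the same route as the paper: the paper likewise decomposes an arbitrary configuration as $\sum_i \bnu_i\cdot P_i$, identifies the dense open locus of ``free'' configurations (distinct $P_i$) with $\coprod_{T\in\Fact(\bn)} C_T$ for $C_T\subset\prod_{\bnu}C^{(n_{\bnu})}$ the locus of reduced, disjointly supported divisors, and then invokes \Cref{PropositionAtoms} and \Cref{TheoremRPPHalfFactorial} exactly as you do (your maps $\sigma_f$ are the paper's $\varphi_V$ from \Cref{prop:canonical cover}, and your ``incomparable closed images'' phrasing is just the closure of the paper's ``connected components of a smooth dense open'' phrasing). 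The combinatorial obstacle you flag at the end is resolved in the paper precisely as you suggest: the staircase layers are indicators of upper sets, which one splits into connected components to land in $\Ind(\lambda)$.
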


A description of the irreducible components of $C^{[\boldit{n}]}$ has independently been obtained by Schimpf in \cite{Schimpf}, where enumerative applications to stable pair theory are also explored.

Since the moduli space $C^{[\bn]}$ may be reducible, it is in general singular. Its irreducible components may happen to be singular as well. In order to describe a resolution of each component $V\subset C^{[\bn]}$, in \Cref{prop:canonical cover} we construct a finite birational morphism 
\[
\begin{tikzcd}
\displaystyle\prod_{\bnu \in \Ind(\lambda)} C^{(n_{\bnu})} \arrow{r}{\varphi_V} & V,
\end{tikzcd}
\]
where we denote by $C^{(m)}$ the $m$-th symmetric product $C^m / \FS_m$ of $C$. We prove that $\varphi_V$ coincides with the normalisation of $V$, where $(n_{\bnu})_{\bnu \in \Ind(\lambda)}$ is the   factorisation corresponding to $V$ under the bijection of        \Cref{thm:main-A}\,\ref{main1-1}.

We prove the following result, describing criteria for nonsingularity of the components.

\begin{thm}[\Cref{thm:smoothness-components}, \Cref{thm: main thm for criterion of smooth}]\label{mainB}
Let $C$ be a smooth curve. Let $\lambda$ be a partition, and fix $\bn \in \BRPP^\lambda$. Let $V\subset C^{[\bn]}$ be an irreducible component, with associated factorisation $T=(n_{\bnu})_{\bnu \in \Ind(\lambda)}$. The following conditions are equivalent:
\begin{itemize}
    \item [{\normalfont{(a)}}] $V$ is smooth,
    \item [{\normalfont{(b)}}] $V$ is normal,
    \item [{\normalfont{(c)}}] $\varphi_V$ is an isomorphism,
    \item [{\normalfont{(d)}}] $\varphi_V$ has injective differential,
    \item [{\normalfont{(e)}}] there is   no nontrivial relation of the form
\begin{equation*}\label{eq:singrelation2}
        \sum_{\bnu\in \Ind(\lambda)}m_{\bnu}\cdot\bnu=0,
    \end{equation*}
where $m_{\bnu}\in \BZ$ are integers and are possibly nonzero only for the indicators $\bnu$ appearing in the factorisation $T$.
\end{itemize}
\end{thm}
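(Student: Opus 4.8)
The plan is to prove the cycle of implications (a) $\Rightarrow$ (b) $\Rightarrow$ (d) $\Rightarrow$ (e) $\Rightarrow$ (c) $\Rightarrow$ (a), leaning on the normalisation morphism $\varphi_V \colon \prod_{\bnu} C^{(n_{\bnu})} \to V$ constructed in \Cref{prop:canonical cover}. The implications (a) $\Rightarrow$ (b) and (c) $\Rightarrow$ (a) are immediate: a smooth variety is normal, and $\prod_\bnu C^{(n_\bnu)}$ is smooth since each symmetric product $C^{(m)}$ of a smooth curve is smooth. For (b) $\Rightarrow$ (c): $\varphi_V$ is a finite birational morphism onto $V$ by \Cref{prop:canonical cover}, and its source is normal (being smooth); if $V$ is itself normal, then by the universal property of normalisation $\varphi_V$ must be an isomorphism. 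For (c) $\Rightarrow$ (d): an isomorphism has injective (indeed bijective) differential everywhere.

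The heart of the argument is the equivalence (d) $\Leftrightarrow$ (e), and then (e) $\Rightarrow$ (c) — or rather $\neg$(e) $\Rightarrow$ $\neg$(c) and $\neg$(e) $\Rightarrow$ $\neg$(d). The idea is that $\varphi_V$ is built by sending a tuple of effective divisors $(D_\bnu)_\bnu$, with $D_\bnu \in C^{(n_\bnu)}$, to the double nested subscheme whose box-$\Box$ cycle is $\sum_\bnu \bnu_\Box \cdot D_\bnu$. Two tuples produce the same point of $C^{[\bn]}$ precisely when, after a common specialisation of the $D_\bnu$ to multiples of single points, the resulting cycles agree box-by-box; analysing this at a point where all $D_\bnu$ are concentrated (with multiplicity one) at a single common point $p \in C$, the fibre of $\varphi_V$ and the behaviour of $d\varphi_V$ are governed exactly by the $\BZ$-linear syzygies among the indicators $\{\bnu : n_\bnu \neq 0\}$ inside $\BRPP^\lambda \otimes \BQ \subset \BZ^\lambda \otimes \BQ$. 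I would make this precise by computing $d\varphi_V$ at such a point in local coordinates: the tangent space to $\prod_\bnu C^{(n_\bnu)}$ decomposes according to the $n_\bnu$ points, and the differential of the cycle map reads off the vectors $\bnu \in \BZ^\lambda$; its kernel is nonzero exactly when those vectors satisfy a nontrivial relation $\sum_\bnu m_\bnu \bnu = 0$. This simultaneously shows $\neg$(e) $\Rightarrow$ $\neg$(d), and, by exhibiting two distinct tuples with the same image (move along the relation, trading multiplicity between indicators concentrated at the same point), shows $\neg$(e) $\Rightarrow$ $\varphi_V$ not injective, hence $\neg$(c).

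For the converse direction inside this equivalence, (e) $\Rightarrow$ (c): assuming no nontrivial relation, the indicators appearing in $T$ are linearly independent over $\BZ$ (equivalently over $\BQ$). I would argue that $\varphi_V$ is then injective on closed points — given the collection of cycles $\{Z_\Box\}_{\Box \in \lambda}$ one recovers each $D_\bnu$ uniquely by solving the linear system $Z_\Box = \sum_\bnu \bnu_\Box D_\bnu$ in the free abelian group of divisors, which has a unique solution in effective divisors by linear independence plus the monoid-theoretic input of \Cref{PropositionAtoms} / \Cref{TheoremRPPHalfFactorial} that such a factorisation exists and is forced — and that $d\varphi_V$ is everywhere injective by the local computation above run in reverse. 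A finite, birational, injective morphism with injective differential onto a variety, from a smooth (hence normal) source, is an isomorphism (it is a closed immersion onto its image by the injectivity of $d\varphi_V$ and finiteness, and birational, hence an isomorphism).

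The main obstacle I anticipate is the explicit local computation of $\varphi_V$ and its differential near the ``deepest'' stratum where all the divisors $D_\bnu$ collide at one point, since there the source $\prod_\bnu C^{(n_\bnu)}$ is still smooth but the cycle map is highly degenerate, and one must carefully track how the combinatorial vectors $\bnu$ enter the Jacobian; relatedly, one must make sure that the relation in (e) is allowed to have \emph{negative} coefficients $m_\bnu \in \BZ$ (not just in $\BN$), which is exactly what the tangent-space linearisation detects, in contrast to the monoid-level factorisations which only see $\BN$-combinations. Handling the bookkeeping so that ``distinct effective tuples with equal image'' corresponds cleanly to ``a nontrivial $\BZ$-relation'' — using that one can always clear denominators and translate a rational relation into an equality of two genuine effective divisor-tuples supported at a common point — is where the care is needed.
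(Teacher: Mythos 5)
Your overall architecture matches the paper's: the easy implications among (a), (b), (c), (d) are handled exactly as in \Cref{thm:smoothness-components} (a finite birational morphism onto a normal integral scheme is an isomorphism; a finite morphism that is injective on points and has injective differential is a closed immersion), and the substantive step is the translation of $\mathrm{d}\varphi_V$ into $\BZ$-linear relations among the indicators appearing in $T$, which is the content of \Cref{prop: inj diff comb}. Two points need attention. First, your claim that a nontrivial relation forces $\varphi_V$ to fail injectivity on closed points is false: two tuples supported at a single common point $p$ and mapping to the same configuration are automatically equal (each $D_{\bnu}$ is forced to be $n_{\bnu}\cdot p$), and once several supporting points are involved, ``trading'' along a relation is only possible when $\lvert m_{\bnu}\rvert \le n_{\bnu}$ for all $\bnu$. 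This is exactly the extra constraint in \Cref{prop: bijec comb}, and \Cref{ex:big} exhibits a component where $\varphi_V$ is bijective on points yet has non-injective differential, because the only relation has a coefficient exceeding the corresponding multiplicity. Fortunately this claim is redundant in your scheme: $\neg$(e) $\Rightarrow$ $\neg$(d) $\Rightarrow$ $\neg$(c) already follows from (c) $\Rightarrow$ (d).

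Second, for (e) $\Rightarrow$ (c) you need injectivity of $\mathrm{d}\varphi_V$ at \emph{every} point of the source, and you rightly flag the computation away from the most degenerate locus as the obstacle. The paper resolves this by reducing étale-locally to $C=\BA^1$ and using torus-equivariance of $\varphi_V$: the locus where the differential drops rank is closed and invariant, hence if nonempty it contains the unique fixed point $(n_{\bnu}\cdot 0)_{\bnu}$, so the single explicit computation there (tangent vectors written as first-order deformations of the monic polynomials $x^{n_{\bnu}}$ in $\bfk[x,\varepsilon]/(\varepsilon^2)$) suffices. On the other hand, your point-by-point recovery of the $D_{\bnu}$ from the linear system $Z_\Box = \sum_{\bnu}\bnu_\Box\cdot D_{\bnu}$ under linear independence of the indicators is a clean and direct way to get injectivity on closed points, somewhat shorter than the paper's route through \Cref{prop: bijec comb} and \Cref{rmk:inj-diff-implies-bijection}.
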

Some purely combinatorial criteria ensuring smoothness of certain classes of distinguished irreducible components are given in Propositions \ref{prop:standardsmooth}, \ref{prop:completesmooth}.

\smallbreak

The last result we want to mention in this introduction is the formula expressing the motive 
\begin{equation}\label{eqn:motive-Cn}
\bigl[C^{[\boldit{n}]}\bigr] \,\in\, K_0(\Var_{\bfk}).
\end{equation}
In \Cref{sec:multiple-ps}, we recall the notion of (multiple variable) \emph{power structure} on the Grothendieck ring of varieties $K_0(\Var_{\bfk})$. The following result expresses the class \eqref{eqn:motive-Cn} as a product of motivic zeta functions
\[
\zeta_C(t) = 1 + \sum_{n > 0}\,\bigl[C^{(n)}\bigr] t^n \,\in\, K_0(\Var_{\bfk})\llbracket t \rrbracket
\]
in appropriate variables. This series was introduced by Kapranov in \cite{Kapranov_rational_zeta}, where its rationality is proved.

\begin{thm}[\Cref{cor:motive-curve-dn}]\label{thm:main-C}
Let $C$ be a smooth curve, $\lambda$ a Young diagram. Introduce a formal variable $q_{\Box}$ for every $\Box \in \lambda$, and set $p_{\Box} = \prod_{\Box' \in H(\Box)} q_{\Box'}$, where $H(\Box)$ is the hook of $\Box$ (cf.~\Cref{notation:hooks}). There is an identity
\[
\sum_{\boldit{n} \in \BRPP^\lambda} \bigl[C^{[\boldit{n}]}\bigr] \boldit{q}^{\boldit{n}}= \prod_{\Box \in \lambda} \, \zeta_C(p_{\Box}) \, \in\, K_0(\Var_{\bfk}) \llbracket q_{\Box} \,|\,\Box \in \lambda \rrbracket.
\]
In particular, the left hand side is a rational function in the variables $q_{\Box}$.
\end{thm}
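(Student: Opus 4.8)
The plan is to compute $\bigl[C^{[\bn]}\bigr]$ for each $\bn$ by stratifying $C^{[\bn]}$ according to the ``collision pattern'' of a configuration, to recognise the resulting motivic identity as an instance of the multivariable power structure on $K_0(\Var_\bfk)$ recalled in \Cref{sec:multiple-ps}, and then to feed in the classical hook--length enumeration of reverse plane partitions together with Kapranov's rationality of $\zeta_C$. Concretely, a $\bfk$-point of $C^{[\bn]}$ is a flag $(Z_\Box)_{\Box\in\lambda}$ of $0$-dimensional subschemes of $C$ with $\chi(\OO_{Z_\Box})=\bn_\Box$; letting $p_1,\dots,p_r$ be the distinct points of $\bigcup_\Box\Supp Z_\Box$ and $\boldit a^{(j)}_\Box=\length_{\OO_{C,p_j}}(\OO_{Z_\Box})_{p_j}$, each $\boldit a^{(j)}$ is a nonzero element of $\BRPP^\lambda$ (monotonicity being forced by $Z_\Box\into Z_{\Box'}$) and $\bn=\sum_{j}\boldit a^{(j)}$. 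Since $\OO_{C,p}$ is a discrete valuation ring, the only length-$\ell$ subscheme of $C$ supported at $p$ is $\Spec\OO_{C,p}/\Fm_p^{\ell}$, so conversely the distinct $p_j$ together with the nonzero $\boldit a^{(j)}$ summing to $\bn$ recover the flag. Grouping points by the multiset of their local types, i.e.\ by a finitely supported function $\mu\colon\BRPP^\lambda\setminus\{0\}\to\BN$ with $\sum_{\boldit a}\mu(\boldit a)\,\boldit a=\bn$, yields a finite constructible partition of $C^{[\bn]}$ into strata $S_\mu$, and the key geometric input is that each $S_\mu$, with reduced structure, is isomorphic to the configuration variety $\bigl(\prod_{\boldit a}C^{\mu(\boldit a)}\setminus\Delta\bigr)\big/\prod_{\boldit a}\FS_{\mu(\boldit a)}$, where $\Delta$ is the locus in which some two coordinates coincide (regardless of their label).

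Granting this, one has $\bigl[C^{[\bn]}\bigr]=\sum_\mu[S_\mu]$ in $K_0(\Var_\bfk)$, and, abbreviating $\boldit q^{\boldit a}=\prod_{\Box}q_\Box^{\boldit a_\Box}$ for $\boldit a\in\BRPP^\lambda$, this is precisely the configuration-space expansion of the multivariable power structure applied to the series $\sum_{\boldit a\in\BRPP^\lambda}\boldit q^{\boldit a}=1+\sum_{\boldit a\neq 0}\boldit q^{\boldit a}$ (the term $\boldit a=0$ contributing the constant $1$, and each $\boldit a$ carrying the class $[\mathrm{pt}]=1$). Thus
\[
\sum_{\bn\in\BRPP^\lambda}\bigl[C^{[\bn]}\bigr]\boldit q^{\bn}=\Bigl(\sum_{\boldit a\in\BRPP^\lambda}\boldit q^{\boldit a}\Bigr)^{[C]}.
\]
Now the base series is the generating function of reverse plane partitions of shape $\lambda$ weighted cell by cell, and the multivariate hook--length identity (Stanley; Gansner; via the Hillman--Grassl correspondence) gives $\sum_{\boldit a\in\BRPP^\lambda}\boldit q^{\boldit a}=\prod_{\Box\in\lambda}(1-p_\Box)^{-1}$, where $p_\Box=\prod_{\Box'\in H(\Box)}q_{\Box'}$. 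Finally, multiplicativity of the power structure and the standard identity $\bigl((1-t)^{-1}\bigr)^{[C]}=\zeta_C(t)$ — applied with $t$ specialised to $p_\Box$, which is harmless since $p_\Box$ has positive total degree — yield
\[
\Bigl(\prod_{\Box\in\lambda}\frac{1}{1-p_\Box}\Bigr)^{[C]}=\prod_{\Box\in\lambda}\bigl((1-p_\Box)^{-1}\bigr)^{[C]}=\prod_{\Box\in\lambda}\zeta_C(p_\Box),
\]
which is the asserted formula; rationality of the right-hand side then follows from the rationality of $\zeta_C$ \cite{Kapranov_rational_zeta}, a finite product of rational functions being rational.

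The step I expect to be the crux is the geometric one: identifying each stratum $S_\mu$ with a configuration variety \emph{as schemes}, not merely establishing the bijection on $\bfk$-points (which is the decomposition described above), since it is exactly this that licenses computing $\bigl[C^{[\bn]}\bigr]$ stratum by stratum in $K_0(\Var_\bfk)$. To upgrade the bijection I would use the universal flag on $C^{[\bn]}$ to produce the tautological morphism $C^{[\bn]}\to\prod_{\Box\in\lambda}C^{(\bn_\Box)}$, note that it is injective on points (a flag of divisors on a curve is determined by the underlying tuple of divisors), restrict to $S_\mu$, compare with the configuration locus of $\prod_{\boldit a}C^{(\mu(\boldit a))}$, and invoke Zariski's main theorem: in characteristic $0$ a bijective morphism onto a normal — here smooth — variety is an isomorphism. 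If, as seems implicit in the construction of the normalisation maps $\varphi_V$, one has already identified $C^{[\bn]}_{\mathrm{red}}$ with the closed subvariety of $\prod_{\Box\in\lambda}C^{(\bn_\Box)}$ cut out by the monotonicity conditions $D_\Box\leq D_{\Box'}$, then this identification is immediate and the remainder is bookkeeping; the power-structure formalism, the Hillman--Grassl identity, and the rationality of $\zeta_C$ are all available off the shelf.
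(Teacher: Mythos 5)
Your proposal is correct and follows essentially the same route as the paper: decompose configurations by their supports and local (punctual) types, recognise the resulting identity as the geometric description of the multivariable power structure applied to $\bigl(\sum_{\boldit{a}\in\BRPP^\lambda}\boldit{q}^{\boldit{a}}\bigr)^{[C]}$, and conclude via Gansner's hook identity and Kapranov's rationality of $\zeta_C$. The only organisational difference is that the paper first proves the power-structure formula $\mathsf Z^\lambda_{X}(\boldit{q})=\mathsf Z^\lambda_{\mathsf{punctual},d}(\boldit{q})^{[X]}$ for a smooth variety of arbitrary dimension (\Cref{thm:power-structure-doublenested}) and then specialises to curves, where the punctual fibre is a reduced point; your scruple about upgrading the point-level bijection on strata to a piecewise isomorphism is legitimate and is handled in the paper only implicitly, by appeal to the standard geometric description of the power structure.
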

%%%%%%%%%%%%%%%%%%%%%%%%%%%%%%%%%%%%%%%%%%%%%%%%%%%%%%%%%%%%% 
\subsection{Some open problems in the subject}\label{subsec:open-problems-Hilb}
Let $X$ be a smooth irreducible variety of dimension $d$. The Hilbert scheme of points $X^{[n]}$ agrees with the symmetric product $X^{(n)} = X^n / \mathfrak S_n$ if $d=1$. In general, $X^{[n]}$ if smooth if and only $d\leq 2$ or $n \leq 3$.

It is known that $X^{[n]}$ becomes pathological if $d \geq 4$ \cite{Pathologies_Hilb}. Very few \emph{geometric} properties are known when $d = 3$, which remains to date the most mysterious case. For instance, we know that if $X$ is a smooth 3-fold, $X^{[n]}$ is irreducible for $n \leq 11$ (see \cite{Douvropoulos2017} and the references therein) and reducible for $n \geq 78$ \cite{IARRO}. However, we do \emph{not} know:
\begin{itemize}
    \item [\mylabel{oq-i}{(i)}] whether $X^{[n]}$ is generically reduced, or whether it satisfies Vakil's Murphy's Law \cite{zbMATH05033665}, 
%    \item [\mylabel{oq-ii}{(ii)}] whether\todo{citare nuovo paper apparso dopo?} the Behrend function on $X^{[n]}$ is constant, which is tightly linked to \ref{oq-i} (see \cite{ricolfi-constant-behrend}),
    \item [\mylabel{oq-ii}{(ii)}] how many components (at most) $X^{[n]}$ has,
    \item [\mylabel{oq-iii}{(iii)}] an example of a nonsmoothable subscheme $Z \into X$.
\end{itemize}
Recently it was proven in \cite{JKS-Behrend-nonconstant} that the Behrend function of $\Hilb^n(\BA^3)$ is nonconstant for $n \geq 24$. This is an indication that the Hilbert scheme of a smooth 3-fold may be generically nonreduced, see \cite{ricolfi-constant-behrend}.
We refer the reader to \cite{Hilb-open-problems} for a recent overview on open problems around Hilbert schemes.

\smallbreak
Let $\underline{n} = (n_1 \leq n_2 \leq \cdots \leq n_p)$ be a nondecreasing sequence of nonnegative integers.
The nested Hilbert scheme $X^{[\underline{n}]}$ on a smooth $d$-dimensional variety $X$ parametrises  chains of 0-dimensional closed subschemes 
\[
Z_1 \into Z_2 \into \cdots \into Z_p \into X
\]
with prescribed lengths
$\chi(\OO_{Z_{i}}) = n_i$.
When $d = 1$, the nested Hilbert scheme $X^{[\underline{n}]}$ is isomorphic to a product of symmetric products of $X$.
Much of the research focuses on the case when $d=2$, which yields a singular moduli space unless $p=2$ and $n_2-n_1 =1$ \cite{Che_cellular_decomposition}.
There is only partial information about the singularities and irreducible components of $X^{[\underline{n}]}$.
For example, $X^{[\underline{n}]}$ is always irreducible if $p=2$ and it can be reducible if $p \geq 5$,
but we do not know what happens when $p =3,4$.
We know very little about the types of singularities that can occur, and we do not   know whether it is always reduced, not even when $p=2$. It is known that $X^{[2,n]}$ has only rational singularities if $\dim X=2$. Moreover, in dimension $d=4$ the nested Hilbert scheme $X^{[\underline n]}$ has a generically nonreduced component already for $\underline n = (1\leq 8)$.
We refer the reader to 
\cite{gangopadhyay2022irreducibility,giovenzana2024unexpected,RamkumarSammartano,RyanTaylor}
for more details and recent progress on this topic.

%%%%%%%%%%%%%%%%%%%%%%%%%%%%%%%%%%%%%%%%%%%%%%%%%%%%%%%%%%%%% 
\subsection{Relation to enumerative geometry}\label{subsec:EG}
The double nested Hilbert scheme was introduced by the third author \cite{Mon_double_nested} in relation to the study of \emph{stable pair invariants} of a \emph{local curve}, that is a quasiprojective 3-fold $X$ obtained as the total space of the direct sum of two line bundles $L_1, L_2$ on a smooth projective curve $C$. In this setting, the relevant virtual invariants of $X$ are computed from the moduli space of stable pairs \cite{PT_curve_counting_derived}, but can be expressed in terms of virtual intersection numbers on $C^{[\boldit{n}]} $ by Graber--Pandharipande virtual localisation \cite{GP_virtual_localization}. This is achieved by noticing that the torus action scaling the fibres of $X$ lifts to the moduli space of stable pairs, and that the connected components of the  fixed locus correspond to double nested Hilbert schemes $C^{[\boldit{n}]}$, cf. \cite{Mon_double_nested}. Thanks to Corollary \ref{cor:VFC=FC},  our work shows, in particular, that all virtual intersection numbers computed in \cite{Mon_double_nested} coincide with nonvirtual intersection numbers of $C^{[\boldit{n}]}$, which can be furthermore expressed as intersection numbers on symmetric products (and Jacobians)  of curves, where  standard techniques of intersection theory apply \cite{ACGH_Volume_1}, cf. Section \ref{sec: virtual fundamental}. 

The study of enumerative geometry in the case of local curves is a key step towards the study of the more general case of  projective Calabi--Yau 3-folds. See for instance the recent work of Pardon \cite{pardon2023universally}, where the author deduces the correspondence between  Gromov--Witten and stable pair invariants for projective Calabi--Yau 3-folds --- first conjectured in \cite{MNOP_1} --- from the analogous statement for local curves, studied e.g. in \cite{BP_local_GW_curves, Mon_double_nested, OP_local_theory_curves}. 

The understanding of the geometry of the double nested Hilbert schemes --- and its motivic invariants --- could play a role in the computation of \emph{refined} BPS invariants  predicted in \cite{CDDP_parabolic} via string-theoretic dualities. Indeed, following the circle of ideas of Nekrasov--Okounkov from the point of view of M-theory \cite{NO_membranes_and_sheaves}, it should be possible to  relate such refined BPS invariants to equivariant $K$-theoretic stable pair invariants of local curves (see e.g.~\cite{CKM_K_theoretic, KOO_2_legDT, Mon_double_nested, Okounk_Lectures_K_theory}).

\begin{conventions*}
We work over an algebraically closed field $\bfk$ of characteristic $0$. A `variety' is a quasiprojective integral scheme of finite type over $\bfk$. For a variety $X$, we denote by $X^{(n)}$ its $n$-th symmetric product $X^n / \mathfrak S_n$. By the word `curve' we mean a $1$-dimensional variety. In particular, a smooth variety will be connected. The irreducible components of a scheme are always considered with the reduced induced closed subscheme structure. Given two sets $A,B$, we denote by $A^B$ the set of all functions $B \to A$. When $A = \BZ$ (resp.~$A = \BN$) and $B$ is finite, then $A^B$ is the set of tuples of integers (resp.~natural numbers) indexed by $B$.
\end{conventions*}

\subsection*{Acknowledgements}
S.M.~is grateful to Francesca Carocci and Maximilian Schimpf for useful conversations, and to Rahul Pandharipande for encouraging us to work this problem out. We are grateful to Joachim Jelisiejew for helpful discussions and for spotting a mistake in the first draft of this paper. We also thank the anonymous referee for several helpful comments, which improved the exposition of the paper.

%\subsection*{Conflict of interest} None.
%\subsection*{Data Availability Statement}  Data availability is not applicable to this article.
%%%%%%%%%%%%%%%%%%%%%%%%%%%%%%%%%%%%%%%%%%%%%%%%%%%%%%%%%%%%%
%%%%%%%%%%%%%%%%%%%%%%%%%%%%%%%%%%%%%%%%%%%%%%%%%%%%%%%%%%%%%
\section{The monoid of reverse plane partitions}\label{subsec:combinatorics}
In this section, we introduce and study the monoid of reverse plane partitions of fixed shape.
As we will see in the next sections, its combinatorics governs the geometry of the double nested Hilbert scheme of points on smooth curves, the main character of this paper. This monoid is also interesting in its own right, from the point of view of combinatorics, factorisation theory, commutative algebra, and toric geometry.

%%%%%%%%%%%%%%%%%%%%%%%%%%%%%%%%%%%%%%%%%%%%%%%%%%%%%%%%%%%%% 
\subsection{Young diagrams and reverse plane partitions}\label{subsec:YD-and-RPP}
A \emph{partition} $\lambda$ is a finite nondecreasing sequence of positive integers 
$\lambda=(\lambda_0, \lambda_1, \ldots,\lambda_{l(\lambda)-1})$. The number of parts $l(\lambda)$ is called the \emph{length} of $\lambda$, and the number
\[
\lvert \lambda \rvert = \sum_{i=0}^{l(\lambda)-1} \lambda_i
\]
is called the \emph{size} of $\lambda$.
A partition $\lambda$ can be equivalently described by its associated \emph{Young diagram},
which is the collection of  pairs  in $(i,j) \in \BN^2$  such that $0\leq j< \lambda_{i}$. In particular, $\lambda$ can be seen as a subset of $\BN^2$.
We emphasise that pairs start at $(0,0)$ in our notation. 

Young diagrams are represented pictorially as collections of boxes stacked in the corner of a `two-dimensional room'. For example, the Young diagram 
\[
\begin{matrix}
       \yng(4,2,1) \quad 
       \ytableausetup{boxsize=1.09em}
       \ytableausetup{boxframe=0.02em}\ytableausetup{aligntableaux=bottom}
   \end{matrix}
\]
corresponds to the partition $\lambda = (3,2,1,1)$, which satisfies  $l(\lambda)=4$ and $\lvert \lambda \rvert = 7$. The length of a partition is then nothing but the number of columns in the associated Young diagram, and the size is the number of boxes.

As is common in the literature, we  identify partitions and Young diagrams, and use the same letter to denote the two corresponding objects. 

\begin{convention}\label{convention-boxes}
We identify a box $\Box \in \lambda$ with its coordinates $(i,j) \in \BN^2$, and simply write $(i,j) \in \lambda$, with the convention that the first coordinate (resp.~the second coordinate) grows in the rightward (resp.~downward) direction.
\end{convention}

We endow Young diagrams with the poset structure inherited from the  componentwise partial order in $\BZ^2$,
that is,  
\begin{equation}\label{eqn:partial-order-lambda}
(i,j)\le (i',j') \qquad \Leftrightarrow \qquad i\le i' \text{~and~} j\le  j'.
\end{equation}

\begin{definition}\label{def:Young-Filling}
Let $\lambda$ be a Young diagram.
A \emph{Young filling} of shape $\lambda$ is a function  $\bn \colon \lambda \to \BZ$.
The value of the Young filling at a box $(i,j) \in \lambda$ is denoted by $\bn(i,j)$.
We say that a \emph{Young filling} is nonnegative if $\bn(i,j)\ge 0$ for all $(i,j) \in \lambda$.
\end{definition}

\begin{convention}\label{con:zero-outside}
Let $\bn$ be a Young filling. It will sometimes be useful to adopt the convention that $\bn(i,j)=0$ for all $(i,j)\in \BZ^2\smallsetminus \lambda$, i.e., to extend $\bn$ by zero to the whole of $\BZ^2$.
\end{convention}

We denote the sets of Young fillings and of nonnegative Young fillings of a given shape $\lambda $ by $\BZ^\lambda$ and $\BN^\lambda$, respectively.
The \emph{size} of a Young filling is the integer
\[
\lvert \bn \rvert = 
\sum_{(i,j) \in\lambda}\bn(i,j).
\]
We represent Young fillings pictorially by filling the underlying Young diagrams with integers,
for example, the following is a nonnegative Young filling of shape $(3,2,1,1)$ and size 14.
\[
\begin{matrix}
\begin{ytableau}
0 & 2 &  2 & 4 \\
1 & 2 \\
3
\end{ytableau}
\end{matrix}
\]

\Cref{def:RPP}  introduces the main character of this section.

\begin{definition}\label{def:RPP}
Let $\lambda$ be a Young diagram.
A \emph{reverse plane partition} of shape $\lambda$ is a nonnegative Young filling
$\bn\colon \lambda \to \BN$ that is nondecreasing with respect to the partial order \eqref{eqn:partial-order-lambda} of $\lambda$.
\end{definition}

In other words, a reverse plane partition is a poset morphism between the posets $\lambda$ and $\BN$.
In down to earth terms, it is a way to fill in a Young diagram with nonnegative integers that is nondecreasing along rows and columns.
Reverse plane partitions appear naturally in combinatorics and representation theory 
\cite{GPT,HG_reversed_plane_partitions,Stanley}.

Two Young fillings of the same shape can be added, yielding another Young filling of the same shape.
Then,  $\BZ^\lambda$  is a finitely generated free abelian group,
and $\BN^\lambda$ is a finitely generated free abelian monoid, with respect to addition. 
In this section, we are interested in the following submonoid of  $\BN^\lambda$.

\begin{notation}
Let $\lambda$ be a Young diagram.
The set of  reverse plane partitions of shape $\lambda$ is denoted by $\BRPP^\lambda$.
It is a cancellative commutative monoid with respect to addition.
\end{notation}

\begin{remark}
The monoid $\BRPP^\lambda$ is a special case of the more general class of monoids 
consisting of  nondecreasing functions from an arbitrary  poset to $\BN$. 
They are treated in \cite{BGSM}, in relation  to some applications to matroid theory. 

The specular version of this construction is the class of monoids of nonincreasing functions from a poset to $\BN$. 
They play a prominent role in combinatorics, appearing in the literature  under the names of $P$-partitions or weak $P$-partitions 
\cite{FR,Garsia,Stanley}. 
When the poset is a Young diagram, one obtains the familiar notion of \emph{plane partitions}.

The two constructions are equivalent, and the equivalence is obtained by  taking the opposite poset, where all the inequalities are reversed.
However, 
we point out that the opposite poset of a Young diagram is not a Young diagram, 
and,
consequently,
the monoid of reverse plane partitions of shape $\lambda$
is not  isomorphic to the monoid of plane partitions of shape $\lambda$. As an example,
consider the Young diagram  $\lambda = (2,1)$.
Then,   $\BRPP^\lambda$ has three irreducible elements (see \Cref{PropositionAtoms} below)
\[
\begin{matrix}
\begin{ytableau}
0 &1\\
0 \\
\end{ytableau}
\end{matrix}\,\qquad
\begin{matrix}
\begin{ytableau}
0 &0\\
1 \\
\end{ytableau}
\end{matrix}\,\qquad
\begin{matrix}
\begin{ytableau}
1 &1\\
1 \\
\end{ytableau}
\end{matrix}\,
\]
whereas the monoid of nonincreasing functions $\lambda \to \BN$ has the four Young fillings 
\[
\begin{matrix}
\begin{ytableau}
1 &0\\
0 \\
\end{ytableau}
\end{matrix}\,\qquad
\begin{matrix}
\begin{ytableau}
1 &1\\
0 \\
\end{ytableau}
\end{matrix}\,\qquad
\begin{matrix}
\begin{ytableau}
1 &0\\
1 \\
\end{ytableau}
\end{matrix}\,\qquad
\begin{matrix}
\begin{ytableau}
1 &1\\
1 \\
\end{ytableau}
\end{matrix}
\]
as irreducible elements.
\end{remark}

%%%%%%%%%%%%%%%%%%%%%%%%%%%%%%%%%%%%%%%%%%%%%%%%%%%%%%%%%%%%% 
\subsection{Factorisations in \texorpdfstring{$\BRPP^\lambda$}{RPP}}\label{subsec:fact,der}
Next, we study factorisations (cf.~\Cref{def:factorisation}) in the monoid $\BRPP^\lambda$. To do so, we need to introduce some basic definitions.

\begin{definition}\label{upperset-connected}
Let $\lambda$ be a partition.
An \emph{upper set} is a  subset $U \subset \lambda $ such that if $(i,j) \in U$ and  $(i',j')\geq (i,j)$, then $(i',j') \in U$.
Equivalently, an upper set is the complement of another Young diagram in $\lambda$.

A subset $V \subset \lambda$ is \emph{connected} if any two boxes in $V$ are joined by a path of boxes of $V$
such that any two consecutive boxes share an edge. 
\end{definition}

Let $U \subset \lambda$ be an upper set. 
The \emph{indicator} of $U$ is the nonnegative Young filling $\chi_U \colon \lambda \to \BN$ defined by 
\[
\chi_U(i,j) = \begin{cases}
1 & \text{if}\quad (i,j) \in U,\\
0 & \text{if}\quad (i,j) \notin U.
\end{cases}
\]
Observe that $\chi_U$ is a reverse plane partition.

\begin{definition}\label{def:Young-Indicator}
We define a \emph{Young indicator} to be the indicator of a nonempty connected upper set. We denote by $\Ind(\lambda)$ the set of Young indicators of shape $\lambda$.
\end{definition}

A nonzero element $x$ of a monoid is  \emph{irreducible} if $x=y+z$ forces $y=0$ or $z=0$.

\begin{prop}\label{PropositionAtoms}
Let $\lambda$ be a Young diagram.
\begin{itemize}
\item [\mylabel{atoms-1}{\normalfont{(1)}}] Every reverse plane partition  is a linear combination of Young indicators with nonnegative integer coefficients. 
\item [\mylabel{atoms-2}{\normalfont{(2)}}] A reverse plane partition  is irreducible if and only if it is a Young indicator.
\item [\mylabel{atoms-3}{\normalfont{(3)}}] The set of Young indicators is the unique minimal set of generators of $\BRPP^\lambda$.
\end{itemize}
\end{prop}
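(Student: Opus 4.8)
The plan is to prove the three statements in a logical order that makes each one feed into the next: first (1), the existence of a decomposition into Young indicators with nonnegative coefficients; then (2), the characterisation of irreducibles; and finally (3), which will follow formally from (1) and (2).

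For part (1), I would argue by induction on the size $\lvert \bn \rvert$. Given a nonzero $\bn \in \BRPP^\lambda$, consider the set $U = \{(i,j) \in \lambda : \bn(i,j) > 0\}$, i.e.\ the support of $\bn$. Because $\bn$ is nondecreasing with respect to the partial order \eqref{eqn:partial-order-lambda}, $U$ is an upper set. However $U$ need not be connected, so I would take $U'$ to be one connected component of $U$ (a minimal nonempty connected upper set contained in $U$, or more carefully: $U$ decomposes into connected components, each of which is itself an upper set since being an upper set is a local condition on the poset — one checks that a connected component of an upper set is again an upper set). Then $\chi_{U'}$ is a Young indicator, $\bn - \chi_{U'}$ is still a nonnegative Young filling, and I must check it is still nondecreasing: for an edge $(i,j) \le (i',j')$ with both in $\lambda$, if both lie in $U'$ we subtract $1$ from each side and the inequality is preserved; if neither lies in $U'$ nothing changes; the remaining case — one endpoint in $U'$, the other not — is where I need $U'$ to be an upper set together with the fact that distinct connected components of $U$ are not comparable under adjacency, so this case cannot arise for a covering pair. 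Hence $\bn - \chi_{U'} \in \BRPP^\lambda$ with strictly smaller size, and induction finishes (1).

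For part (2), one direction is immediate from (1): if $\bn$ is irreducible and nonzero, then writing $\bn = \chi_{U'} + (\bn - \chi_{U'})$ as above forces $\bn - \chi_{U'} = 0$, so $\bn = \chi_{U'}$ is a Young indicator. Conversely, suppose $\bnu = \chi_U$ is a Young indicator and $\chi_U = \bn_1 + \bn_2$ in $\BRPP^\lambda$. Since the values of $\chi_U$ are only $0$ and $1$, and $\bn_1, \bn_2$ are nonnegative, on each box either $\bn_1(i,j) = 0$ or $\bn_2(i,j) = 0$, and outside $U$ both vanish. Thus the supports of $\bn_1$ and $\bn_2$ are disjoint subsets of $U$ whose union is all of $U$; moreover each is an upper set (support of an RPP), and both are relatively open and closed in $U$ with respect to the adjacency graph — contradicting connectedness of $U$ unless one of them is empty. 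Hence $\bn_1 = 0$ or $\bn_2 = 0$, proving irreducibility.

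For part (3), by (1) the set $\Ind(\lambda)$ generates $\BRPP^\lambda$ as a monoid. Minimality and uniqueness follow from a standard fact about cancellative (indeed, reduced/positive) monoids: in such a monoid the set of irreducible elements is contained in every generating set, and if moreover every element is a sum of irreducibles then the irreducibles form the unique minimal generating set. Concretely: no Young indicator can be written as a sum of two nonzero elements of $\BRPP^\lambda$ by (2), hence it cannot be omitted from any generating set; and by (1)–(2) every generator decomposes into Young indicators, so $\Ind(\lambda)$ is contained in the submonoid generated by any generating set — giving both minimality and uniqueness. The main obstacle throughout is the bookkeeping in part (1) around connected components of the support: one must be careful to verify that a connected component of an upper set is again an upper set and that subtracting its indicator preserves monotonicity across every covering relation of $\lambda$; once that is pinned down, the rest is formal.
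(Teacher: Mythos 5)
Your argument is correct in substance but takes a different route from the paper's for part (1): you peel off one Young indicator at a time by induction on $\lvert\bn\rvert$, whereas the paper writes the decomposition in closed form. Letting $\{k_1<\cdots<k_t\}=\bn(\lambda)$ and $V_h=\bn^{-1}(\{k_h,k_h+1,\ldots\})$, the paper observes that each $V_h$ is an upper set, that
\[
\bn=k_1\chi_{V_1}+(k_2-k_1)\chi_{V_2}+\cdots+(k_t-k_{t-1})\chi_{V_t},
\]
and then splits each $\chi_{V_h}$ into the indicators of the connected components of $V_h$ (which, as you also note, are again upper sets). The paper's version is non-inductive and has the side benefit of exhibiting exactly the \emph{standard factorisation} reused later (\Cref{def: standard}); your greedy peeling also produces a factorisation, but not in closed form. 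For part (2) the paper simply cites \cite[Prop.~2]{BGSM}, so your self-contained argument --- the supports $S_1,S_2$ of $\bn_1,\bn_2$ are disjoint upper sets covering $U$ with no adjacency edge between them, contradicting connectedness --- is a genuine addition and is correct. Part (3) matches the paper, which leaves the deduction from (1) and (2) implicit.

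One slip in your part (1) should be repaired. In the monotonicity check you assert that a covering pair with exactly one endpoint in $U'$ ``cannot arise''. It cannot arise with the \emph{smaller} box in $U'$ (since $U'$ is an upper set), nor with the smaller box in $U\smallsetminus U'$ (adjacency would place it in the component $U'$); but it certainly arises with the smaller box outside $U$ altogether, e.g.\ at any minimal element of $U'$. In that case the inequality still survives, because $\bn(i,j)=0\le\bn(i',j')-1$, the right-hand side being nonnegative precisely because $(i',j')$ lies in the support. So your conclusion stands, but the stated justification for the mixed case is wrong and needs this extra line.
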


\begin{proof}
We start proving \ref{atoms-1}. First, we show that every reverse plane partition $\bn\colon \lambda \to \BN$ is a linear combination of indicators.
Let $\{k_1 < \cdots < k_t\} = \bn(\lambda)\subset \BN$ be the image of the function $\bn$.
Then, the set  $V_h = \bn^{-1}(\{k_h,k_h+1,\ldots\})\subset \lambda$ is an upper set for each $h$,
and we have 
$$\bn = k_1 \chi_{V_1} + (k_2-k_1) \chi_{V_2} + \cdots + (k_t-k_{t-1})\chi_{V_t}.$$
We now show that every indicator $\chi_U$ is a sum of Young indicators.
Observe that connectedness is an equivalence relation.
Denote by $U_1, \ldots, U_r$ the (nonempty) connected parts of $U$. Then, each $U_i$ is again an upper set,
so each $\chi_{U_i}$ is a Young indicator.
We have
$\chi_U = \chi_{U_1} + \chi_{U_2} + \cdots + \chi_{U_r}$,
as desired.

The proof of \ref{atoms-2} follows just as in  \cite[Prop.~2]{BGSM}.
Finally, \ref{atoms-3} follows combining \ref{atoms-1} and \ref{atoms-2}.
\end{proof}

\Cref{PropositionAtoms} says that $\BRPP^\lambda$ is an \emph{atomic} monoid, that is, 
every element can be expressed as linear combination of irreducible elements.

\begin{definition}\label{def:factorisation}
Let $\bn \in \BRPP^\lambda$ be a reverse plane partition. 
A \emph{factorisation} of $\bn$ is a tuple of nonnegative integers
\[
\left(n_{\bnu}\right)_{\bnu \in \Ind(\lambda)} 
\in \BN^{\Ind(\lambda)}
\quad\text{such that}\quad
\bn = \sum_{\bnu \in \Ind(\lambda)} n_{\bnu} \cdot \bnu.
\]
The set of Young indicators $\Ind(\lambda)$ was introduced in \Cref{def:Young-Indicator}. The \emph{length} of the factorisation is the integer $\sum_{\bnu \in \Ind(\lambda)} n_{\bnu}$.
We denote by $\Fact(\bn)$ the set of all factorisations of a reverse plane partition $\bn$.  
\end{definition}

\begin{example}\label{Example3Factorisations}
The Young indicators of shape $\lambda = (2,2)$ are
\[
\begin{tikzpicture}
    \node at (0,0) {\young(01,11)};
    \node at (-0.85,-0.05) {$\bnu_1=$};
\end{tikzpicture}
\quad
\begin{tikzpicture}
    \node at (0,0) {\young(00,01)};
    \node at (-0.85,-0.05) {$\bnu_2=$};
\end{tikzpicture}
\quad
\begin{tikzpicture}
    \node at (0,0) {\young(01,01)};
    \node at (-0.85,-0.05) {$\bnu_3=$};
\end{tikzpicture}
\quad
\begin{tikzpicture}
    \node at (0,0) {\young(00,11)};
    \node at (-0.85,-0.05) {$\bnu_4=$};
\end{tikzpicture}
\quad
\begin{tikzpicture}
    \node at (0,0) {\young(11,11)};
    \node at (-0.85,-0.05) {$\bnu_5=$};
\end{tikzpicture}
\]
The reverse plane partition
\[
\begin{tikzpicture}
    \node at (1.3,0.05) {$\in\,\,\, \BRPP^\lambda$};
    \node at (0,0) {\young(02,24)};
    \node at (-0.85,-0.03) {$\bn=$};
\end{tikzpicture}
\]
has three factorisations, namely
\begin{center}
    \begin{tikzpicture}
    \node at (-1.25,0) {$\bn$};
    \node at (0,0) {\young(01,11)};
    \node at (-0.75,0) {$= 2\cdot$};
    \node at (1.55,0) {\young(00,01)};
    \node at (0.8,0) {$+ 2\cdot$};
    
    \node at (0,-1.2) {\young(01,01)};
    \node at (-0.75,-1.2) {$= 2\cdot$};
    \node at (1.55,-1.2) {\young(00,11)};
    \node at (0.8,-1.2) {$+ 2\cdot$};
    
    \node at (0,-2.4) {\young(01,11)};
    \node at (-0.93,-2.45) {$= $};
    \node at (1.55,-2.4) {\young(00,01)};
    \node at (0.77,-2.4) {$+$};
    \node at (3.1,-2.4) {\young(01,01)};
    \node at (2.32,-2.4) {$+$};
    \node at (3.87,-2.4) {$+$};
    \node at (4.65,-2.4) {\young(00,11)};
    \end{tikzpicture}
\end{center}
and one has
\[
\Fact(\bn) = \big\{ (2,2,0,0,0), (0,0,2,2,0), (1,1,1,1,0)\big\}.
\]
Notice that all factorisations of $\bn$ have length 4.
\end{example}

\begin{definition}\label{def:derivative}
The \emph{derivative} of a reverse plane partition $\bn\colon \lambda \to \BN$ is the Young filling $\Delta \bn \colon \lambda \to \BZ$ given by 
   \begin{equation}\label{eq:derivative}
       \Delta \bn({i,j})=
        \bn({i,j})-  \bn({i-1,j})- \bn({i,j-1})+ \bn({i-1,j-1}).
   \end{equation}
The \emph{weight} of a reverse plane partition $\bn$ is the size of its derivative, i.e., the integer 
\begin{equation}\label{eqn:weight(n)}
\omega(\bn) = \lvert\Delta \bn \rvert = \sum_{(i,j) \in \lambda}\Delta \bn({i,j}).
\end{equation}
\end{definition}

Both derivative and weight are additive with respect to the addition in $\BRPP^\lambda$.

The notion of weight gives rise to the following further characterisation of Young indicators.

\begin{lemma}\label{LemmaYoungIndicatorWeightOne}
A reverse plane partition
$\bn$ is a Young indicator if and only if $\omega(\bn) = 1$.
\end{lemma}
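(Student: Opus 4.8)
The plan is to prove both implications by translating the statement about weight into a statement about the structure of the reverse plane partition and its level sets. First I would unwind the definition of the derivative: summing \eqref{eq:derivative} over all boxes in an upper set, one gets a telescoping identity. Concretely, for an upper set $U\subseteq\lambda$ I expect $\sum_{(i,j)\in U}\Delta\chi_U(i,j)$ to collapse, via inclusion--exclusion on the four shifted terms, to a sum over the ``inner corners'' of $U$ (the minimal boxes of $U$), each contributing $+1$. Thus $\omega(\chi_U)$ equals the number of connected components of $U$, or more precisely the number of minimal elements of $U$ that are pairwise incomparable in a way that separates components; in any case, for a \emph{connected} upper set this count is exactly $1$. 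This gives the ``only if'' direction: if $\bn$ is a Young indicator, i.e. $\bn=\chi_U$ for a nonempty connected upper set $U$, then $\omega(\bn)=1$.

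For the converse, suppose $\omega(\bn)=1$. By \Cref{PropositionAtoms}\,\ref{atoms-1}, write $\bn=\sum_{\bnu\in\Ind(\lambda)}n_{\bnu}\cdot\bnu$ as a nonnegative combination of Young indicators. Since the weight is additive (as remarked after \Cref{def:derivative}) and $\omega(\bnu)=1$ for every Young indicator $\bnu$ by the first half of the proof, we get $1=\omega(\bn)=\sum_{\bnu}n_{\bnu}$. A sum of nonnegative integers equal to $1$ forces exactly one $n_{\bnu}$ to be $1$ and all others $0$, hence $\bn=\bnu$ is itself a Young indicator. This is clean provided the first half is done carefully, so the real content is entirely in the telescoping computation.

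The main obstacle I anticipate is making the boundary bookkeeping in the telescoping sum rigorous, using \Cref{con:zero-outside} to handle boxes just outside $\lambda$: one must check that the contributions of $\Delta\chi_U$ at boxes of $\lambda\smallsetminus U$ vanish (they do, since all four relevant values of $\chi_U$ are then $0$, or cancel in pairs along the common boundary of $U$ and $\lambda\smallsetminus U$), and then that $\sum_{(i,j)\in\lambda}\Delta\chi_U(i,j)=\sum_{(i,j)\in U}\Delta\chi_U(i,j)$ counts precisely the minimal boxes of $U$. For that last point, $\Delta\chi_U(i,j)=1$ exactly when $(i,j)\in U$ but $(i-1,j),(i,j-1)\notin U$ (equivalently $(i,j)$ is minimal in $U$), $\Delta\chi_U(i,j)=-1$ is impossible for an upper set (if $(i-1,j-1)\in U$ then $(i-1,j),(i,j-1),(i,j)\in U$ too), and otherwise it is $0$; so $\omega(\chi_U)$ equals the number of minimal elements of $U$. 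Finally I would invoke the easy fact that a connected upper set in $\lambda$ has a unique minimal element — since if $(a,b)$ and $(c,d)$ were two minimal elements of a connected upper set, a path between them inside $U$ using edge-adjacent steps would have to pass through a box below one of them, contradicting minimality — which pins down $\omega(\chi_U)=1$ for Young indicators and completes the argument.
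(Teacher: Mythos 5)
Your converse direction (additivity of $\omega$ plus \Cref{PropositionAtoms} forces a length-one factorisation) is exactly the paper's argument and is fine. The gap is in the forward direction, where two of your key claims are false. First, $\Delta\chi_U(i,j)=-1$ \emph{is} possible for an upper set $U$: it occurs precisely when $(i-1,j)$ and $(i,j-1)$ both lie in $U$ but $(i-1,j-1)$ does not; your case analysis only rules out the configuration with $(i-1,j-1)\in U$. Second, a connected upper set need not have a unique minimal element: for $\lambda=(2,2)$ the set $U=\lambda\smallsetminus\{(0,0)\}$ is a connected upper set with the two incomparable minimal elements $(1,0)$ and $(0,1)$, and the path joining them through $(1,1)$ passes \emph{above} both of them, so your path argument yields no contradiction. (If every connected upper set had a unique minimum, every factorisation would be complete in the sense of \Cref{def-standard-factorisation}, which the paper shows fails in general.) In this example $\Delta\chi_U$ equals $-1$ at $(1,1)$, and $\omega(\chi_U)=2-1=1$, not $2$.

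The correct bookkeeping, which is what the paper's proof carries out, is: $\Delta\chi_U$ equals $1$ exactly at the $t$ minimal elements $(i_1,j_1),\ldots,(i_t,j_t)$ of $U$ (ordered so that $i_1<\cdots<i_t$ and $j_1>\cdots>j_t$), and equals $-1$ exactly at the $t-1$ joins $(i_{k+1},j_k)$ of consecutive minimal elements, which lie in $U$ precisely because $U$ is connected; hence $\omega(\chi_U)=t-(t-1)=1$. Your assertion that $\omega(\chi_U)$ equals the number of connected components of $U$ is in fact true, but not for the reason you give (it is not the number of minimal elements), so you would need to repair the count of the $-1$ contributions before the rest of your argument goes through.
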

\begin{proof}
Suppose $\bn$ is a Young indicator, and let $U$ be the nonempty connected upper set such that $\bn=\chi_U$.
Let  $\boldit{d} = \Delta \bn$, 
and let $\mathrm{Min}(U) = \{(i_1, j_1), (i_2, j_2), \ldots, (i_t, j_t)\}$ denote
the set of minimal elements of $U$,
with $i_1 <\cdots < i_t$ and $j_1 >  \cdots > j_t$. 
For any $(i,j) \in \lambda$, 
it follows from the definitions of derivative and upper set that 
$-1 \leq \boldit{d}(i,j) \leq 1$,
that
 $\boldit{d}(i,j) = 1$ if and only if $(i,j) \in U$ while $(i-1,j),(i,j-1)\notin U$,
and that
$\boldit{d}(i,j) = -1$ if and only if  $(i-1,j),(i,j-1) \in U$ while $(i-1,j-1) \notin U$.
 In other words, 
 $\boldit{d}(i,j) = 1$ if and only if $(i,j) \in \mathrm{Min}(U)$,
 and 
$\boldit{d}(i,j) = 1$ if and only if $(i,j) \in V = 
\{(i_{2}, j_{1}), (i_{3}, j_{2}), \ldots, (i_{t}, j_{t-1})\}$.
By the previous discussion, 
we have $\omega(\bn)=\lvert\boldit{d}\rvert=\lvert\mathrm{Min}(U)\rvert-\lvert V\rvert =  t-(t-1) = 1$.

Conversely, suppose  $\omega(\bn)=1$. 
By  \Cref{PropositionAtoms},  additivity of $\omega$, and the first half of the proof it follows that $\bn$ must be a Young indicator.
%Suppose $\bn$ is a Young indicator, and let $U$ be the nonempty connected upper set such that $\bn=\chi_U$.
%Let  $\boldit{d} = \Delta \bn$ and 
%denote by $\mathrm{Min}(U)$ the set of minimal elements of $U$. 
%It follows from the definitions of derivative and upper set that 
%$\boldit{d}(i,j) = 0$ for all $(i,j) \notin U$.
%Likewise, we have  $\boldit{d}(i,j) = 1$ for  $(i,j) \in \mathrm{Min}(U)$.
%If $(i,j) \in U$ shares neither  coordinate with any element of $ \mathrm{Min}(U)$,
%then $(i-1,j-1),(i-1,j),(i,j-1) \in U$,
%and, thus,  $\boldit{d}(i,j) = 1-1-1+1=0$.
%If $(i,j) \in U$ shares a coordinate with exactly one element of $ \mathrm{Min}(U)$,
%then $(i-1,j-1)\notin U$, and exactly one of $(i-1,j),(i,j-1)$ belongs to $U$,
%thus,  $\boldit{d}(i,j) = 1-1-0+0=0$.
%Finally, if $(i,j)$ shares a coordinate with two elements of $ \mathrm{Min}(U)$,
%then $(i-1,j-1)\notin U$ and $(i-1,j),(i,j-1)\in U$, 
%thus  $\boldit{d}(i,j) = 1-1-1+0=-1$;
%let $V$ denote the set  of such pairs $(i,j)$.
%By the previous discussion, 
%we have $\omega(\bn)=\lvert\boldit{d}\rvert=\lvert\mathrm{Min}(U)\rvert-\lvert V\rvert$.
%Denoting $\mathrm{Min}(U) = \{(i_1, j_1), (i_2, j_2), \ldots, (i_t, j_t)\}$, with $i_1 <\cdots < i_t$ and $j_1 >  \cdots > j_t$, the fact that $U$ is a connected upper set implies that $V = 
%\{(i_{1}, j_{2}), (i_{2}, j_{3}), \ldots, (i_{t-1}, j_t)\}$,
%thus, $\omega(\bn) = t-(t-1) = 1$.
%
%Conversely, suppose  $\omega(\bn)=1$. 
%By  \Cref{PropositionAtoms},  additivity of $\omega$, and the first half of the proof it follows that $\bn$ must be a Young indicator.
\end{proof}

The following is the main result of this section.

\begin{theorem}\label{TheoremRPPHalfFactorial}
Let $\bn$ be a reverse plane partition. 
Then the length of any factorisation of $\bn$ is equal to the weight $\omega(\bn)$.
\end{theorem}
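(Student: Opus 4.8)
The plan is to exploit the additivity of the weight $\omega$ together with \Cref{LemmaYoungIndicatorWeightOne}. Suppose $\bn = \sum_{\bnu \in \Ind(\lambda)} n_{\bnu} \cdot \bnu$ is a factorisation. Since weight is additive with respect to addition in $\BRPP^\lambda$, we get
\[
\omega(\bn) = \sum_{\bnu \in \Ind(\lambda)} n_{\bnu} \cdot \omega(\bnu).
\]
By \Cref{LemmaYoungIndicatorWeightOne}, each Young indicator $\bnu$ has $\omega(\bnu) = 1$, so the right-hand side collapses to $\sum_{\bnu} n_{\bnu}$, which is precisely the length of the factorisation. Hence every factorisation of $\bn$ has length exactly $\omega(\bn)$.

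In fact the only thing that needs checking carefully is the additivity of $\omega$ (asserted in the line ``Both derivative and weight are additive'' just after \Cref{def:derivative}), and the fact that a factorisation does genuinely exist — but existence of a factorisation is not needed for the statement, which asserts a property of \emph{any} factorisation. Additivity of the derivative $\Delta$ is immediate from formula \eqref{eq:derivative}, since each term is linear in $\bn$; additivity of $\omega = \lvert \Delta(-)\rvert$ then follows because $\lvert - \rvert$ is itself additive (it is a sum of values). So the proof is essentially a one-line computation once these formal facts are in place.

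There is no real obstacle here: the conceptual work has already been done in \Cref{LemmaYoungIndicatorWeightOne}, which identifies Young indicators as exactly the weight-one reverse plane partitions, and the additivity remarks. The only mild subtlety is making sure the sum over $\Ind(\lambda)$ (a finite set, since $\lambda$ is finite) is manipulated correctly, but this is routine. I would therefore present the argument in two sentences: invoke additivity of $\omega$ to write $\omega(\bn) = \sum_{\bnu} n_{\bnu}\,\omega(\bnu)$, then invoke \Cref{LemmaYoungIndicatorWeightOne} to replace each $\omega(\bnu)$ by $1$, concluding $\omega(\bn) = \sum_{\bnu} n_{\bnu}$, the length of the factorisation.
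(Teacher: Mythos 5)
Your argument is correct and is essentially identical to the paper's proof, which likewise combines the additivity of $\omega$ with \Cref{LemmaYoungIndicatorWeightOne} (the paper also cites \Cref{PropositionAtoms}, but as you rightly note, existence of a factorisation is not needed for the statement as phrased). Nothing is missing.
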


\begin{proof}
Combine \Cref{PropositionAtoms}, the additivity of $\omega$, and \Cref{LemmaYoungIndicatorWeightOne} with one another.
\end{proof}

\Cref{TheoremRPPHalfFactorial} says that $\BRPP^\lambda$ is a \emph{half-factorial} monoid, that is, even though factorisations are generally not unique, all factorisations of a given element have the same length. 
This notion originates in algebraic number theory,
and is of central importance in factorisation theory \cite{CC,GSOSN,GHK}, and in this paper precisely reflects, from a combinatorial perspective, the equidimensionality of $C^{[\bn]}$ (cf.~\Cref{thm:irr components}).

\begin{remark}
To the monoids $\BRPP^\lambda \subset \BN^\lambda$ we can associate the toric rings
$\BC[\BRPP^\lambda]$.
They were studied e.g.~in \cite{FR,Garsia} in the more general context of $P$-partitions.
In particular, they are always  normal Cohen--Macaulay rings.
Moreover, 
it follows from \Cref{TheoremRPPHalfFactorial} that the associated toric ideal, i.e., the ideal $I \subset \BC[\BN^\lambda]$ such that $\BC[\BRPP^\lambda]=\BC[\BN^\lambda]/I$, is homogeneous, 
and, therefore, it defines a projective toric variety.
\end{remark}

We conclude this section presenting a direct process for computing the weight of a reverse plane partition (cf.~\Cref{prop:weight-formula} for the formula).

\begin{definition}\label{def:socle-subsocle}
Let $\lambda$ be a partition. Its \emph{socle} $\Soc(\lambda)$ is the set of maximal elements of $\lambda$ with respect to the partial order, equivalently, 
\[
\Soc(\lambda) =\Set{ (i,j) \in \lambda \, | \, (i+1, j), (i,j+1) \notin \lambda}\subset\lambda.
\]
We define the \emph{subsocle} of $\lambda$ to be 
\[
\Subsoc(\lambda)
=\Set{ (i,j) \in \lambda \, | \,
(i+1, j), (i,j+1) \in \lambda, (i+1,j+1) \notin \lambda}\subset\lambda.
\]
\end{definition}
For example, in the partition $\lambda = (5,5,3,3,2,2)$ defined by the Young diagram 
\begin{center}
\vspace{0.3cm}
\begin{tikzpicture}[scale=0.85]
\draw (0,0)--(1,0)--(1,1)--(2,1)--(2,1.5)--(3,1.5)--(3,2.5)--(0,2.5) -- cycle 
    (0,2 ) -- (3,2) (0,1.5 ) -- (2,1.5) (0,1 ) -- (1,1) (0,0.5 ) -- (1,0.5) (0.5,0)--(0.5,2.5) (1,1)--(1,2.5) (1.5,1)--(1.5,2.5) (2,1.5)--(2,2.5) (2.5,1.5)--(2.5,2.5);

    \draw[pattern=crosshatch] (0.5,0)--(1,0)--(1,0.5)--(0.5,0.5)--cycle;
    \draw[pattern=crosshatch] (1.5,1)--(2,1)--(2,1.5)--(1.5,1.5)--cycle;
    \draw[pattern=crosshatch] (2.5,1.5)--(3,1.5)--(3,2)--(2.5,2)--cycle;
    
    \draw[pattern=grid] (1.5,1.5)--(2,1.5)--(2,2)--(1.5,2)--cycle;
    
    \draw[pattern=grid] (0.5,1)--(1,1)--(1,1.5)--(0.5,1.5)--cycle;
\end{tikzpicture}
\end{center}
the \tikz[scale=0.5]{ 
    \draw[pattern=crosshatch] (0.5,0)--(1,0)--(1,0.5)--(0.5,0.5)--cycle; 
}-boxes $(1,4)$, $(3,2)$ and $(5,1)$ are the elements of the socle, whereas 
the \tikz[scale=0.5]{
    \draw[pattern=grid] (0.5,0)--(1,0)--(1,0.5)--(0.5,0.5)--cycle; 
}-boxes $(1,2)$ and $(3,1)$ are the elements of the subsocle.

We totally order the elements of the socle $\Soc(\lambda)$ by declaring $(i,j)\preceq (l,k)$ if $i\leq l$, for all $(i,j), (l,k)\in \Soc(\lambda)$, and we say that two socle elements are \emph{consecutive} if they are consecutive with respect to this total order.

\begin{prop}\label{prop:weight-formula}
Let $\bn$ be a reverse plane partition with shape $\lambda$.
Then
\[
\omega(\bn) = \sum_{(i,j)\in \Soc(\lambda)} \bn(i,j)\ - \sum_{(i,j)\in \Subsoc(\lambda)} \bn(i,j).
\]
\end{prop}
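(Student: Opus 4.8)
The strategy is to compute the sum $\sum_{(i,j)\in\lambda}\Delta\bn(i,j)$ directly by unpacking the four-term definition of the derivative and reorganising the resulting expression as a telescoping sum over the boxes of $\lambda$. Writing $\Delta\bn(i,j) = \bn(i,j) - \bn(i-1,j) - \bn(i,j-1) + \bn(i-1,j-1)$ (with the convention $\bn \equiv 0$ outside $\lambda$, cf.~\Cref{con:zero-outside}), I would collect, for each fixed box $(a,b)\in\lambda$, the total coefficient with which $\bn(a,b)$ appears in the grand sum $\sum_{(i,j)\in\lambda}\Delta\bn(i,j)$. The box $(a,b)$ contributes $+1$ from the term indexed by $(i,j)=(a,b)$; it contributes $-1$ from $(i,j)=(a+1,b)$ \emph{provided} $(a+1,b)\in\lambda$; it contributes $-1$ from $(i,j)=(a,b+1)$ provided $(a,b+1)\in\lambda$; and it contributes $+1$ from $(i,j)=(a+1,b+1)$ provided $(a+1,b+1)\in\lambda$. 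So the net coefficient of $\bn(a,b)$ is $1 - [\,(a+1,b)\in\lambda\,] - [\,(a,b+1)\in\lambda\,] + [\,(a+1,b+1)\in\lambda\,]$, using Iverson brackets.

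\smallskip

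\textbf{Case analysis on the coefficient.} The next step is to observe that, because $\lambda$ is a Young diagram, $(a+1,b+1)\in\lambda$ forces both $(a+1,b)\in\lambda$ and $(a,b+1)\in\lambda$, so the four-term expression takes only three possible values. It equals $1$ exactly when neither $(a+1,b)$ nor $(a,b+1)$ lies in $\lambda$, i.e.\ precisely when $(a,b)\in\Soc(\lambda)$. It equals $-1$ exactly when both $(a+1,b)$ and $(a,b+1)$ lie in $\lambda$ but $(a+1,b+1)$ does not, i.e.\ precisely when $(a,b)\in\Subsoc(\lambda)$. In the two remaining configurations — either exactly one of $(a+1,b),(a,b+1)$ is in $\lambda$ (and then $(a+1,b+1)\notin\lambda$ automatically), or all three are in $\lambda$ — the coefficient is $0$. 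Hence
\[
\omega(\bn) \;=\; \sum_{(i,j)\in\lambda}\Delta\bn(i,j) \;=\; \sum_{(a,b)\in\Soc(\lambda)}\bn(a,b)\;-\;\sum_{(a,b)\in\Subsoc(\lambda)}\bn(a,b),
\]
which is the claimed formula.

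\smallskip

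\textbf{Main obstacle.} There is no serious obstacle here: the argument is a finite bookkeeping exercise, and the only point requiring a little care is the verification that the coefficient vanishes in the ``mixed'' case where exactly one of $(a+1,b),(a,b+1)$ belongs to $\lambda$ — one must check that $(a+1,b+1)\notin\lambda$ in that situation, which again follows from the defining property of a Young diagram (if $(a+1,b+1)\in\lambda$ then so is its predecessor in each coordinate direction). An alternative, perhaps cleaner, phrasing of the same computation is to group the boxes of $\lambda$ by rows (or columns) and telescope: summing $\Delta\bn(i,j)$ over a full row collapses, by the definition of $\Delta$, to a difference of ``row sums'' of $\bn$ between consecutive rows, and then summing over rows telescopes again; tracking which boundary terms survive reproduces exactly the socle/subsocle indexing. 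I would present whichever of the two bookkeeping schemes is shorter, likely the coefficient-counting one sketched above.
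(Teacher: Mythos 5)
Your proof is correct, but it takes a genuinely different route from the paper's. The paper observes that both sides of the identity are additive under addition in $\BRPP^\lambda$, invokes \Cref{PropositionAtoms} and \Cref{LemmaYoungIndicatorWeightOne} to reduce to the case where $\bn$ is a Young indicator $\chi_U$, and then checks the single combinatorial fact $\lvert U \cap \Soc(\lambda)\rvert = 1 + \lvert U \cap \Subsoc(\lambda)\rvert$ for a nonempty connected upper set $U$ — a two-line proof riding on the monoid machinery already in place. You instead expand $\sum_{(i,j)\in\lambda}\Delta\bn(i,j)$ and count the net coefficient of each $\bn(a,b)$, which is $1 - [\,(a+1,b)\in\lambda\,] - [\,(a,b+1)\in\lambda\,] + [\,(a+1,b+1)\in\lambda\,]$; your case analysis (using that $(a+1,b+1)\in\lambda$ forces its two predecessors into $\lambda$, and that in the mixed case $(a+1,b+1)\notin\lambda$ automatically) correctly identifies this coefficient as $+1$ on $\Soc(\lambda)$, $-1$ on $\Subsoc(\lambda)$, and $0$ elsewhere. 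Your argument is fully self-contained — it uses neither the atomicity of $\BRPP^\lambda$ nor the weight-one characterisation of indicators — and in fact proves the slightly stronger statement that the identity $\lvert\Delta\bn\rvert = \sum_{\Soc(\lambda)}\bn - \sum_{\Subsoc(\lambda)}\bn$ holds for \emph{every} Young filling $\bn\colon\lambda\to\BZ$, since monotonicity is never used. The paper's proof is shorter given its surrounding infrastructure and reinforces the "check a linear identity on the generators of the monoid" viewpoint; yours is more elementary and more general.
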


\begin{proof}
Both sides of the equation are additive with respect to the addition of $\BRPP^\lambda$,
thus, by \Cref{PropositionAtoms} and  \Cref{LemmaYoungIndicatorWeightOne},
it suffices to show that the right hand side is equal to one when $\bn$ is a Young indicator.
This is immediate, since for any nonempty connected upper set $U$ we have
$|U \cap \Soc(\lambda)| = 1 + |U \cap \Subsoc(\lambda)|$.
\end{proof}

%\begin{remark}\label{rem:onesocle}
%Let $\bn\in\BRPP^\lambda$ be a reverse plane partition of shape $\lambda$. If the socle of $\lambda$ has only one element, i.e.~$\Soc(\lambda)=\set{(i_0,j_0)=(l(\lambda)-1,\lambda_{1}-1)}$, then 
%    \[\omega (\bn)=\bn(i_0,j_0). \]
%    This can be shown, for instance, by induction on the length of $\lambda$.
%\end{remark}

We conclude this section by discussing some distinguished classes  of factorisations in the monoid $\BRPP^\lambda$.
We will see in Subsection \ref{sec: smoothness} that they geometrically correspond to smooth irreducible components of the double nested Hilbert scheme.

\begin{definition}\label{def: standard}
Let $\bn$ be a reverse plane partition of shape $\lambda$. 
Let $\bn(\lambda)=\{k_1 < \cdots < k_t\}\subset \BN$ be the set of values labelling  the boxes of $\lambda$,
and set $k_0=0$. For $s=1,\ldots,t$, consider the upper sets 
\[
V_s=\bn^{-1}(\{k_s,\dots, k_t\})\subset \lambda,
\]
and their unique decompositions
$
V_s=\coprod_{i=1}^{r_s}U_{s,i}$,
where each $U_{s,i}$ is a nonempty connected upper set. 
The \emph{standard factorisation}  of $\bn$ is  given by
\[
\bn = \sum_{s=1}^t\sum_{i=1}^{r_s} \,( k_s-k_{s-1})\chi_{U_{s,i}}.
\]
\end{definition}

Standard factorisations appeared in the  proof of Proposition \ref{PropositionAtoms}.
For each  element of $\BRPP^\lambda$,
the standard factorisation always exists  and is unique.

 \begin{definition}\label{def-standard-factorisation}
 Let $\bn $ be a reverse plane partition of shape $\lambda$.
 A factorisation $T$ of $\bn$ is \emph{complete} if,
 for every Young indicator $\chi_U$ appearing in $T$ (with nonzero coefficient), the upper set $U \subseteq \lambda$ has a unique minimum with respect to the partial order of $\lambda$.
\end{definition}

\begin{example}
In Example \ref{Example3Factorisations},     
the factorisation
\begin{center}
    \begin{tikzpicture}
    \node at (-1.25,0) {$\bn$};
    \node at (0,0) {\young(01,11)};
    \node at (-0.75,0) {$= 2\cdot$};
    \node at (1.55,0) {\young(00,01)};
    \node at (0.8,0) {$+ 2\cdot$};
    \end{tikzpicture}
\end{center}
is the standard factorisation of $\bn$, whereas
\begin{center}
    \begin{tikzpicture}
    \node at (-1.25,-1.2) {$\bn$};
    \node at (0,-1.2) {\young(01,01)};
    \node at (-0.75,-1.2) {$= 2\cdot$};
    \node at (1.55,-1.2) {\young(00,11)};
    \node at (0.8,-1.2) {$+ 2\cdot$};   
    \end{tikzpicture}
\end{center}
is a complete factorisation.
\end{example}

Unlike the standard factorisation, complete factorisations of a given reverse plane partition $\bn$ may not exist: for   example, take any Young indicator which is not complete as a factorisation.
However, we will see that if a complete factorisation exists, it is unique. 

\begin{prop}\label{prop-complete}
Let $\lambda$ be a partition. A reverse plane partition $\bn \in \BRPP^\lambda$ admits a complete factorisation if and only if $\Delta \bn$ is a nonnegative Young filling.
\end{prop}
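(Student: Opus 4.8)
I will prove the two implications separately. Throughout, write $\boldit{d} = \Delta\bn$ for the derivative and recall from the proof of \Cref{PropositionAtoms} and \Cref{LemmaYoungIndicatorWeightOne} that for an indicator $\chi_U$ of a connected upper set $U$, the derivative $\Delta\chi_U$ takes the value $1$ exactly on $\mathrm{Min}(U)$, the value $-1$ on a set of size $|\mathrm{Min}(U)|-1$, and $0$ elsewhere; in particular $\Delta\chi_U\geq 0$ everywhere precisely when $|\mathrm{Min}(U)| = 1$, i.e.\ when $U$ has a unique minimum. This is the local fact that makes the whole statement go.

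\textbf{($\Rightarrow$)} Suppose $\bn$ admits a complete factorisation $T = (n_{\bnu})_{\bnu\in\Ind(\lambda)}$. Since the derivative is additive, $\Delta\bn = \sum_{\bnu} n_{\bnu}\,\Delta\bnu$. Every indicator $\bnu = \chi_U$ appearing with $n_{\bnu} > 0$ is complete, so $U$ has a unique minimum and hence $\Delta\bnu\geq 0$ by the remark above. A nonnegative combination of nonnegative Young fillings is nonnegative, so $\Delta\bn$ is a nonnegative Young filling.

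\textbf{($\Leftarrow$)} Suppose $\Delta\bn\geq 0$. The plan is to build a complete factorisation by hand. Consider the standard factorisation $\bn = \sum_{s=1}^t\sum_{i=1}^{r_s}(k_s - k_{s-1})\chi_{U_{s,i}}$ from \Cref{def: standard}, where the $U_{s,i}$ are the connected components of the level-up sets $V_s = \bn^{-1}(\{k_s,\dots,k_t\})$. If every $U_{s,i}$ already has a unique minimum we are done. In general they need not; so I would instead argue as follows. For each box $\Box$ with $\boldit{d}(\Box) = \Delta\bn(\Box) > 0$, the upper set $U_\Box = \{\Box' \in\lambda : \Box'\geq \Box\}$ is connected with unique minimum $\Box$, hence $\chi_{U_\Box}$ is a complete Young indicator; I claim
\[
\bn = \sum_{\Box\in\lambda} \Delta\bn(\Box)\cdot \chi_{U_\Box}.
\]
Indeed, evaluating the right-hand side at a box $(a,b)$ gives $\sum_{\Box\leq (a,b)}\Delta\bn(\Box)$, which telescopes to $\bn(a,b)$ by the definition \eqref{eq:derivative} of the derivative (using the convention $\bn(i,j)=0$ outside $\lambda$, \Cref{con:zero-outside}). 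Since every coefficient $\Delta\bn(\Box)$ is a nonnegative integer by hypothesis, this is a genuine factorisation in the sense of \Cref{def:factorisation}, and it is complete because each $U_\Box$ has the unique minimum $\Box$. This establishes the equivalence.

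\textbf{Main obstacle.} The only real content is the telescoping identity $\bn = \sum_{\Box}\Delta\bn(\Box)\,\chi_{U_\Box}$ and the verification that $U_\Box$ is connected with a unique minimum; both are elementary once one unwinds the inclusion–exclusion in \eqref{eq:derivative}, but care is needed with the boundary convention of \Cref{con:zero-outside} to make the telescoping sum collapse correctly. A secondary point worth a sentence is that this also reproves, en route, the uniqueness of the complete factorisation promised in the text: any complete factorisation must have $\Delta\bnu\geq 0$ for each indicator used, and chasing which indicators can contribute to a prescribed value of $\Delta\bn$ at a box pins down the coefficients as $\Delta\bn(\Box)$, so the factorisation just constructed is the only one.
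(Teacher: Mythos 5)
Your proof is correct and follows essentially the same route as the paper's: both directions rest on the additivity of $\Delta$, the observation that $\Delta\chi_U$ is the indicator of the unique minimum when $U$ has one, and the identity $\bn=\sum_{\Box}\Delta\bn(\Box)\,\chi_{U_\Box}$. The only (immaterial) difference is that you verify this identity by telescoping the sum directly, whereas the paper notes that both sides have the same derivative and that a Young filling is determined by its derivative.
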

\begin{proof}
Observe that, if $U\subseteq \lambda$ is an upper set with a unique minimum, then the derivative of its Young indicator has value 1 at the unique minimum, and 0 elsewhere. 
In particular, $\Delta \chi_U$ is a nonnegative Young filling.

If $\bn$ admits a complete factorisation, then, by additivity of $\Delta$, 
it follows that $\Delta \bn$ is also a nonnegative Young filling.

Conversely, assume that $\Delta \bn$ is nonnegative. 
For each $(i,j)\in \lambda$, denote by $U_{(i,j)}\subseteq \lambda$  the  nonempty connected upper set with $(i,j)$ as unique minimum. 
Then, we have
\begin{equation}\label{eq-complete}
\bn = \sum_{(i,j) \in \lambda} \Delta \bn(i,j)\cdot \chi_{U_{(i,j)}},
\end{equation}
and this implies the desired conclusion.
Indeed, again by additivity of $\Delta$, 
 both sides of \eqref{eq-complete} have the same derivative, and
this forces the two reverse plane partitions to coincide.
\end{proof}

\begin{corollary}\label{cor:uniquecomplete}
A reverse plane partition $\bn$ admits at most one  complete factorisation.
\end{corollary}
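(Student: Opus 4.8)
The plan is to deduce \Cref{cor:uniquecomplete} directly from the explicit formula \eqref{eq-complete} obtained in the proof of \Cref{prop-complete}. The key observation is that a complete factorisation, if it exists, is \emph{forced} by the derivative $\Delta\bn$: indeed, suppose $T = (n_{\bnu})_{\bnu \in \Ind(\lambda)}$ is any complete factorisation of $\bn$. By definition, the only indicators $\bnu$ with $n_{\bnu} \neq 0$ are of the form $\chi_U$ for an upper set $U$ with a unique minimum, and such $U$ is determined by its minimum: it must be the nonempty connected upper set $U_{(i,j)}$ generated by its minimal element $(i,j)$. Hence $T$ is entirely encoded by the function $\lambda \to \BN$ sending $(i,j)$ to the coefficient $m_{(i,j)} := n_{\chi_{U_{(i,j)}}}$ (with $m_{(i,j)} = 0$ if $\chi_{U_{(i,j)}}$ does not appear).

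First I would record, as already noted in the proof of \Cref{prop-complete}, that $\Delta \chi_{U_{(i,j)}}$ equals the indicator Young filling that is $1$ at $(i,j)$ and $0$ elsewhere; call it $e_{(i,j)}$. These fillings $\{e_{(i,j)}\}_{(i,j)\in\lambda}$ form the standard basis of the free abelian group $\BZ^\lambda$. Then, applying the additive operator $\Delta$ to the equation $\bn = \sum_{(i,j)\in\lambda} m_{(i,j)} \cdot \chi_{U_{(i,j)}}$ yields
\[
\Delta\bn = \sum_{(i,j)\in\lambda} m_{(i,j)}\cdot e_{(i,j)},
\]
so that $m_{(i,j)} = \Delta\bn(i,j)$ for every box $(i,j)$, by comparing coefficients in the basis $\{e_{(i,j)}\}$. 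This determines $T$ uniquely, proving the corollary. (As a byproduct this also re-confirms that the complete factorisation, when it exists, is precisely the one written in \eqref{eq-complete}.)

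There is essentially no obstacle here: the argument is a short linear-algebra uniqueness statement riding on the formula \eqref{eq-complete} and the additivity of $\Delta$ (recorded right after \Cref{def:derivative}). The only point requiring a line of care is the bijection, for upper sets with a unique minimum, between the upper set and its minimum --- i.e.\ that $U$ with unique minimum $(i,j)$ is necessarily $U_{(i,j)}$, the \emph{whole} principal up-set $\{(i',j') \in \lambda : (i',j') \geq (i,j)\}$; this is immediate from the definition of upper set, since any upper set containing $(i,j)$ contains all boxes above it, and if $(i,j)$ is the unique minimum there are no other boxes to add without creating a second minimal element. With that in hand, the coefficient-matching step is forced and the proof is complete in a few lines.
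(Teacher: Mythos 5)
Your argument is correct and is essentially identical to the paper's own proof: both write a complete factorisation as $\bn = \sum_{(i,j)\in\lambda} m_{i,j}\,\chi_{U_{(i,j)}}$ (using that an upper set with unique minimum is the principal up-set of that minimum) and then apply additivity of $\Delta$ to read off $m_{i,j} = \Delta\bn(i,j)$, so the factorisation is determined by $\Delta\bn$. The extra care you take in justifying the bijection between such upper sets and their minima is a welcome but minor elaboration of the same argument.
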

\begin{proof}
With notation as in Proposition \ref{prop-complete}, 
let 
$\bn = \sum_{(i,j) \in \lambda} m_{i,j} \chi_{U_{(i,j)}}$ be a complete factorisation.
Then, again by additivity of $\Delta$, it follows that $\Delta \bn (i,j)=m_{i,j}$ for all $(i,j) \in \lambda$.
Thus, a complete factorisation is uniquely determined by $\Delta \bn$.
\end{proof}

\begin{notation}
In the rest of the paper, for the sake of exposition, given a box $\Box=(i,j) \in \lambda$, we will denote by $\bn_{\Box} \in \BN$ or by $\bn_{i,j}$ the nonnegative integer $\bn(\Box) = \bn(i,j)$.
\end{notation}

%%%%%%%%%%%%%%%%%%%%%%%%%%%%%%%%%%%%%%%%%%%%%%%%%%%%%%%%%%%%%
%%%%%%%%%%%%%%%%%%%%%%%%%%%%%%%%%%%%%%%%%%%%%%%%%%%%%%%%%%%%%
\section{Components: dimension,  smoothness and classification}

%%%%%%%%%%%%%%%%%%%%%%%%%%%%%%%%%%%%%%%%%%%%%%%%%%%%%%%%%%%%% 
\subsection{The moduli space and its functor of points}

Let $X$ be a $\bfk$-variety. Fix a Young diagram $\lambda$ and a reverse plane partition $\bn = (\bn_{\Box})_{\Box \in \lambda}\in\BRPP^\lambda$ with shape $\lambda$. We now formally introduce the moduli space $X^{[\bn]}$ which is the object of study of this paper.

\begin{definition}\label{def:config}
Let $B$ be a $\bfk$-scheme. A $B$-\emph{family of configurations of $X$}, with underlying reverse plane partition $\bn\in \BRPP^\lambda$, is a $\lambda$-tuple $\CZ=(\CZ_{\Box} \into X \times B)_{\Box\in\lambda}$ of closed subschemes of $X \times B$, each flat and finite of relative length $\bn_{\Box}$ over $B$, and equipped with a closed immersion $\CZ_\Box \into \CZ_{\Box'}$ over $B$ for all $\Box,\Box'\in\lambda$ such that $\Box \leq \Box'$. In this context, we will say that $\CZ$ has \emph{shape} $\lambda$. 
\end{definition}

We often omit the closed immersions into $X \times B$ and, by a slight abuse of notation, we just write $(\CZ_{\Box})_{\Box \in \lambda}$ instead of $(\CZ_{\Box} \into X \times B)_{\Box\in\lambda}$.

A $B$-family of configurations of $X$ can be visualised as a diagram of nested subschemes
\begin{equation*}
  \begin{tikzcd}
    \CZ_{0,0}\arrow[r,hook]\arrow[d,hook] &\CZ_{1,0}\arrow[d,hook]\arrow[r,hook]&\CZ_{2,0}\arrow[d,hook]\arrow[r,hook]\arrow[d,hook]&\CZ_{3,0}\arrow[r,hook]\arrow[d,hook]&\cdots\\      \CZ_{0,1}\arrow[r,hook]\arrow[d,hook]&\CZ_{1,1}\arrow[d,hook]\arrow[r,hook]&\CZ_{2,1}\arrow[d,hook]\arrow[r,hook]&\CZ_{3,1}\arrow[d,hook]\arrow[r,hook]&\cdots\\
        \CZ_{0,2}\arrow[d,hook]\arrow[r,hook]&\CZ_{1,2}\arrow[d,hook]\arrow[r,hook]&\CZ_{2,2}\arrow[d,hook]\arrow[r,hook]& \rotatebox[origin=c]{-45}{$\cdots$} &\\
          \vdots&\vdots&\vdots& &\\
  \end{tikzcd}  
\end{equation*}
of $X \times B$, with the same shape as $\lambda$.

Consider the moduli functor 
\[
\mathsf{Hilb}^{\bn}(X) \colon \Sch_{\bfk}^{\opp} \to \Sets,
\]
defined by sending a $\bfk$-scheme $B$ to the set of $B$-families of configurations of $X$ with underlying reverse plane partition $\bn\in \BRPP^\lambda$. If $X$ is quasiprojective, then  the moduli functor $\mathsf{Hilb}^{\bn}(X)$ is represented by a quasiprojective $\bfk$-scheme, which we denote by $X^{[\bn]}$, see \cite[Prop. 2.4]{Mon_double_nested} (cf. also \cite{Ser_deformation} for the case of nested Hilbert schemes). We call $X^{[\bn]}$ the \textit{double nested Hilbert scheme of points on $X$}.

\begin{notation}
Given a configuration $Z$ of $X$ with reverse plane partition $\bn $, we will denote by $[Z]$ the corresponding point of $X^{[\bn]}$.
\end{notation}

In the remaining part of this section  we study the geometry of $X^{[\bn]}$ in the case where $X=C$ is a smooth curve (in particular, connected). In this situation, the $0$-dimensional subschemes of $C$ are precisely the effective $0$-cycles. 

%%%%%%%%%%%%%%%%%%%%%%%%%%%%%%%%%%%%%%%%%%%%%%%%%%%%%%%%%%%%% 
\subsection{Irreducible components, purity and connectedness of \texorpdfstring{$C^{[\bn]}$}{C\^n}}\label{sec: irreducible components}
Let $C$ be a smooth curve. The moduli space $ C^{[\bn]}$ is in general reducible and, thus, not smooth, as it was already observed by the third author in \cite{Mon_double_nested}. In this subsection, we establish a bijection between the irreducible components of $C^{[\bn]}$ and the set $\Fact(\bn)$ of factorisations of $\bn$. Moreover, we prove that $C^{[\bn]}$ is pure of dimension $\omega(\bn)$. The notion of \emph{weight} $\omega(-)$ was defined in \eqref{eqn:weight(n)} (cf.~also \Cref{prop:weight-formula}).

\begin{construction}\label{construction:linear-comb-cycles}
Fix a smooth curve $C$, a partition $\lambda$ and a $\bfk$-scheme $B$. Let  $\bn_1,\ldots,\bn_s\in\BRPP^\lambda$ be reverse plane partitions and let $\CZ_1,\ldots,\CZ_s\subset C \times B$ be $B$-flat families of $0$-dimensional subschemes  of $C$. Setting
\[ 
\left(\sum_{i=1}^s \bn_i\cdot \CZ_i\right)_{\Box}=\sum_{i=1}^s \bn_{i,\Box}\cdot \CZ_i
\]
defines a $B$-flat family of configurations of $C$, where the right hand side is defined thanks to functoriality of the sum of $0$-cycles map (see \cite[Paper 1, p.~40]{Rydh1} for the precise statement in a very general setup).
\end{construction}

\begin{definition}\label{def:free-conf}
Fix a smooth curve $C$, a partition $\lambda$, a reverse plane partition $\bn \in \BRPP^\lambda$ and a $\bfk$-scheme $B$. Let $\CZ = (\CZ_{\Box} \into C \times B)_{\Box \in \lambda}$ be a $B$-valued point of $C^{[\bn]}$. We say that $\CZ$ is \emph{free} if there exist (not necessarily distinct) indicators $\bnu_1,\ldots,\bnu_{\omega(\bn)}\in\Ind(\lambda)$ and pairwise disjoint sections $\sigma_1,\ldots,\sigma_{\omega(\bn) } \colon B \to C \times B$ of the projection $C \times B \to B$ such that 
\[
\CZ = \sum_{i=1}^{\omega(\bn)} \bnu_i \cdot \sigma_i(B).
\]
\end{definition}

If $B = \Spec \bfk$, \Cref{def:free-conf} reads simply as follows. A configuration $Z = (Z_{\Box} \into C)_{\Box \in \lambda}$ of a curve $C$ is \textit{free} if there exist (not necessarily distinct) indicators $\bnu_1,\ldots,\bnu_{\omega(\bn)}\in\Ind(\lambda)$ and distinct closed points $P_1,\ldots,P_{\omega(\bn)} \in C$ such that 
\begin{equation}
\label{eq:smooth}Z=\sum_{i=1}^{\omega(\bn)} \bnu_i\cdot P_i.
\end{equation}

\begin{remark}\label{rem:unique smooth}
Given any free configuration,  the  associated collection of  indicators $ \set{\bnu_i}_{i=1, \ldots,{\omega(\bn)}}$ is a factorisation of the reverse plane partition $\sum_{i=1}^{\omega(\bn)} \bnu_i$. Notice that, since indicators are irreducible objects in the monoid of reverse plane partitions of fixed length, any free configuration  can be written in the form \eqref{eq:smooth} in a unique way. 
\end{remark}

\begin{lemma}\label{lemma:smoothisopendense} 
Let $C$ be a smooth curve. Let $\lambda$ be a partition, and fix $\bn \in \BRPP^\lambda$. The locus $C^{[\bn]}_{\free} \subset C^{[\bn]}$ consisting of free configurations of $C$ is open and dense.
\end{lemma}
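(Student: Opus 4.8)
The plan is to prove openness and density separately, as these require rather different arguments.

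\textbf{Openness.} First I would note that being a free configuration is an open condition. The most transparent way to see this: consider the symmetric-product parametrisation. For each factorisation $T = (n_{\bnu})_{\bnu\in\Ind(\lambda)}$ of $\bn$, there is a natural morphism $\prod_{\bnu} C^{(n_{\bnu})} \to C^{[\bn]}$ sending a tuple of effective divisors $(D_{\bnu})$ to the configuration $\sum_{\bnu} \bnu \cdot D_{\bnu}$ (built via \Cref{construction:linear-comb-cycles}, applied to the universal families of points on the symmetric products). The open subset of $\prod_{\bnu} C^{(n_{\bnu})}$ where all the points involved are distinct maps isomorphically onto its image inside $C^{[\bn]}$ — this uses \Cref{rem:unique smooth}, which says a free configuration determines its indicators and supporting points uniquely, so the map is injective on that locus with inverse given by reading off the structure sheaf lengths box-by-box. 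Hence $C^{[\bn]}_{\free}$ is a finite union (over $\Fact(\bn)$, which is finite) of such images, each locally closed; I then need to check it is actually open. For this I would argue that $C^{[\bn]}_{\free}$ is stable under generisation within $C^{[\bn]}$ and use that $C^{[\bn]}$ is (locally) Noetherian, or more directly: a configuration near a free one is still free because the condition "$Z_\Box$ is reduced along its support and the support is a union of $\omega(\bn)$ distinct reduced points whose multiplicity pattern is an indicator" is visibly open (reducedness of a finite flat family is open on the base, and distinctness of finitely many sections is open).

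\textbf{Density.} This is the main obstacle and the heart of the lemma. I would reduce it to showing that every irreducible component of $C^{[\bn]}$ contains a free configuration — equivalently, that every point of $C^{[\bn]}$ is a limit of free ones. Here is where I would invoke the combinatorics: given an arbitrary configuration $Z = (Z_\Box)$, each $Z_\Box$ is an effective $0$-cycle on $C$, say $Z_\Box = \sum_P m_{\Box,P}\cdot P$. The nesting conditions say exactly that, for each fixed point $P$ in the combined support, the function $\Box \mapsto m_{\Box,P}$ is a reverse plane partition $\bn^{(P)} \in \BRPP^\lambda$, and $\bn = \sum_P \bn^{(P)}$. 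By \Cref{PropositionAtoms}, each $\bn^{(P)}$ factors as $\sum_{\bnu} n^{(P)}_{\bnu}\cdot\bnu$ into Young indicators. Now I would deform $Z$: split each point $P$ apart into $\sum_{\bnu} n^{(P)}_{\bnu}$ distinct points moving along $C$, carrying an indicator $\bnu$ each, and let them collide back to $P$. Concretely, pick a one-parameter family in the appropriate product of symmetric products $\prod_\bnu C^{(n_\bnu)}$ (with $n_\bnu = \sum_P n^{(P)}_{\bnu}$) whose general member has all points distinct and whose special member recovers $Z$; this exhibits $Z$ in the closure of the image of the free locus coming from the factorisation $(n_\bnu)$. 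Since $Z$ was arbitrary, $\overline{C^{[\bn]}_{\free}} = C^{[\bn]}$.

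\textbf{Remaining gap.} The one point requiring care is that the one-parameter family described above genuinely lands in $C^{[\bn]}$, i.e. that as the $\omega(\bn)$ points $\sigma_i(t)$ collide, the resulting cycles $\sum_i \bnu_i\cdot\sigma_i(t)$ still form a valid nested configuration flatly over the base — but this is immediate from \Cref{construction:linear-comb-cycles}, since that construction produces a $B$-flat family of configurations for \emph{any} choice of families $\CZ_i$ of points, in particular for $\CZ_i = \sigma_i(B)$ with $B$ a smooth curve or affine line; flatness and the nesting are automatic. Thus density follows, and combined with openness this proves the lemma. (One could alternatively note that density of $C^{[\bn]}_{\free}$ is essentially equivalent to — and will later be upgraded to — the statement of \Cref{thm:main-A} that components correspond to factorisations, but the argument above is self-contained and does not presuppose the component count.)
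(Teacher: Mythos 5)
Your proposal is correct and follows essentially the same route as the paper: decompose an arbitrary configuration point-by-point into Young indicators via \Cref{PropositionAtoms} (with \Cref{TheoremRPPHalfFactorial} guaranteeing the count is $\omega(\bn)$), obtain openness because the complement is the closed ``collision'' locus, and obtain density by splitting the coincident points apart in a one-parameter family built from \Cref{construction:linear-comb-cycles}. The paper's proof is just a terser version of the same argument, so there is nothing to add.
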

\begin{proof}
First, notice that, as a consequence of \Cref{PropositionAtoms} and \Cref{TheoremRPPHalfFactorial}, all configurations $Z$ with reverse plane partition $\bn$ can be written in the form 
\begin{equation} \label{eqn:arbitrary-Z-dec}
Z = \sum_{i=1}^{\omega(\bn)} \bnu_i\cdot  P_i,
\end{equation}
namely, in the form \eqref{eq:smooth}, but where the points $P_i$ are not necessarily distinct. The locus $C^{[\bn]}_{\free}$ is then open because its complement is defined by closed conditions. Namely, the `collision' of two points in \eqref{eq:smooth}. To confirm density, observe that any such $Z$ is a limit of configurations of the form $\sum_{i=1}^{\omega(\bn)} \bnu_i\cdot  P_{i,t} $, where the points $P_{i,t}$ are pairwise distinct for $t\neq 0$.
\end{proof}

We now move towards establishing a bijective correspondence between the set of irreducible components of $C^{[\bn]}$ and the set of factorisations of $\bn$ (this is \Cref{thm:irr components}). We provide \Cref{ex:quadrato} in order to anticipate how the correspondence works.

\begin{example}\label{ex:quadrato}
    It was shown in \cite[Ex. 2.6]{Mon_double_nested} that the double nested Hilbert scheme $C^{[\bn]}$ defined by the reverse plane partition
    \[
\begin{tikzpicture}
    \node at (0,0) {\young(01,12)};
    \node at (-0.85,-0.03) {$\bn=$};
\end{tikzpicture}
    \]
has two irreducible components $V_1$ and $V_2$, both of dimension 2. Two general points $[Z_i]\in V_i$, for $i=1,2$, correspond to the free configurations
    \[
Z_1=  \begin{tikzcd}[column sep =tiny,row sep =tiny]
    \varnothing\arrow[r, phantom, "\subset"]\arrow[d, phantom, "\cap"] & P\arrow[d, phantom, "\cap"]\\
      P\arrow[r, phantom, "\subset"]  &P+Q 
  \end{tikzcd}
  \qquad
  \text{and}
  \qquad
  Z_2=\begin{tikzcd}[column sep =tiny,row sep =tiny]
    \varnothing\arrow[r, phantom, "\subset"]\arrow[d, phantom, "\cap"] &Q\arrow[d, phantom, "\cap"]\\
      P\arrow[r, phantom, "\subset"]  &P+Q 
  \end{tikzcd}  
    \]
where $P,Q\in C$ are distinct closed points. Now, $\bn$ admits two possible factorisations
\begin{center}
    \begin{tikzpicture}
    \node at (-1.1,0) {$\bn$};
    \node at (0,0) {\young(01,11)};
    \node at (-0.7,-0.03) {$= $};
    \node at (1.55,0) {\young(00,01)};
    \node at (0.8,0) {$+ $};
    
    \node at (0,-1.2) {\young(01,01)};
    \node at (-0.75,-1.23) {$= $};
    \node at (1.55,-1.2) {\young(00,11)};
    \node at (0.8,-1.2) {$+ $};
    \end{tikzpicture}
\end{center}
and the formal expressions
\begin{center}
    \begin{tikzpicture}
    \node at (-1.1,0) {$Z_1$};
    \node at (0,0) {\young(01,11)};
    \node at (-0.7,-0.03) {$= $};
    \node at (1.7,0) {\young(00,01)};
    \node at (0.8,0) {$ \cdot P\ + $};
    \node at (2.35,0) {$ \cdot Q $};
    \end{tikzpicture}\\
    \begin{tikzpicture}
    \node at (-1.1,0) {$Z_2$};
    \node at (0,0) {\young(01,01)};
    \node at (-0.7,-0.03) {$= $};
    \node at (1.7,0) {\young(00,11)};
    \node at (0.8,0) {$ \cdot Q\ + $};
    \node at (2.35,0) {$ \cdot P $};
    \end{tikzpicture}
\end{center}
give a canonical way to associate bijectively a factorisation of $\bn $ to each irreducible component of $C^{[\bn]}$. 
\end{example}

\begin{prop}\label{prop:conn-comp-free}
Let $C$ be a smooth curve, $\lambda$ a partition. Fix $\bn \in \BRPP^\lambda$ and a factorisation $T=(n_{\bnu})_{\bnu \in \Ind(\lambda)}$ of $\bn$. Consider the open subscheme
\[
\begin{tikzcd}
C_T\arrow[hook]{r} &  \displaystyle\prod_{\bnu\in\Ind(\lambda)}C^{(n_{\bnu})},
\end{tikzcd}
\]
consisting of tuples $(D_{\bnu})_{\bnu \in \Ind(\lambda)}$ of reduced divisors $D_{\bnu} \subset C$ with pairwise disjoint supports. We have an isomorphism of schemes
\[
\begin{tikzcd}
C^{[\bn]}_{\free}\arrow{r}{\sim} &\displaystyle\coprod_{T\in \Fact(\bn)} C_T.
\end{tikzcd}
\]
In particular, $C^{[\bn]}_{\free}$ is smooth, with connected components indexed by the factorisations of $\bn$.
\end{prop}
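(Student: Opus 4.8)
The plan is to exhibit the isomorphism $C^{[\bn]}_{\free} \simto \coprod_{T \in \Fact(\bn)} C_T$ by constructing mutually inverse morphisms of schemes and then reading off the stated corollaries. First I would address the set-theoretic picture: by \Cref{lemma:smoothisopendense} and \Cref{rem:unique smooth}, every free configuration $Z = \sum_{i=1}^{\omega(\bn)} \bnu_i \cdot P_i$ determines, via the multiset of indicators $\{\bnu_i\}$, a well-defined factorisation $T = (n_{\bnu})_{\bnu \in \Ind(\lambda)}$ of $\bn$ (where $n_{\bnu}$ counts how many $i$ have $\bnu_i = \bnu$), and conversely, once $T$ is fixed, grouping the distinct points $P_i$ according to which indicator they carry gives a tuple of reduced divisors $(D_{\bnu})_{\bnu}$ with $D_{\bnu}$ of degree $n_{\bnu}$ and pairwise disjoint supports — i.e. a point of $C_T$. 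This bijection is the one illustrated in \Cref{ex:quadrato}. The real content is to upgrade it to an isomorphism of schemes.

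The cleanest way to do this is functorially. Fix a factorisation $T$. I would construct a morphism $C_T \to C^{[\bn]}_{\free}$ by describing the universal family: over $\prod_{\bnu} C^{(n_{\bnu})}$ there is, for each $\bnu$, the universal divisor $\mathcal D_{\bnu} \subset C \times \prod_{\bnu'} C^{(n_{\bnu'})}$, flat of relative degree $n_{\bnu}$; restricting to the open locus $C_T$ where the supports are disjoint and reduced, \Cref{construction:linear-comb-cycles} produces the $C_T$-family of configurations $\sum_{\bnu} \bnu \cdot \mathcal D_{\bnu}$, whose underlying reverse plane partition is $\sum_{\bnu} n_{\bnu} \cdot \bnu = \bn$ and which is free by construction. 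By the representability of $\mathsf{Hilb}^{\bn}(C)$ this family is classified by a morphism $C_T \to C^{[\bn]}$, which factors through $C^{[\bn]}_{\free}$. Taking the coproduct over $T \in \Fact(\bn)$ yields a morphism $\Phi \colon \coprod_T C_T \to C^{[\bn]}_{\free}$. For the inverse I would argue that, given any $B$-family of free configurations $\mathcal Z$, the locally constant function on $B$ sending a point to the factorisation it determines is in fact constant on connected components, so $B$ decomposes as $\coprod_T B_T$; on each $B_T$ the family splits, again by uniqueness of the decomposition into indicators and by the fact that the closed subscheme of $C \times B_T$ carrying a fixed indicator $\bnu$ is flat and finite of degree $n_{\bnu}$ over $B_T$ (it can be cut out as the support of the appropriate graded piece, or recovered by the $0$-cycle functoriality of \cite{Rydh1}), hence classified by a map $B_T \to C^{(n_{\bnu})}$ landing in $C_T$. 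This produces the inverse morphism and shows the two are mutually inverse by the uniqueness in \Cref{rem:unique smooth}.

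The final assertions are then immediate: each $C_T$ is an open subscheme of a product $\prod_{\bnu} C^{(n_{\bnu})}$ of symmetric products of a smooth curve, hence smooth (the symmetric product of a smooth curve is smooth, being locally a quotient of $C^n$ by $\mathfrak S_n$ acting with the standard, linearisable structure — equivalently $C^{(n)} \cong \Hilb^n C$), so $C^{[\bn]}_{\free}$ is smooth; and its connected components are exactly the $C_T$, which are nonempty and connected (a product of connected varieties intersected with the dense open "distinct supports" locus is connected), indexed by $\Fact(\bn)$.

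I expect the main obstacle to be the rigorous treatment of the splitting of a $B$-family over a connected base, i.e. showing that a family of free configurations genuinely decomposes as a sum $\sum_{\bnu} \bnu \cdot \mathcal D_{\bnu}$ of honest flat families of divisors rather than merely pointwise. This requires knowing that the "disjoint supports" condition is open (clear) and that under it one can canonically extract the divisor carrying each indicator in families — which is where one leans on the functoriality of addition of relative $0$-cycles from \cite{Rydh1} and on the fact that a finite flat family that is pointwise a disjoint union of the prescribed shape is globally so. Everything else is bookkeeping with \Cref{PropositionAtoms}, \Cref{TheoremRPPHalfFactorial}, and \Cref{rem:unique smooth}.
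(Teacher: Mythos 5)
Your proposal is correct and follows essentially the same route as the paper: the paper also compares the moduli functors, first exhibiting the bijection on closed points via the unique decomposition of a free configuration into indicators (\Cref{rem:unique smooth}) and then asserting that the argument carries over to flat families. In fact you are more explicit than the paper's own (rather terse) proof about the only delicate step, namely splitting a $B$-family of free configurations into flat families of divisors indexed by the indicators; your appeal to \Cref{construction:linear-comb-cycles} and the $0$-cycle functoriality of \cite{Rydh1} is exactly the right way to justify the paper's phrase ``the same argument naturally carries over to flat families''.
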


\begin{proof}
We prove the isomorphism by comparing the associated moduli functors. We first prove that there is a bijection at the level of closed points. Let $Z$ be a closed point of $C^{[\bn]}_{\free}$. Then $Z$ is a free configuration and, following \Cref{rem:unique smooth}, it can be written uniquely as 
\[Z=\sum_{i=1}^{\omega(\bn)} \bnu_i\cdot P_i,\]
for some distinct closed points $\{P_i\}_i$, with associated factorisation $T$. Given an indicator $\bnu$,  denote by $n_{\bnu}$ the multiplicity of the indicator $\bnu$ in the factorisation $T$, and by $P_{\bnu,1},\dots, P_{\bnu, n_{\bnu}}$ the distinct point in $Z$ with coefficient $\bnu$. To $Z$, we associate the closed point
\[
p_Z = \left(\sum_{j=1}^{n_{\bnu}}P_{\bnu, j} \right)_{\bnu\in \Ind(\lambda)}\in C_T.
\]
It is clear that the association $Z \mapsto p_Z$ just defined gives a bijection between the closed points of $C^{[\bn]}_{\free}$ and the closed points of $\coprod_{T\in \Fact(\bn)} C_T$. The same argument naturally carries over to flat families, therefore, we obtain the sought after isomorphism.
\end{proof}

\begin{corollary}\label{cor:gen-red}
Let $C$ be a smooth curve. Let $\lambda$ be a partition, and fix $\bn \in \BRPP^\lambda$.
The scheme $C^{[\bn]}$ is generically reduced.
\end{corollary}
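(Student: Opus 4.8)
The statement to prove is Corollary~\ref{cor:gen-red}: $C^{[\bn]}$ is generically reduced.

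\textbf{Plan.} The idea is to reduce the problem to a statement about the open dense locus $C^{[\bn]}_{\free}$, which has already been analyzed. Since a scheme is generically reduced precisely when it is reduced at the generic point of each of its irreducible components, and since $C^{[\bn]}_{\free}$ is open and dense (by \Cref{lemma:smoothisopendense}), it follows that $C^{[\bn]}_{\free}$ meets every irreducible component of $C^{[\bn]}$ in a nonempty open subset. Hence it suffices to show that $C^{[\bn]}_{\free}$ is reduced — indeed, it suffices to show it is smooth, which is stronger. But this is exactly the content of \Cref{prop:conn-comp-free}: there is an isomorphism $C^{[\bn]}_{\free} \simeq \coprod_{T \in \Fact(\bn)} C_T$, where each $C_T$ is an open subscheme of a product of symmetric products $\prod_{\bnu} C^{(n_{\bnu})}$. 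Since $C$ is a smooth curve, each symmetric product $C^{(m)}$ is a smooth variety, so the product is smooth, and an open subscheme of a smooth scheme is smooth; a disjoint union of smooth schemes is smooth. Therefore $C^{[\bn]}_{\free}$ is smooth, in particular reduced.

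\textbf{Key steps in order.} First, invoke \Cref{lemma:smoothisopendense} to obtain that $C^{[\bn]}_{\free}$ is open and dense in $C^{[\bn]}$; density guarantees that it contains the generic point of every irreducible component. Second, invoke \Cref{prop:conn-comp-free} to identify $C^{[\bn]}_{\free}$ with a disjoint union of open subschemes of products of symmetric products of $C$. Third, recall that the symmetric product of a smooth curve is smooth (this is classical — $C^{(m)}$ is smooth of dimension $m$ when $\dim C = 1$), conclude that $C^{[\bn]}_{\free}$ is smooth and hence reduced. Fourth, conclude: a scheme whose generic points (of irreducible components) all lie in a reduced open subscheme is generically reduced, since reducedness is a local property and the local ring at such a generic point coincides with the local ring computed in the open subscheme.

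\textbf{Main obstacle.} There is essentially no obstacle here: this corollary is a direct consequence of the two preceding results, and the only external input — smoothness of symmetric products of smooth curves — is standard. The one point requiring a line of care is the final logical step, namely that ``reduced on a dense open'' together with ``that open contains all generic points of components'' yields ``generically reduced'': this holds because the stalk of the structure sheaf at a generic point $\eta$ of a component is the same whether computed in $C^{[\bn]}$ or in the open neighborhood $C^{[\bn]}_{\free}$ of $\eta$, and in the latter that stalk is reduced since $C^{[\bn]}_{\free}$ is reduced. In practice this is a one-sentence remark. Thus the proof is short: combine \Cref{lemma:smoothisopendense} and \Cref{prop:conn-comp-free} with the smoothness of symmetric products of curves.
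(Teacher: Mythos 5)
Your argument is correct and is essentially identical to the paper's proof, which likewise observes that $C^{[\bn]}_{\free}$ is a dense open subset (by \Cref{lemma:smoothisopendense}) that is smooth, hence reduced, by \Cref{prop:conn-comp-free}. The extra details you supply (smoothness of symmetric products of a smooth curve, and the identification of stalks at generic points) are correct and merely make explicit what the paper leaves implicit.
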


\begin{proof}
It is enough to notice that $C^{[\bn]}$ contains the dense open subset $C^{[\bn]}_{\free}$, which is smooth by \Cref{prop:conn-comp-free}, and, therefore, reduced.
\end{proof}

The conclusion of \Cref{ex:quadrato} is generalised in the following result.

\begin{theorem}\label{thm:irr components} Let $C$ be a smooth curve. Let $\lambda$ be a partition, and fix $\bn \in \BRPP^\lambda$. Then
\begin{itemize}
    \item [\mylabel{irr-cpt-1}{\normalfont{(1)}}] there is a bijection
    \[
    \begin{tikzcd}
    \Set{
    \mbox{\normalfont{irreducible components} }V \subset C^{[\bn]}
    }
    \arrow[r,"\sim"] &\Fact(\bn),
    \end{tikzcd} 
    \]
    \item [\mylabel{irr-cpt-2}{\normalfont{(2)}}] all the irreducible components of $C^{[\bn]}$ have dimension equal to $\omega(\bn)$, i.e., $C^{[\bn]}$ is pure with
    \[
    \dim C^{[\bn]} = \omega(\bn).
    \]
\end{itemize}
\end{theorem}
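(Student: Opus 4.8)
The plan is to leverage the description of the free locus from \Cref{prop:conn-comp-free} together with a density/closure argument. First I would note that, by \Cref{lemma:smoothisopendense}, the free locus $C^{[\bn]}_{\free}$ is open and dense in $C^{[\bn]}$; hence every irreducible component $V$ of $C^{[\bn]}$ is the closure of a unique connected component of $C^{[\bn]}_{\free}$. By \Cref{prop:conn-comp-free}, the connected components of $C^{[\bn]}_{\free}$ are precisely the open subschemes $C_T \subset \prod_{\bnu \in \Ind(\lambda)} C^{(n_{\bnu})}$, one for each factorisation $T = (n_{\bnu})_{\bnu} \in \Fact(\bn)$. This immediately yields a bijection between irreducible components of $C^{[\bn]}$ and $\Fact(\bn)$, proving \ref{irr-cpt-1}: the component associated to $T$ is $V_T := \overline{C_T}$, and distinct factorisations give distinct (disjoint in the free locus, hence distinct) components.

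For \ref{irr-cpt-2}, I would compute $\dim V_T = \dim C_T$. Since $C_T$ is a dense open subscheme of the irreducible variety $\prod_{\bnu \in \Ind(\lambda)} C^{(n_{\bnu})}$ (each symmetric product $C^{(n_{\bnu})}$ is irreducible of dimension $n_{\bnu}$, as $C$ is a smooth curve), we get
\[
\dim V_T = \sum_{\bnu \in \Ind(\lambda)} \dim C^{(n_{\bnu})} = \sum_{\bnu \in \Ind(\lambda)} n_{\bnu},
\]
which is exactly the \emph{length} of the factorisation $T$. By \Cref{TheoremRPPHalfFactorial}, every factorisation of $\bn$ has length equal to the weight $\omega(\bn)$. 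Therefore $\dim V_T = \omega(\bn)$ for every $T$, and since every irreducible component is of this form, $C^{[\bn]}$ is pure of dimension $\omega(\bn)$.

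The one point requiring a small amount of care is the claim that every irreducible component of $C^{[\bn]}$ meets the free locus, i.e. that no component is entirely contained in the complement $C^{[\bn]} \smallsetminus C^{[\bn]}_{\free}$; this is exactly the density statement of \Cref{lemma:smoothisopendense}, so it is already in hand. A second, equally minor, point is that $C_T$ is genuinely nonempty for each $T \in \Fact(\bn)$ — this follows because $C$ has infinitely many closed points (it is a curve over an algebraically closed field), so one can always choose pairwise-disjoint reduced divisors of the prescribed degrees. I do not anticipate a serious obstacle here: the essential work has been done in \Cref{prop:conn-comp-free} and \Cref{TheoremRPPHalfFactorial}, and this theorem is essentially a repackaging of those two results via the topological fact that the irreducible components of a scheme are the closures of the connected components of any dense open subscheme that happens to be a disjoint union of irreducible varieties.
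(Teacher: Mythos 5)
Your argument is correct and follows essentially the same route as the paper: both identify the irreducible components with the closures of the connected components of the dense open free locus via \Cref{lemma:smoothisopendense} and \Cref{prop:conn-comp-free}, and both compute the dimension as the length of the factorisation, invoking \Cref{TheoremRPPHalfFactorial} to conclude purity. The two extra points of care you flag (nonemptiness of $C_T$ and components meeting the free locus) are handled correctly and are implicit in the paper's proof.
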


\begin{proof}
The irreducible components of $C^{[\bn]}$ are in bijection with the irreducible components of any dense open subset, for instance $C^{[\bn]}_{\free}$ (cf.~\Cref{lemma:smoothisopendense}). But the irreducible components of $C^{[\bn]}_{\free}$ are precisely the closures of its connected components. Thus \ref{irr-cpt-1} is proved. 

As for \ref{irr-cpt-2}, we observe that for every $T\in\Fact(\bn)$, one has
\[
\dim \overline{C_T} = \dim C_T = \sum_{\bnu \in \Ind(\lambda)}n_{\bnu} = \omega(\bn),
\]
where the last equality is the content of \Cref{TheoremRPPHalfFactorial}.
\end{proof}

See \Cref{prop: dimension} for a direct proof that $\dim C^{[\bn]} = \omega(\bn)$, including an explicit example (cf.~\Cref{example:dimension}), showing  the algorithmic nature of this alternative proof.

\begin{corollary}\label{cor:conn}
Let $C$ be a smooth curve. Let $\lambda$ be a partition, and fix $\bn \in \BRPP^\lambda$. Then the intersection of  all irreducible components of $C^{[\bn]}$ contains a copy of $C$. In particular, $C^{[\bn]}$ is connected.
\end{corollary}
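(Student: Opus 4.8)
The statement to prove is \Cref{cor:conn}: the intersection of all irreducible components of $C^{[\bn]}$ contains a copy of $C$, hence $C^{[\bn]}$ is connected. My plan is to exhibit, canonically, a closed subscheme of $C^{[\bn]}$ isomorphic to $C$ which visibly lies inside \emph{every} component. The natural candidate is the locus of \emph{fully collided} configurations: those of the form $\bn \cdot P$, i.e. $Z_\Box = \bn_\Box \cdot P$ (the length-$\bn_\Box$ subscheme supported entirely at a single point $P \in C$), as $P$ ranges over $C$. Setting up this family scheme-theoretically uses \Cref{construction:linear-comb-cycles}: take $B = C$, the single reverse plane partition $\bn$, and $\CZ_1 = \Delta_C \subset C \times C$ the diagonal, giving the $C$-family $(\bn_\Box \cdot \Delta_C)_{\Box \in \lambda}$. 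This is a $C$-flat family of configurations of $C$ with underlying reverse plane partition $\bn$, hence it is classified by a morphism $j \colon C \to C^{[\bn]}$; one checks $j$ is a closed immersion (e.g. because composing with any $Z_\Box$ with $\bn_\Box > 0$ recovers, up to the finite map $C^{(n)} \to \operatorname{Sym}$ in a single point, the original $C$, so $j$ is injective with injective differential, being a section-type map).

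Next I would show that the image of $j$ lies on every irreducible component $V \subset C^{[\bn]}$. By \Cref{thm:irr components}\,\ref{irr-cpt-1}, each $V$ is the closure $\overline{C_T}$ of a stratum $C_T$ for some factorisation $T = (n_\bnu)_{\bnu \in \Ind(\lambda)}$ of $\bn$. Fix such a $T$, with $\omega(\bn) = \sum_\bnu n_\bnu$. For a point $P \in C$, pick distinct points $P_1^{(t)}, \dots, P_{\omega(\bn)}^{(t)} \in C$ depending on a parameter $t$ in a curve germ, all specialising to $P$ as $t \to 0$, and with the combinatorial labels assigned so that exactly $n_\bnu$ of them carry the indicator $\bnu$. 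The resulting free configurations $\sum_i \bnu_i \cdot P_i^{(t)}$ lie in $C_T$ (this is the description of $C_T$ via \Cref{prop:conn-comp-free}), and their flat limit as $t \to 0$ is precisely $\sum_i \bnu_i \cdot P = \bigl(\sum_\bnu n_\bnu \bnu\bigr)\cdot P = \bn \cdot P = j(P)$, using additivity of indicators and the fact that $\sum_\bnu n_\bnu \bnu = \bn$. Hence $j(P) \in \overline{C_T} = V$. Since $P$ and $V$ were arbitrary, $j(C) \subseteq \bigcap_V V$.

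Finally, connectedness is immediate: $C^{[\bn]}$ is the union of its finitely many irreducible components (it is quasiprojective, in particular Noetherian), each of which is connected (being irreducible), and they all share the nonempty common subset $j(C)$; a union of connected sets with a common point is connected.

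\textbf{Main obstacle.} The only genuinely non-formal point is the flat-limit computation: one must know that the flat limit in $C^{[\bn]}$ of the free configurations $\sum_i \bnu_i \cdot P_i^{(t)}$ as the $P_i^{(t)} \to P$ is the collided configuration $\bn \cdot P$, rather than some other subscheme. This is where the curve hypothesis is essential --- for a smooth curve, a flat family of $0$-cycles of degree $n$ over a DVR is the same as a family of effective divisors, and the limit of $\sum_i P_i^{(t)}$ is literally $\sum_i P$ by continuity of the cycle map $C^{(n)} \to C^{(n)}$ (indeed $C^{(n)}$ \emph{is} the Hilbert scheme $C^{[n]}$); applying this compatibly in every box $\Box$ via \Cref{construction:linear-comb-cycles} gives the claim. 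Everything else --- representability-of-the-family, closedness of $j$, and the topological connectedness conclusion --- is routine.
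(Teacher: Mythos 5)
Your proposal is correct and follows essentially the same route as the paper: the paper's proof likewise observes that each component $\overline{C_T}$ contains the copy of $C$ given by fully punctual configurations, obtained by colliding the supporting points of a general free configuration in $C_T$ to a single point. Your write-up simply makes explicit the classifying morphism $j$ and the flat-limit computation (which is indeed justified on a curve since $C^{(n)}=C^{[n]}$ and the nesting conditions are closed).
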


\begin{proof}
The locus $\overline{C_T}$ contains a copy of $C$ for every $T \in \Fact(\bn)$, where the closure is taken in $C^{[\bn]}$.
Indeed, a general point of $\overline{C_T}$ can be degenerated to a fully punctual double nested scheme (i.e.~one where all the divisors $Z_{\Box}\into C$ are entirely supported  at the same closed point of $C$) by making the supporting points collide together to a fixed one.
\end{proof}

%%%%%%%%%%%%%%%%%%%%%%%%%%%%%%%%%%%%%%%%%%%%%%%%%%%%%%%%%%%%% 
\subsection{Singularities of the irreducible components}\label{sec: smoothness}
Since $C^{[\bn]}$ is always connected (cf.~\Cref{cor:conn} or \Cref{prop:connected}), it is singular as soon as it is reducible, i.e., as soon as $\lvert \Fact(\bn)\rvert > 1$. One may still ask whether the irreducible components of $C^{[\bn]}$ are smooth. We conclude this subsection with a negative answer to this question, and provide combinatorial criteria for smoothness.

\begin{prop}\label{prop:canonical cover}
Let $C$ be a smooth curve. Let $\lambda$ be a partition, and fix $\bn \in \BRPP^\lambda$. Let $V\subset C^{[\bn]}$ be an irreducible component, with associated factorisation $(n_{\bnu})_{\bnu \in \Ind(\lambda)} \in \Fact(\bn)$. There is a finite birational morphism 
\[
\begin{tikzcd}
\displaystyle\prod_{\bnu \in \Ind(\lambda)} C^{(n_{\bnu})} \arrow{r}{\varphi_V} & V,
\end{tikzcd}
\]
which coincides with the normalisation morphism.
\end{prop}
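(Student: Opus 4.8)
The plan is to construct the morphism $\varphi_V$ explicitly from the universal family of divisors on symmetric products, and then prove that it is finite, birational, and has normal source, which forces it to be the normalisation. First I would recall that $V = \overline{C_T}$ for the factorisation $T = (n_{\bnu})_{\bnu \in \Ind(\lambda)}$ corresponding to $V$ under \Cref{thm:irr components}\,\ref{irr-cpt-1}, and that $\prod_{\bnu} C^{(n_{\bnu})}$ is smooth (being a product of symmetric products of a smooth curve), hence in particular normal. On the open locus $C_T \subset \prod_{\bnu} C^{(n_{\bnu})}$ of tuples with pairwise disjoint supports, \Cref{prop:conn-comp-free} already gives an isomorphism onto $C^{[\bn]}_{\free} \cap V$, so the map is defined there; the task is to extend it over all collision loci and verify the claimed properties globally.

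The key construction: over $\prod_{\bnu} C^{(n_{\bnu})}$ there is, for each indicator $\bnu$, the universal effective divisor $\mathcal{D}_{\bnu} \subset C \times \prod_{\bnu'} C^{(n_{\bnu'})}$, flat and finite of degree $n_{\bnu}$ over the base. Using \Cref{construction:linear-comb-cycles}, the assignment
\[
\left(\sum_{\bnu \in \Ind(\lambda)} \bnu \cdot \mathcal{D}_{\bnu}\right)_{\Box} \,=\, \sum_{\bnu \in \Ind(\lambda)} \bnu_{\Box}\cdot \mathcal{D}_{\bnu}
\]
defines a family of configurations of $C$ with underlying reverse plane partition $\sum_{\bnu} n_{\bnu}\cdot \bnu = \bn$, i.e.\ a morphism $\prod_{\bnu} C^{(n_{\bnu})} \to C^{[\bn]}$. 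By \Cref{lemma:smoothisopendense} every configuration in the form \eqref{eqn:arbitrary-Z-dec} lies in the image of such a map for an appropriate $T$, and on the dense open $C_T$ the map agrees with the isomorphism of \Cref{prop:conn-comp-free}; hence the image of $\prod_{\bnu} C^{(n_{\bnu})}$ is the closure $\overline{C_T} = V$, giving the desired surjection $\varphi_V$. Finiteness follows because $\varphi_V$ is projective (source and target are projective over $\bfk$, or quasiprojective with $\varphi_V$ proper) with finite fibres: a fibre over $[Z] \in V$ consists of the ways of grouping the points in a decomposition $Z = \sum \bnu_i \cdot P_i$ into the divisors $D_{\bnu}$, and by \Cref{rem:unique smooth} this is unique whenever the $P_i$ are distinct, and in general finite (bounded by the number of ways to partition the finite support). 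Birationality is immediate from the isomorphism over $C_T$, which is dense in both source and target.

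Finally, since $\varphi_V \colon \prod_{\bnu} C^{(n_{\bnu})} \to V$ is a finite birational morphism whose source is a normal (indeed smooth) variety, the universal property of normalisation identifies $\varphi_V$ with the normalisation of $V$: $V$ is integral (being an irreducible component with its reduced structure), $\prod_{\bnu} C^{(n_{\bnu})}$ is normal, and a finite birational morphism from a normal variety to an integral variety factors through, and is therefore equal to, the normalisation. I expect the main obstacle to be the careful verification that the map $\prod_{\bnu} C^{(n_{\bnu})} \to C^{[\bn]}$ is a genuine morphism of schemes defined over arbitrary bases — i.e.\ that \Cref{construction:linear-comb-cycles} applied to the universal divisors yields a flat family of configurations satisfying the nesting conditions in \Cref{def:config} — and that its scheme-theoretic image is exactly $V$ (as opposed to some thickening); this requires knowing that $V$ is reduced, which one gets from \Cref{cor:gen-red} together with the fact that $V$, as an irreducible component, is taken with its reduced structure by our conventions, or alternatively by noting the map is dominant onto the reduced scheme $\overline{C_T}$ and factors set-theoretically, then invoking that a finite map onto a variety from a variety whose restriction to a dense open is an isomorphism is surjective with the expected image.
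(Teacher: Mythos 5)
Your construction of $\varphi_V$ via \Cref{construction:linear-comb-cycles} applied to the universal divisors, the identification of its image with $V$ through the open immersion over $C_T$, the resulting birationality, and the final identification with the normalisation (a finite birational morphism from a normal source onto an integral target must be the normalisation) all coincide with the paper's argument. The one genuine gap is in the finiteness step. You justify finiteness by saying $\varphi_V$ is ``projective (source and target are projective over $\bfk$, or quasiprojective with $\varphi_V$ proper) with finite fibres''. But $C$ is only assumed quasiprojective, so neither $\prod_{\bnu} C^{(n_{\bnu})}$ nor $V$ is proper over $\bfk$ in general, and the parenthetical ``quasiprojective with $\varphi_V$ proper'' assumes precisely the point at issue: properness of $\varphi_V$ is the nontrivial claim. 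Quasi-finiteness alone does not imply finiteness --- an open immersion such as $\BA^1\smallsetminus\{0\} \into \BA^1$ has finite fibres and is birational but is not finite --- so without an argument for properness your finiteness claim is unsupported.

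The paper closes this gap by choosing a smooth compactification $\overline{C}$ of $C$, letting $\overline{V}\subset\overline{C}^{[\bn]}$ be the component corresponding to the same factorisation, and observing that the square comparing $\varphi_V$ with $\varphi_{\overline{V}}$ is \emph{cartesian}. In the projective case properness of $\varphi_{\overline{V}}$ is automatic (both source and target are proper over $\bfk$) and quasi-finiteness is checked on fibres exactly as you describe; finiteness of $\varphi_V$ then follows since finiteness is stable under base change. If you insert this compactification argument (or any other proof that $\varphi_V$ is proper, e.g.\ a direct verification via the valuative criterion using that nested $0$-cycles on $C$ extend uniquely once their supports do), the rest of your proof goes through as written.
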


\begin{proof}
We have a morphism of schemes
\[
\begin{tikzcd}
\displaystyle\prod_{\bnu \in \Ind(\lambda)} C^{(n_{\bnu})}\arrow[r,"\varphi_V"]& C^{[\bn]}
\end{tikzcd}
\]
arising directly from \Cref{construction:linear-comb-cycles}. On points, it is given by 
\[
\begin{tikzcd}
\left(D_{\bnu}\right)_{\bnu \in \Ind(\lambda)} \arrow[mapsto]{r} & \displaystyle\sum_{\bnu \in \Ind(\lambda)} \bnu \cdot D_{\bnu} =  
\left(\displaystyle\sum_{\bnu \in \Ind(\lambda)}\bnu_{\Box} \cdot D_{\bnu}\right)_{\Box \in \lambda}.
\end{tikzcd}
\]
The image of $\varphi_V$ is irreducible (since its source is), and its restriction to the open subscheme $C_T$ of the source is an open immersion (cf.~\Cref{lemma:smoothisopendense} and \Cref{prop:conn-comp-free}). Thus, the image of $\varphi_V$ is an irreducible component of $C^{[\bn]}$, necessarily equal to $V$. This argument proves the existence of the morphism
\[
\begin{tikzcd}
\displaystyle\prod_{\bnu \in \Ind(\lambda)} C^{(n_{\bnu})}\arrow[r,"\varphi_V"]& V
\end{tikzcd}
\]
and its birationality at once.

To show that $\varphi_V$ is finite (proper and quasifinite), we argue as follows. Let $\overline C$ be a smooth compactification of $C$. The chosen factorisation corresponds, by \Cref{thm:irr components}, to an irreducible component $\overline{V} \subset \overline C^{[\bn]}$. Now, we have a commutative diagram 
\[
\begin{tikzcd}[row sep=large,column sep=large]
\displaystyle\prod_{\bnu \in \Ind(\lambda)} C^{(n_{\bnu})}\arrow[r,"\varphi_{V}"]\arrow[d,hook]&V\arrow[d,hook]\\
\displaystyle\prod_{\bnu \in \Ind(\lambda)} \overline{C}^{(n_{\bnu})}\arrow[r,"\varphi_{\overline{V}}"]&\overline{V}
\end{tikzcd}
\]
which is \emph{cartesian}, thus, since finiteness is stable under base change, it is enough to prove it in the projective case. In this case, properness is automatic (since the source and target of $\varphi_{\overline{V}}$ are both proper over $\bfk$). Quasifiniteness follows directly by checking that each fibre of $\varphi_V$ is finite. Finally, the fact that $\varphi_V$ coincides with the normalisation morphism is a consequence of \cite[§4.1.2, Prop.~1.22 and 1.25]{Liu_AG}.
\end{proof}

In the remainder of this section, we give criteria to understand when the map $\varphi_V$ is an isomorphism, and how to detect smoothness of $V$ purely from the combinatorics of the associated factorisation (cf.~\Cref{thm: main thm for criterion of smooth}). In particular, we will see that as soon as an irreducible component is not smooth, it is automatically not normal (cf. \Cref{thm:smoothness-components}). We first illustrate this by means of an example.

\begin{example}\label{ex:singular}
Let $C$ be a smooth curve and consider the reverse plane partition
\[ 
\begin{tikzpicture}
    \node at (0,0) {\young(02,24)};
    \node at (-0.85,-0.03) {$\bn=$};
\end{tikzpicture}
\]
of \Cref{Example3Factorisations},
    and its factorisation 
    \begin{center}
    \begin{tikzpicture}
    \node at (-1.2,-2.45) {$\bn$};
    \node at (0,-2.4) {\young(01,11)};
    \node at (-0.75,-2.45) {$= $};
    \node at (1.55,-2.4) {\young(00,01)};
    \node at (0.77,-2.4) {$+$};
    \node at (3.1,-2.4) {\young(01,01)};
    \node at (2.32,-2.4) {$+$};
    \node at (3.87,-2.4) {$+$};
    \node at (4.65,-2.4) {\young(00,11).};
    \end{tikzpicture}
    \end{center}
We show that the irreducible component $V\subset C^{[\bn]}$ corresponding to this factorisation is not normal, hence, in particular, not smooth.
    Notice that there is a relation
    \begin{center}
    \begin{tikzpicture}
    \node at (0,-2.4) {\young(01,11)};
    \node at (1.55,-2.4) {\young(00,01)};
    \node at (0.77,-2.4) {$+$};
    \node at (3.1,-2.4) {\young(01,01)};
    \node at (2.32,-2.4) {$=$};
    \node at (3.87,-2.4) {$+$};
    \node at (4.65,-2.4) {\young(00,11)};
    \end{tikzpicture}
    \end{center}
between the indicators appearing in this factorisation. Relations such as this one are precisely those preventing the smoothness  of the irreducible components of the moduli space, as will be formalised in \Cref{thm: main thm for criterion of smooth}. Let $P, Q$ be distinct closed points in $C$ and consider the configuration
\[
Z=\begin{matrix}
            \varnothing&\hspace{-10pt}\subset&\hspace{-10pt} P+Q\\
            \cap &&\hspace{-10pt}\cap\\
            P+Q&\hspace{-10pt}\subset&\hspace{-10pt} 2P+2Q.
        \end{matrix}
    \]
The preimage of $[Z] \in V$ under $\varphi_V$ consists of two points, namely
\[
\varphi_V^{-1}([Z])=\set{(P,P, Q, Q), (Q,Q,P,P)}\subset C\times C\times C\times C.
\]
Thus, $\varphi_V$ is not an isomorphism. Therefore, $V$ is not normal by \cite[\href{https://stacks.math.columbia.edu/tag/0AB1}{Tag 0AB1}]{stacks-project}.
\end{example}

The key point in \Cref{ex:singular}, in order to deduce the singularity of an irreducible component, was to exhibit a relation among the indicators appearing in the corresponding factorisation. We promote this idea to a general criterion for smoothness. We start proving some preliminary results.

\begin{lemma}\label{thm:smoothness-components}
For every irreducible component $V\subset C^{[\bn]}$, the following conditions are equivalent:
\begin{itemize}
    \item [\mylabel{equiv-cond-1}{\normalfont{(1)}}] $V$ is smooth,
    \item [\mylabel{equiv-cond-2}{\normalfont{(2)}}] $V$ is normal,
    \item [\mylabel{equiv-cond-3}{\normalfont{(3)}}] $\varphi_V$ is an isomorphism,
    \item [\mylabel{equiv-cond-4}{\normalfont{(4)}}] $\varphi_V$ a bijection on points and its differential is injective.
\end{itemize}
\end{lemma}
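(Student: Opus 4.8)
The plan is to establish the cycle of implications $\ref{equiv-cond-1}\Rightarrow\ref{equiv-cond-2}\Rightarrow\ref{equiv-cond-3}\Rightarrow\ref{equiv-cond-4}\Rightarrow\ref{equiv-cond-1}$, leaning on \Cref{prop:canonical cover}, which already identifies $\varphi_V$ as the normalisation morphism of $V$. The implication $\ref{equiv-cond-1}\Rightarrow\ref{equiv-cond-2}$ is immediate, since a smooth variety over a field is regular, hence normal (regular local rings are normal). For $\ref{equiv-cond-2}\Rightarrow\ref{equiv-cond-3}$, recall that the normalisation morphism of a normal variety is an isomorphism: $V$ normal means $V$ is already integrally closed in its function field, so the finite birational map $\varphi_V$ from the normalisation must be an isomorphism (a finite birational morphism onto a normal — equivalently, onto a seminormal or just integrally closed — scheme is an isomorphism; cf.\ the cited \cite[Tag 0AB1]{stacks-project}). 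The implication $\ref{equiv-cond-3}\Rightarrow\ref{equiv-cond-4}$ is formal: an isomorphism of schemes is in particular a bijection on points with injective (indeed bijective) differential at every point.

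\textbf{The nontrivial implication.} The substantive step is $\ref{equiv-cond-4}\Rightarrow\ref{equiv-cond-1}$. Here the source $\prod_{\bnu\in\Ind(\lambda)}C^{(n_\bnu)}$ is a product of symmetric products of a smooth curve, hence \emph{smooth} of dimension $\sum_\bnu n_\bnu=\omega(\bn)$ (by \Cref{TheoremRPPHalfFactorial}), and $\varphi_V$ is finite and birational (\Cref{prop:canonical cover}). I would argue as follows. Let $W=\prod_{\bnu}C^{(n_\bnu)}$ and consider $\varphi=\varphi_V\colon W\to V$. By hypothesis $\varphi$ is bijective on $\bfk$-points; since it is finite, it is a universal homeomorphism once we also know the fibres over all points are singletons — but a finite birational morphism between varieties that is injective on closed points is automatically a bijection on all points (the generic point maps to the generic point, and injectivity on closed points plus finiteness forces the fibres over non-closed points to be singletons too, by the constructibility/specialisation structure of fibres). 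So $\varphi$ is a finite universal homeomorphism. Combined with the hypothesis that $d\varphi$ is injective at every point: at a point $w\in W$ mapping to $v\in V$, the induced map on Zariski tangent spaces $T_wW\to T_vV$ is injective, so $\dim_\bfk T_vV\geq \dim_\bfk T_wW=\dim W=\dim V$ (the last equality because $W$ is smooth and $\varphi$ is birational between irreducible varieties of the same dimension, and $V$ is the component, so $\dim V=\omega(\bn)$). On the other hand $\dim_\bfk T_vV\geq \dim_v V=\dim V$ always holds, and we want the reverse. The key is to upgrade injectivity of $d\varphi$ plus bijectivity to an isomorphism: since $\varphi$ is finite and $W$ is normal (even smooth), $\varphi$ factors the normalisation of $V$, and we must see $\varphi$ itself is the normalisation, i.e.\ that $W$ \emph{is} the normalisation $\widetilde V$ — but this is exactly the content of \Cref{prop:canonical cover}. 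Hence we have $\widetilde V=W\xrightarrow{\varphi} V$ with $\varphi$ a universal homeomorphism whose differential is everywhere injective. I would then invoke the following standard fact: a universal homeomorphism $\varphi\colon W\to V$ of varieties over a field of characteristic $0$, with $W$ normal, is automatically an isomorphism (in characteristic $0$ there are no purely inseparable phenomena, so a finite universal homeomorphism onto a variety is an isomorphism as soon as the target is, say, seminormal; but even without seminormality of $V$, a universal homeomorphism that is generically an isomorphism and whose source is normal must be an isomorphism in characteristic $0$, e.g.\ by \cite[Tag 0CNF]{stacks-project} / the fact that universal homeomorphisms in char.\ $0$ are integral, radicial and finite, hence the map on structure sheaves $\OO_V\to\varphi_\ast\OO_W$ is an isomorphism onto the seminormalisation, which in char.\ $0$ equals $\OO_V$ when... ). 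To avoid relying on seminormality subtleties, the cleanest route uses the differential hypothesis directly:

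\textbf{Differential route for $\ref{equiv-cond-4}\Rightarrow\ref{equiv-cond-1}$.} Since $\varphi\colon W\to V$ is finite and birational with $W$ smooth, $\varphi$ is the normalisation. The conductor ideal $\mathfrak c\subseteq\OO_V$ (and its extension to $\OO_W$) is the largest ideal sheaf that is simultaneously an ideal in both; $\varphi$ is an isomorphism away from $V(\mathfrak c)$. Suppose $\varphi$ is not an isomorphism; then $V(\mathfrak c)\neq\emptyset$, and one picks a closed point $v$ in the non-isomorphism locus. If the fibre $\varphi^{-1}(v)$ contains two distinct points, this contradicts bijectivity — so by hypothesis $\varphi^{-1}(v)=\{w\}$ is a single point. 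But then the completed local rings satisfy $\widehat{\OO_{V,v}}\subsetneq\widehat{\OO_{W,w}}$ with $\widehat{\OO_{W,w}}$ regular, and the inclusion is finite, birational, and an equality after inverting a nonzerodivisor. For such an extension of complete local domains with regular overring and the \emph{same} residue field, the induced map on cotangent spaces $\Fm_v/\Fm_v^2\to\Fm_w/\Fm_w^2$ being \emph{surjective} (which is dual to $d\varphi$ being injective) forces $\OO_{V,v}\to\OO_{W,w}$ to be surjective by the complete Nakayama lemma (the maximal ideal $\Fm_w$ is generated by the image of $\Fm_v$, hence $\OO_{W,w}$ is generated as an $\OO_{V,v}$-module by $1$ modulo $\Fm_w$, so $\OO_{W,w}=\OO_{V,v}+\Fm_w\OO_{W,w}$, and since $\OO_{W,w}$ is a finite $\OO_{V,v}$-module, Nakayama gives $\OO_{W,w}=\OO_{V,v}$). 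This contradicts $v$ lying in the non-isomorphism locus. Therefore $\varphi$ is an isomorphism, $V\cong W$ is smooth, and $\ref{equiv-cond-4}\Rightarrow\ref{equiv-cond-1}$ follows. (Note $d\varphi$ injective $\Leftrightarrow$ cotangent map surjective is the only place the differential hypothesis — as opposed to mere bijectivity — is used, and it is essential: bijectivity alone does not give normality, as seminormal-but-not-normal targets show.)

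\textbf{Main obstacle.} The delicate point is the final Nakayama argument: one must be careful that the residue field extension $\kappa(v)\hookrightarrow\kappa(w)$ is trivial (which it is, since $\varphi$ is birational between varieties over an algebraically closed field and bijective on closed points, so every closed point has residue field $\bfk$), and that surjectivity of $\OO_{V,v}\to\OO_{W,w}$ together with finiteness really does yield equality rather than merely a quotient — this is where completeness (or Artin–Rees) is invoked to legitimately apply Nakayama to the finite module $\OO_{W,w}$ over the \emph{local} ring $\OO_{V,v}$. Everything else is standard and can be quoted from \cite{Liu_AG} and \cite{stacks-project}.
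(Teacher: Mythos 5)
Your proposal is correct and follows essentially the same route as the paper: the implications $(1)\Rightarrow(2)$, $(3)\Rightarrow(4)$ are formal, $(2)\Rightarrow(3)$ is the same appeal to \cite[\href{https://stacks.math.columbia.edu/tag/0AB1}{Tag 0AB1}]{stacks-project} via \Cref{prop:canonical cover}, and your key step is the paper's observation that a finite bijective morphism with everywhere-injective differential is a closed immersion between integral schemes of equal dimension, hence an isomorphism. The only difference is that you prove this last fact by hand with the local-ring/Nakayama argument (correctly — and note that completeness is not actually needed there, ordinary Nakayama for finite modules over the Noetherian local ring $\OO_{V,v}$ suffices), whereas the paper simply quotes it.
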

\begin{proof}
Implications \ref{equiv-cond-1} $\Rightarrow$ \ref{equiv-cond-2}, \ref{equiv-cond-3} $\Rightarrow$ \ref{equiv-cond-1} and \ref{equiv-cond-3} $\Rightarrow$ \ref{equiv-cond-4} are obvious. Implication \ref{equiv-cond-2} $\Rightarrow$ \ref{equiv-cond-3} follows from \cite[\href{https://stacks.math.columbia.edu/tag/0AB1}{Tag 0AB1}]{stacks-project}, which says precisely that given two integral schemes $X$ and $Y$, with $Y$ normal, any finite birational morphism $f\colon X\to Y$ is an isomorphism. The morphism $\varphi_V$ satisfies the required assumptions by \Cref{prop:canonical cover}. To prove \ref{equiv-cond-4} $\Rightarrow$ \ref{equiv-cond-3} we observe that a bijective, finite morphism with injective differential  is, in particular, a bijective closed immersion. Source and target are integral and have the same dimension, thus, $\varphi_V$ is an isomorphism. 
\end{proof}
\begin{prop}\label{prop: bijec comb}
Let $V\subset C^{[\bn]}$ be an irreducible component, corresponding to a factorisation $T=(n_{\bnu})_{\bnu \in \Ind(\lambda)}$. Then, $\varphi_V$ is a bijection on points if and only if there is no relation
\begin{equation}\label{eq:singrelation}
\sum_{\bnu\in \Ind(\lambda)}m_{\bnu}\cdot\bnu=0,
\end{equation}
where $m_{\bnu}\in \BZ$ are integers such that
\begin{itemize}
\item [\mylabel{singular-cpt-i}{\normalfont{(i)}}]  $\lvert m_{\bnu }\rvert \leq n_{\bnu }$ for all $\bnu \in \Ind (\lambda)$, and
 \item [\mylabel{singular-cpt-ii}{\normalfont{(ii)}}] $m_{\bnu}\not = 0$ for at least one indicator $\bnu\in\Ind(\lambda)$.
\end{itemize}
\end{prop}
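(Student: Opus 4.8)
The plan is to analyze when two distinct points of $\prod_{\bnu} C^{(n_{\bnu})}$ can have the same image under $\varphi_V$, and to show this corresponds exactly to the existence of a relation as in \eqref{eq:singrelation} subject to \ref{singular-cpt-i} and \ref{singular-cpt-ii}. First I would fix two tuples of effective divisors $(D_{\bnu})_{\bnu}$ and $(D'_{\bnu})_{\bnu}$ with the same image, meaning $\sum_{\bnu} \bnu \cdot D_{\bnu} = \sum_{\bnu} \bnu \cdot D'_{\bnu}$ as configurations, i.e.\ $\sum_{\bnu} \bnu_{\Box} D_{\bnu} = \sum_{\bnu} \bnu_{\Box} D'_{\bnu}$ as effective $0$-cycles on $C$ for every box $\Box \in \lambda$. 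Writing each $D_{\bnu}$ and $D'_{\bnu}$ as a sum of closed points with multiplicities and combining the two tuples, one obtains, at each point $P \in C$ appearing in either configuration, a relation $\sum_{\bnu} a_{P,\bnu}\,\bnu = \sum_{\bnu} a'_{P,\bnu}\,\bnu$ in $\BRPP^\lambda$, where $a_{P,\bnu} \geq 0$ is the multiplicity of $P$ in $D_{\bnu}$ and $a'_{P,\bnu} \geq 0$ likewise for $D'_{\bnu}$; summing the pointwise data back up recovers $\sum_{\bnu} n_{\bnu}\bnu = \bn = \sum_{\bnu} n_{\bnu}\bnu$, so $\sum_P a_{P,\bnu} = \sum_P a'_{P,\bnu} = n_{\bnu}$ for each $\bnu$.

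For the forward direction (non-injectivity $\Rightarrow$ relation), if the two tuples are distinct, then $a_{P,\bnu} \neq a'_{P,\bnu}$ for some $P$ and some $\bnu$; set $m_{\bnu} := a_{P,\bnu} - a'_{P,\bnu} \in \BZ$ for this particular point $P$. Then $\sum_{\bnu} m_{\bnu}\bnu = 0$ by the pointwise relation above (passing to the Grothendieck group $\BZ^\lambda$ where subtraction is legal), condition \ref{singular-cpt-ii} holds since $m_{\bnu} \neq 0$ for at least one $\bnu$, and condition \ref{singular-cpt-i} holds because $0 \leq a_{P,\bnu}, a'_{P,\bnu} \leq n_{\bnu}$ (each is a partial sum of nonnegative integers summing to $n_{\bnu}$), so $|m_{\bnu}| \leq n_{\bnu}$. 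For the reverse direction, given a relation $\sum_{\bnu} m_{\bnu}\bnu = 0$ with $|m_{\bnu}| \leq n_{\bnu}$ and not all $m_{\bnu}$ zero, split $m_{\bnu} = m_{\bnu}^+ - m_{\bnu}^-$ into positive and negative parts. Pick two distinct closed points $P, Q \in C$ and a tuple of divisors supported away from $P,Q$ realizing the "rest" of the factorisation (using $n_{\bnu} - m_{\bnu}^+ - m_{\bnu}^- \geq 0$ appropriately — actually one needs $n_{\bnu} \geq \max(m_{\bnu}^+, m_{\bnu}^-)$, which follows from $|m_{\bnu}| \leq n_{\bnu}$ since at most one of $m_{\bnu}^\pm$ is nonzero); then set $D_{\bnu} = m_{\bnu}^+ P + m_{\bnu}^- Q + (\text{rest})$ and $D'_{\bnu} = m_{\bnu}^- P + m_{\bnu}^+ Q + (\text{rest})$. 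These are genuinely different tuples (since some $m_{\bnu} \neq 0$ forces $m_{\bnu}^+ \neq m_{\bnu}^-$ at that index, so $D_{\bnu} \neq D'_{\bnu}$), yet $\sum_{\bnu}\bnu\cdot D_{\bnu} - \sum_{\bnu}\bnu\cdot D'_{\bnu} = (\sum_{\bnu} m_{\bnu}\bnu)\cdot P - (\sum_{\bnu} m_{\bnu}\bnu)\cdot Q = 0$, so they have the same image and $\varphi_V$ is not injective on points.

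The main obstacle I anticipate is purely bookkeeping: carefully matching the pointwise relations with the global factorisation constraint $\sum_P a_{P,\bnu} = n_{\bnu}$ to get the bound \ref{singular-cpt-i} exactly right, and, in the reverse construction, verifying that one can always find enough room (i.e.\ a valid "rest" tuple of divisors on the curve $C$ with disjoint-enough support, using that $C$ is an infinite smooth curve so symmetric products have points with arbitrary support configurations). Both are routine once set up, and no deep input beyond \Cref{construction:linear-comb-cycles}, \Cref{prop:canonical cover}, and the cancellativity of $\BRPP^\lambda$ is needed; in particular the equivalence is genuinely a statement about the monoid $\BRPP^\lambda$ and its Grothendieck group, transported through $\varphi_V$.
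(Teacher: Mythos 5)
Your proposal is correct and follows essentially the same route as the paper's proof: extracting, from two distinct preimages, the pointwise relation at a point where the multiplicities differ (with the bound $\lvert m_{\bnu}\rvert\leq n_{\bnu}$ coming from the degree of each divisor), and, conversely, swapping the roles of two points $P,Q$ weighted by the positive and negative parts of $m_{\bnu}$ to produce two distinct tuples with the same image. The only cosmetic difference is that the paper concentrates the ``rest'' at a single third point $R$, whereas you allow a generic residual tuple; this changes nothing.
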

\begin{proof}
Suppose  that relation (\ref{eq:singrelation}) holds, along with conditions \ref{singular-cpt-i} and \ref{singular-cpt-ii}. Then we have the relation
\[
\sum_{\underset{\mbox{\tiny $\bnu \in \Ind(\lambda)$}}{\mbox{\tiny $m_{\bnu}\ge 0$}}} {m_{\bnu}} \cdot\bnu =
\sum_{\underset{\mbox{\tiny $\bnu \in \Ind(\lambda)$}}{\mbox{\tiny $m_{\bnu}< 0$}}} -{m_{\bnu}}\cdot\bnu.
\]
Fix three distinct points $P,Q,R\in C$ and define the function $\delta(m)=\max\{m,0\}$ for $m\in \BZ$.
  Consider the two points
\begin{align*}
    W_1&=\left(\delta(m_{\bnu})\cdot P+\delta(-m_{\bnu})\cdot Q +(n_{ \bnu}-|m_{\bnu}|)R    \right)_{\bnu\in \Ind(\lambda)}\in \prod_{\bnu \in \Ind(\lambda)} C^{(n_{\bnu})},\\
      W_2&=\left(\delta(m_{\bnu})\cdot Q+\delta(-m_{\bnu})\cdot P +(n_{ \bnu}-|m_{\bnu}|)R    \right)_{\bnu\in \Ind(\lambda)}\in \prod_{\bnu \in \Ind(\lambda)} C^{(n_{\bnu})}.
\end{align*}
By condition \ref{singular-cpt-ii}, the points $W_1, W_2$ are distinct. However, they map to the same point in $C^{[\bn]}$ via $\varphi_V$, thanks to the equality
\begin{multline*}
\sum_{\underset{\mbox{\tiny $\bnu \in \Ind(\lambda)$}}{\mbox{\tiny $m_{\bnu}\ge 0$}}} {m_{\bnu}} \cdot\bnu \cdot P +
\sum_{\underset{\mbox{\tiny $\bnu \in \Ind(\lambda)$}}{\mbox{\tiny $m_{\bnu}< 0$}}} (-{m_{\bnu}})\cdot\bnu \cdot Q +
\sum_{\bnu \in \Ind(\lambda)} (n_{\bnu}-{|m_{\bnu}|})\cdot\bnu \cdot R \\
=\sum_{\underset{\mbox{\tiny $\bnu \in \Ind(\lambda)$}}{\mbox{\tiny $m_{\bnu}\ge 0$}}} {m_{\bnu}} \cdot\bnu \cdot Q +
\sum_{\underset{\mbox{\tiny $\bnu \in \Ind(\lambda)$}}{\mbox{\tiny $m_{\bnu}< 0$}}} (-{m_{\bnu}})\cdot\bnu \cdot P +
\sum_{\bnu \in \Ind(\lambda)} (n_{\bnu}-{|m_{\bnu}|})\cdot\bnu \cdot R,
\end{multline*}
which implies that $\varphi_V$ is not a bijection.
Conversely, suppose that $\varphi_V$ is not a bijection. Let
\[
\left(\sum_{P\in C}a_{{\bnu}, P}\cdot P\right)_{\bnu\in \Ind(\lambda)}, \left(\sum_{P\in C}b_{{\bnu}, P}\cdot P\right)_{\bnu\in \Ind(\lambda)}\in \prod_{{\bnu} \in \Ind(\lambda)} C^{(n_{\bnu})} 
\]
be distinct points mapping to the same point in $C^{[\bn]}$ via $\varphi_V$, which implies that
\[
\sum_{\bnu \in\Ind(\lambda)}\sum_{\underset{a_{\bnu, P}\leq b_{\bnu, P}}{P\in C}}(b_{\bnu, P}-a_{\bnu, P})\bnu\cdot P=\sum_{\bnu \in\Ind(\lambda)}\sum_{\underset{a_{\bnu, P}> b_{\bnu, P}}{P\in C}}(a_{\bnu, P}-b_{\bnu, P})\bnu\cdot P.
\]
In particular, since the original points were taken to be distinct, there exists an indicator $\bnu\in \Ind(\lambda)$ and a point $P\in C$ such that $ a_{\bnu, P}\neq b_{\bnu, P}$.  If we fix such a $P\in C$, we deduce a nontrivial relation 
\[
\sum_{\underset{a_{\bnu, P}\leq b_{\bnu, P}}{\bnu \in\Ind(\lambda)}}(b_{\bnu, P}-a_{\bnu, P})\bnu=\sum_{\underset{a_{\bnu, P}> b_{\bnu, P}}{\bnu \in\Ind(\lambda)}}(a_{\bnu, P}-b_{\bnu, P})\bnu.
\]
Given an indicator $\bnu\in\Ind(\lambda)$, we set $m_{\bnu}=b_{\bnu, P}-a_{\bnu, P}$. The collection $(m_{\bnu})_{\bnu\in\Ind(\lambda)}$ clearly satisfies the relation \eqref{eq:singrelation} and condition \ref{singular-cpt-ii}. To show that condition \ref{singular-cpt-i} holds, suppose without loss of generality that $ b_{\bnu, P}\geq a_{\bnu, P}$. Then 
\begin{align*}
    n_{\bnu}=\sum_{P\in C} b_{\bnu,P}\geq b_{\bnu, P}\geq b_{\bnu, P}-a_{\bnu, P}=m_{\bnu},
\end{align*}
which concludes the proof.
\end{proof}

\begin{prop}\label{prop: inj diff comb}
Let $V\subset C^{[\bn]}$ be an irreducible component, corresponding to a factorisation $T=(n_{\bnu})_{\bnu \in \Ind(\lambda)}$. Then $\varphi_V$ has injective differential if and only if there is no nontrivial relation
    \begin{equation*}
        \sum_{\bnu\in \Ind(\lambda)}m_{\bnu}\cdot\bnu=0,
    \end{equation*}
where $m_{\bnu}\in \BZ$ are integers and are possibly nonzero only for the indicators $\bnu$ appearing in the factorisation $T$.
\end{prop}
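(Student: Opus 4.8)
The plan is to compute the differential of $\varphi_V$ at a well-chosen point of the source and show that its injectivity is governed exactly by the absence of the stated linear relations. The natural test point is a \emph{totally punctual} configuration: fix a single closed point $P \in C$ and consider the point $w_0 = (n_{\bnu}\cdot P)_{\bnu \in \Ind(\lambda)} \in \prod_{\bnu \in \Ind(\lambda)} C^{(n_{\bnu})}$. Its image $\varphi_V(w_0)$ is the configuration $\sum_{\bnu} \bnu \cdot (n_{\bnu} \cdot P)$, which is supported entirely at $P$. The tangent space to $C^{(m)}$ at $m\cdot P$ is naturally identified with $\HH^0(C, \OO_{mP})$, i.e.\ the $m$-jets of functions at $P$; choosing a local coordinate $t$ at $P$, this is the $\bfk$-vector space $\bfk[t]/t^m$, and the deformation it parametrises is the divisor cut out by the corresponding monic degree-$m$ polynomial. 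Similarly, the tangent space to $C^{[\bn]}$ at $\varphi_V(w_0)$ — or rather, the relevant piece of it that $\varphi_V$ maps into — can be described box by box in terms of jets at $P$.

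The first step is therefore to write down the differential explicitly in these jet coordinates. An element of $T_{w_0}\bigl(\prod_{\bnu} C^{(n_{\bnu})}\bigr)$ is a tuple $(f_{\bnu})_{\bnu}$ with $f_{\bnu} \in \bfk[t]/t^{n_{\bnu}}$. Under $\varphi_V$, for each box $\Box \in \lambda$ the $\Box$-th subscheme of the image is $\sum_{\bnu} \bnu_{\Box}\cdot D_{\bnu}$, where $D_{\bnu}$ is the divisor of degree $n_{\bnu}$ at $P$; differentiating the addition-of-cycles map shows that the induced first-order deformation at the box $\Box$ is recorded by (the class of) $\sum_{\bnu} \bnu_{\Box}\cdot f_{\bnu}$ in an appropriate jet space attached to the integer $\bn_{\Box} = \sum_{\bnu}\bnu_{\Box} n_{\bnu}$. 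So $\dd\varphi_V$ sends $(f_{\bnu})_{\bnu}$ to the collection of these box-wise combinations. I expect that the kernel of $\dd\varphi_V$ consists precisely of those tuples $(f_{\bnu})$ for which $\sum_{\bnu}\bnu_{\Box}\cdot f_{\bnu} = 0$ (as a polynomial identity, after clearing the truncations) for every box $\Box$; equivalently, for which $\sum_{\bnu} f_{\bnu}\cdot \bnu = 0$ in $\bfk[t]\otimes_{\BZ} \BZ^{\lambda}$, viewing each indicator $\bnu$ as an element of the free abelian group $\BZ^\lambda$ and each $f_{\bnu}$ as a polynomial coefficient.

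The second step is to translate this polynomial condition into the integer relations in the statement. If there is a nontrivial relation $\sum_{\bnu} m_{\bnu}\cdot \bnu = 0$ with $m_{\bnu} \in \BZ$ supported on the indicators occurring in $T$ (so that $n_{\bnu} \geq 1$ whenever $m_{\bnu} \neq 0$), then taking $f_{\bnu} = m_{\bnu}$ (a nonzero constant, which lies in $\bfk[t]/t^{n_{\bnu}}$ since $n_{\bnu}\geq 1$) produces a nonzero element of $\ker \dd\varphi_V$ at $w_0$, hence $\dd\varphi_V$ is not injective there and a fortiori not everywhere. Conversely, if $\dd\varphi_V$ fails to be injective at \emph{some} point of the source — and by upper semicontinuity of the corank it then fails at $w_0$ too, after possibly specialising the supporting points to a common $P$ — pick a nonzero $(f_{\bnu}) \in \ker \dd\varphi_V$ at $w_0$. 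Writing each $f_{\bnu}$ in the coordinate $t$ and extracting the coefficient of the lowest-degree monomial $t^k$ that appears among the $f_{\bnu}$ with $\bnu$ in $T$, the polynomial identity $\sum_\bnu f_\bnu \cdot \bnu = 0$ forces $\sum_{\bnu} c_{\bnu}\cdot \bnu = 0$ where $c_{\bnu} \in \BZ$ is that coefficient of $f_{\bnu}$; these $c_{\bnu}$ are supported on the indicators of $T$ (since $f_{\bnu}$ is only nonzero, i.e.\ only defined with positive truncation length, for such $\bnu$) and not all zero, giving the forbidden relation.

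\textbf{Main obstacle.} The delicate point is the identification of the differential of $\varphi_V$ in jet coordinates and, in particular, making precise the claim that its kernel is detected by a single polynomial identity $\sum_\bnu f_\bnu\cdot\bnu = 0$ rather than by $|\lambda|$ separate truncated identities (one per box, each modulo $t^{\bn_\Box}$). The subtlety is that different boxes $\Box$ impose the vanishing of $\sum_\bnu \bnu_\Box f_\bnu$ only up to order $\bn_\Box$, and one must check these local truncated conditions patch together to the honest equality in $\BZ^\lambda\otimes\bfk[t]$; the socle boxes, where $\bn_\Box$ is largest, should be the controlling ones, much as in \Cref{prop:weight-formula}. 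A clean way to handle this is to reduce to the point $w_0$ by semicontinuity, where all the relevant jet lengths are pinned down by the single coordinate $t$, and to use that the addition-of-cycles map on symmetric products is, in suitable coordinates, literally multiplication of monic polynomials — so its differential is the familiar "logarithmic derivative" linear map, which is transparent to analyse. Once the differential is pinned down, the equivalence with the integer relations is essentially the observation that a $\bfk[t]$-linear relation among a finite set of integer vectors, after taking the leading coefficient, descends to a $\BZ$-linear relation among the same vectors.
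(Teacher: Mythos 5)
Your strategy coincides with the paper's: reduce (\'etale-locally, $C=\BA^1$) to the unique torus-fixed, totally punctual point, where addition of $0$-cycles is multiplication of monic polynomials, and convert kernel vectors of $\dd\varphi_V$ into integer relations among the indicators by extracting coefficients. However, the computation you flag as the ``main obstacle'' and do not carry out is exactly where your sketch goes wrong, and the one concrete formula you do write down fails. In your coordinates ($f_{\bnu}\in\bfk[t]/t^{n_{\bnu}}$ recording the deformation $t^{n_{\bnu}}+\varepsilon f_{\bnu}$ of the monic polynomial), the differential of the addition map at the box $\Box$ is \emph{not} $(f_{\bnu})\mapsto \sum_{\bnu}\bnu_{\Box}f_{\bnu}$: expanding $\prod_{\bnu}(t^{n_{\bnu}}+\varepsilon f_{\bnu})^{\bnu_{\Box}}$ modulo $\varepsilon^2$ gives $t^{\bn_{\Box}}+\varepsilon\sum_{\bnu}\bnu_{\Box}\,f_{\bnu}\,t^{\bn_{\Box}-n_{\bnu}}$, so the coefficients combine only after being aligned from the \emph{top} degree (equivalently, it is the Laurent tails $f_{\bnu}/t^{n_{\bnu}}$ that add --- that is the precise content of your ``logarithmic derivative'' remark). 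Consequently your witness $f_{\bnu}=m_{\bnu}$ (a constant) is in general not in the kernel: its image at $\Box$ is $\varepsilon\sum_{\bnu}\bnu_{\Box}m_{\bnu}t^{\bn_{\Box}-n_{\bnu}}$, and the vanishing of this polynomial requires $\sum_{\bnu\,:\,n_{\bnu}=c}m_{\bnu}\bnu=0$ separately for each value $c$ of $n_{\bnu}$, a strictly stronger condition than $\sum_{\bnu}m_{\bnu}\bnu=0$. For instance, for the factorisation $\bn=\bnu_7+2\bnu_{12}+\bnu_{14}+\bnu_{18}$ with relation $\bnu_7+\bnu_{12}-\bnu_{14}-\bnu_{18}=0$ (case $T_{12}$ of \Cref{app:computations}), the constant choice leaves the term $\varepsilon\,\bnu_{12}(\Box)\,t^{\bn_{\Box}-2}$ uncancelled. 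The correct witness places $m_{\bnu}$ in the top coefficient, $f_{\bnu}=m_{\bnu}t^{n_{\bnu}-1}$, which is exactly what the paper's proof does; dually, in your converse direction one must extract, for a fixed $i$, the coefficient of $t^{n_{\bnu}-i}$ in each $f_{\bnu}$, not the coefficient of a fixed power $t^k$.

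Two further points need tightening. The extracted coefficients a priori lie in $\bfk$, not in $\BZ$ as you assert; since the $\bnu$ are integer vectors, a nontrivial $\bfk$-linear relation among them yields a nontrivial rational one (ranks of integer matrices are insensitive to field extension in characteristic $0$), which clears denominators to the required $\BZ$-relation --- the paper dispatches this with ``up to scaling''. And the reduction to the punctual point via ``upper semicontinuity of the corank'' needs the observation that the non-injectivity locus is closed and invariant under the torus action on $\prod_{\bnu}(\BA^1)^{(n_{\bnu})}$, hence contains the unique fixed point whenever it is nonempty; your parenthetical about specialising the supporting points is the right idea but is doing real work and should be made explicit. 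With these repairs your argument becomes the paper's proof.
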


\begin{proof}
       The statement is étale-local in nature so we suppose $C=\BA^1$. In this case, the morphism $\varphi_V$ is a torus-equivariant morphism of toric varieties, and therefore it is enough to check the injectivity of the  differential   at the unique torus-fixed point $[Y]=(n_{\bnu}\cdot 0)_{\bnu\in\Ind(\lambda)}$. 
    A tangent vector $v\in T_{[Y]} \prod_{\bnu \in\Ind(\lambda)}C^{(n_{\bnu})}$ can be represented by a tuple of ideals in $\bfk[x,\varepsilon]/(\varepsilon^2)$
    \[
    v=\left( x^{n_{\bnu}} + \varepsilon\sum_{i=1}^{n_{\bnu}} \alpha_{\bnu,i}  x^{n_{\bnu}-i} \right)_{\bnu\in\Ind(\lambda)},
    \]
    for some $\alpha_{\bnu,i}\in \bfk$  and  $i=1,\ldots,n_{\bnu}$.  In particular, the zero vector corresponds to choosing $a_{\bnu, i}=0$ for all $\bnu$ and $i$.
    Similarly, the image $\mathrm{d}\varphi_{V, [Y]}(v)$ can be represented by the tuple of ideals 
    \[
    \begin{split}
       \left( \prod_{\bnu\in\Ind(\lambda)}\left( x^{n_{\bnu}} + \varepsilon\sum_{i=1}^{n_{\bnu}} \alpha_{\bnu,i}  x^{n_{\bnu}-i} \right)^{\bnu_{\Box}}\right)_{\Box\in \lambda}&{}=\left(x^{\bn_{\Box} }  +\varepsilon\sum _{\bnu\in\Ind(\lambda) }\bnu_{\Box}\left(\sum_{i=1}^{\bn_{\Box}} \alpha_{\bnu,i}   x^{\bn_{\Box}-i}  \right)\right)_{\Box\in \lambda} = {}\\
& {} = \left(x^{\bn_{\Box} }  +\varepsilon \sum_{i=1}^{\bn_{\Box}} \left(\sum_{\bnu\in\Ind(\lambda) }\bnu_{\Box}\alpha_{\bnu,i} \right)  x^{\bn_{\Box}-i}  \right)_{\Box\in \lambda}
    \end{split}
\]
   with the assumption $\alpha_{i,\bnu}=0$, for $i>n_{\bnu}$. Assume now that the differential $\mathrm{d}\varphi_V$ is not injective at $[Y]$ and let $v$ be a nonzero vector such that $\mathrm{d}\varphi_{V, [Y]}(v)=0$. This implies that there exists a certain index $i$ such that
\[
    \sum _{\bnu\in\Ind(\lambda) } \bnu_\Box\alpha_{\bnu,i}=0,
\]  
      for all $\Box\in\lambda$, and such that not all $\alpha_{\bnu, i}$ are nonzero. Notice that up to scaling, such coefficients can be taken to be integers.
    Ranging over all boxes $\Box \in \lambda$ yields the required nontrivial relation
      \[\sum _{\bnu\in\Ind(\lambda) }\bnu  \alpha_{\bnu,i} =0, \]
    which clearly involves only the indicators $\bnu$ appearing in the factorisation $T$.
      The converse is analogous. In fact, assume there is a nontrivial relation 
      \[
        \sum_{\bnu\in \Ind(\lambda)}m_{\bnu}\cdot\bnu=0,
      \]
      involving only the indicators $\bnu$ appearing in the factorisation $T$. Then the nonzero vector 
        \[
    v=\left( x^{n_{\bnu}} +  \varepsilon   m_{\bnu}  x^{n_{\bnu}-1}\right)_{\bnu\in\Ind(\lambda)}\in T_{[Y]} \prod_{\bnu \in\Ind(\lambda)}C^{(n_{\bnu})}
    \]
    belongs to the kernels of $\mathrm{d}\varphi_{V, [Y]}$.
\end{proof}
%}
%\end{comment}

\begin{remark}\label{rmk:inj-diff-implies-bijection}
Combining \Cref{prop: bijec comb} and \Cref{prop: inj diff comb} with one another shows that the injectivity of $\mathrm{d}\varphi_V$ implies the bijectivity of $\varphi_V$ on points. The latter condition can thus be dropped in \Cref{thm:smoothness-components}\,\ref{equiv-cond-4}.
\end{remark}

We can now state  the combinatorial criterion for smoothness, which will conclude the proof of \Cref{mainB} from the Introduction.

\begin{theorem}\label{thm: main thm for criterion of smooth}
Let $V\subset C^{[\bn]}$ be an irreducible component, corresponding to a factorisation $T=(n_{\bnu})_{\bnu \in \Ind(\lambda)}$. Then $V$ is smooth if and only if there is no nontrivial relation
\begin{equation}\label{eq:singrelation2new}
        \sum_{\bnu\in \Ind(\lambda)}m_{\bnu}\cdot\bnu=0,
    \end{equation}
where $m_{\bnu}\in \BZ$ are integers and are possibly nonzero only for the indicators $\bnu$ appearing in the factorisation $T$.
\end{theorem}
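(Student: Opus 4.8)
The plan is to deduce the theorem formally, by chaining together the results already proved in this subsection; no new geometric input is required. The chain of equivalences to be established is: $V$ smooth $\iff$ $\varphi_V$ is an isomorphism $\iff$ $\mathrm{d}\varphi_V$ is injective $\iff$ there is no nontrivial relation as in \eqref{eq:singrelation2new}. Each link in this chain has, in effect, already been supplied.

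First I would invoke \Cref{thm:smoothness-components}, which identifies smoothness of $V$ with normality of $V$, with $\varphi_V$ being an isomorphism, and with $\varphi_V$ being simultaneously a bijection on points and having injective differential. The morphism to which this applies is the normalisation morphism of \Cref{prop:canonical cover}: its source $\prod_{\bnu\in\Ind(\lambda)} C^{(n_{\bnu})}$ and its target $V$ are integral of the same dimension $\omega(\bn) = \sum_{\bnu} n_{\bnu}$, by \Cref{thm:irr components}\,\ref{irr-cpt-2} together with \Cref{TheoremRPPHalfFactorial}, so all the hypotheses are in force. Second, I would use \Cref{rmk:inj-diff-implies-bijection}: since injectivity of $\mathrm{d}\varphi_V$ already forces $\varphi_V$ to be a bijection on points, condition \ref{equiv-cond-4} of \Cref{thm:smoothness-components} collapses to the single requirement that $\mathrm{d}\varphi_V$ be injective. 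Hence $V$ is smooth if and only if $\mathrm{d}\varphi_V$ is injective.

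Third, \Cref{prop: inj diff comb} translates the injectivity of $\mathrm{d}\varphi_V$ precisely into the nonexistence of a nontrivial relation $\sum_{\bnu\in\Ind(\lambda)} m_{\bnu}\cdot\bnu = 0$ with $m_{\bnu}\in\BZ$ possibly nonzero only for the indicators appearing in $T$, which is exactly \eqref{eq:singrelation2new}. Composing the three steps proves the theorem. Moreover, since conditions (a)–(d) of \Cref{mainB} are \Cref{thm:smoothness-components} refined by \Cref{rmk:inj-diff-implies-bijection}, and condition (e) is the negation of \eqref{eq:singrelation2new}, this argument simultaneously completes the proof of \Cref{mainB}.

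I do not expect a genuine obstacle at this stage: all the substantive work lives in \Cref{prop: bijec comb} and, above all, in \Cref{prop: inj diff comb} — the étale-local reduction to $C=\BA^1$, the identification of $\varphi_V$ with a torus-equivariant morphism of toric varieties, and the explicit computation of $\mathrm{d}\varphi_V$ at the unique torus-fixed point. The only point demanding care in the present argument is bookkeeping: verifying that each cited statement is applied with its hypotheses met — finiteness and birationality of $\varphi_V$, and integrality and equidimensionality of its source and target — all of which are guaranteed by \Cref{prop:canonical cover} and \Cref{thm:irr components}.
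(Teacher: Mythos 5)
Your proposal is correct and follows exactly the paper's own proof, which reads: ``This follows combining \Cref{thm:smoothness-components}, \Cref{prop: bijec comb}, \Cref{prop: inj diff comb} and \Cref{rmk:inj-diff-implies-bijection} with one another.'' Your chain of equivalences (smoothness $\Leftrightarrow$ $\varphi_V$ an isomorphism $\Leftrightarrow$ $\mathrm{d}\varphi_V$ injective $\Leftrightarrow$ no nontrivial relation), with \Cref{rmk:inj-diff-implies-bijection} used to drop the bijectivity hypothesis from condition \ref{equiv-cond-4}, is precisely the intended argument, and your hypothesis-checking via \Cref{prop:canonical cover} and \Cref{thm:irr components} is sound.
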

\begin{proof}
This follows combining \Cref{thm:smoothness-components}, \Cref{prop: bijec comb}, \Cref{prop: inj diff comb} and \Cref{rmk:inj-diff-implies-bijection} with one another.
\end{proof}

\Cref{ex:big} illustrates why the combinatorial criteria of Propositions \ref{prop: bijec comb}, \ref{prop: inj diff comb} are not equivalent.

\begin{example}\label{ex:big}
Consider the reverse plane partition 
\begin{center}
\begin{tikzpicture}
  \node at (-1.3,0) {$\bn=$};
        \node at (0,0) {\young(003,025,355)};
\end{tikzpicture}
\end{center}
and let $V\subset C^{[\bn]}$ be the irreducible component associated to the factorisation 
  \begin{center}
  \begin{tikzpicture}
  \node at (-1.3,0) {$\bn=$};
  \node at (0,-1.) {$\bnu_1$};
\node at (0,0) { \young(001,001,111)};
\node at (1,0) {$+$};
  \node at (2,-1.) {$\bnu_2$};
\node at (2,0) {\young(000,011,011)};
\node at (3,0) {$+$};
  \node at (4,-1.) {$\bnu_3$};
\node at (4,0) {\young(000,001,111)};
\node at (5,0) {$+$};
  \node at (6,-1.) {$\bnu_4$};
\node at (6,0) {\young(001,001,011)};
\node at (7,0) {$+$};
  \node at (8,-1.) {$\bnu_5$};
\node at (8,0) {\young(001,011,111)}; 
  \end{tikzpicture}
  \end{center}
  By a simple computation, one can see that all the relations among $\bnu_1, \dots, \bnu_5$ are generated by 
  \[
  2\bnu_1+\bnu_2=\bnu_3+\bnu_4+\bnu_5,
  \]
  which implies that $V$ is singular, $\varphi_V$ does not have injective differential but it is a bijection on points.
\end{example}
 
\begin{corollary}\label{cor:smoothle3}
Let $\bn$ be a reverse plane partition and $C$ a smooth curve. If $\omega(\bn)\le 3$, all the irreducible components of $C^{[\bn]}$ are smooth.
\end{corollary}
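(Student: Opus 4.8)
The plan is to reduce the statement to a purely combinatorial fact, namely the $\BZ$-linear independence of any small set of Young indicators, and then to establish that fact by a short case analysis exploiting that indicators are $\{0,1\}$-valued. First I would note that, by \Cref{TheoremRPPHalfFactorial}, every factorisation of $\bn$ has length equal to $\omega(\bn)\le 3$. Hence, if $V\subset C^{[\bn]}$ is an irreducible component with associated factorisation $T=(n_{\bnu})_{\bnu\in\Ind(\lambda)}$, the set $S=\{\bnu\in\Ind(\lambda)\mid n_{\bnu}>0\}$ of indicators occurring in $T$ has cardinality at most $3$. By the smoothness criterion of \Cref{thm: main thm for criterion of smooth}, $V$ is smooth exactly when the indicators in $S$ satisfy no nontrivial integral relation, that is, when $S$ is linearly independent over $\BZ$ (equivalently over $\BQ$). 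So everything reduces to the claim: \emph{any set of at most three distinct Young indicators is linearly independent}.

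To prove the claim I would use the weight $\omega$, which by \Cref{LemmaYoungIndicatorWeightOne} equals $1$ on every Young indicator, is additive, and is given by a manifestly linear formula in the entries. Suppose $\bnu_1,\dots,\bnu_r$ are distinct Young indicators with $r\le 3$ and $\sum_{k=1}^r a_k\bnu_k=0$ for integers $a_k$. Comparing the positive and negative parts of this relation and applying $\omega$ (additive on $\BRPP^\lambda$) gives $\sum_k a_k=0$. If $r\le 2$ this already concludes the proof: for $r=1$ it says $a_1=0$, and for $r=2$ it gives $a_2=-a_1$, so that $a_1(\bnu_1-\bnu_2)=0$ forces $a_1=0$ because $\bnu_1\ne\bnu_2$.

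The case $r=3$ is the technical core. Assume for contradiction that the $a_k$ are not all zero; since $a_1+a_2+a_3=0$, at least two are nonzero, and after relabelling I may take $a_1\ne 0$. Substituting $a_2=-(a_1+a_3)$ into the relation yields
\[
a_1(\bnu_1-\bnu_2)+a_3(\bnu_3-\bnu_2)=0 .
\]
Choose a box $\Box\in\lambda$ at which $\bnu_1$ and $\bnu_2$ disagree (possible as $\bnu_1\ne\bnu_2$); since all indicators are $\{0,1\}$-valued, $(\bnu_1-\bnu_2)(\Box)=\pm 1$, so evaluating at $\Box$ gives $a_3(\bnu_3-\bnu_2)(\Box)=\mp a_1\ne 0$, whence $(\bnu_3-\bnu_2)(\Box)=\pm 1$ and $\lvert a_3\rvert=\lvert a_1\rvert$. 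If $a_3=-a_1$, then $a_2=0$ and the relation reads $a_1(\bnu_1-\bnu_3)=0$, forcing $\bnu_1=\bnu_3$, a contradiction. If $a_3=a_1$, then dividing by $a_1$ the relation reads $\bnu_1+\bnu_3=2\bnu_2$; evaluated at any box the left side lies in $\{0,1,2\}$ and the right side in $\{0,2\}$, so $\bnu_1$ and $\bnu_3$ agree at every box, again a contradiction. This proves the claim, hence the corollary.

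I expect the reduction steps to be immediate from \Cref{TheoremRPPHalfFactorial} and \Cref{thm: main thm for criterion of smooth}, and the only real work to be keeping the $r=3$ argument clean. It is worth recording that the bound $\omega(\bn)\le 3$ is sharp: \Cref{ex:singular} produces four indicators subject to the nontrivial relation $\bnu_1+\bnu_2=\bnu_3+\bnu_4$, giving a non-normal (hence singular) component already when $\omega(\bn)=4$.
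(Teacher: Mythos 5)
Your proof is correct, and the reduction to \Cref{TheoremRPPHalfFactorial} and \Cref{thm: main thm for criterion of smooth} is exactly the paper's; the difference lies in how the key combinatorial fact is established. The paper handles the length-three case by a support argument: writing a putative relation as $m_1\bnu_1=m_2\bnu_2+m_3\bnu_3$ with positive coefficients, it compares the underlying upper sets, uses connectedness of $U_1$ to force $U_2\cap U_3\neq\varnothing$, and extracts contradictory equations among the $m_i$ by evaluating at boxes in $U_2\cap U_3$ and in $U_2\smallsetminus U_3$. You instead prove the stronger and cleaner statement that \emph{any} three distinct Young indicators are $\BZ$-linearly independent, using only two facts: additivity of the weight together with $\omega(\bnu)=1$ (which pins down $\sum_k a_k=0$), and the $\{0,1\}$-valuedness of indicators (which, via the dichotomy $a_3=\pm a_1$ and the parity argument $\bnu_1+\bnu_3=2\bnu_2$, forces two indicators to coincide). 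Your route never invokes connectedness of the supporting upper sets, so it applies verbatim to any three distinct $\{0,1\}$-valued reverse plane partitions of weight one, and it avoids the slightly delicate step in the paper where connectedness of $U_1$ is used to rule out $U_2\cap U_3=\varnothing$. Your closing remark on sharpness via \Cref{ex:singular} matches the paper's own remark following the corollary.
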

\begin{proof}
    By definition, since $\omega(\bn)\leq 3$, then any factorisation of $\bn$ has length at most 3. If $\bn=3\bnu_1 $ or $\bn=2\bnu_1+\bnu_2$, with $\bnu_1\neq \bnu_2$, then a relation of the form \eqref{eq:singrelation2new} clearly cannot exist. Assume therefore  that the factorisation has the form $\bn=\bnu_1+\bnu_2+\bnu_3$, with $\bnu_i$ three different indicators for $i=1,2,3$. 
 %(the other cases are easily checked to hold). 
 If the corresponding component is singular, by Theorem \ref{thm: main thm for criterion of smooth} there holds a relation of the form
 \begin{align}\label{eqn: boooh}
      m_1\bnu_1=m_2\bnu_2+m_3\bnu_3,
 \end{align}
such that $m_{i}>0$ for $i=1,2,3$. Denote by $U_i$ the supporting upper sets of $\bnu_i$, i.e. the collection of boxes of $\lambda$ such that $\bnu_i(\Box)\neq 0$, for all $i=1, 2,3$.
We have that $U_1= U_2\cup U_3 $. For the first implication,  assume we have a box $\Box\in U_1$ but  $\Box\notin U_2, U_3$. Then \eqref{eqn: boooh} evaluated at $\Box$ yields $m_1=0$, which is a contradiction. Conversely, assume we have a box $\Box \in U_2\cup U_3$ but $\Box\notin U_1$. Then  \eqref{eqn: boooh} evaluated at $\Box$ yields $m_2\bnu_2(\Box)+m_3\bnu_3(\Box)=0$, a contradiction. Therefore, there is a decomposition 
\[
U_1=(U_2\setminus U_3)\, \sqcup (U_3\setminus U_2)\, \sqcup (U_2\cap U_3).
\]
Since $U_1$ is connected, we have that  $U_2\cap U_3\neq \varnothing$, which by \eqref{eqn: boooh} implies that $m_1=m_2+m_3$. Finally, since $U_2\neq U_3$, we can assume with loss of generality that there exists a box $\Box\in U_2\setminus U_3$. Evaluating \eqref{eqn: boooh} at $\Box$ implies that $m_1=m_2$, from which we deduce that $m_3=0$, a contradiction.
\end{proof}

\begin{remark}
 The bound in \Cref{cor:smoothle3} is optimal. Indeed, \Cref{ex:singular} offers an example where an irreducible component is singular and corresponds to a factorisation of length 4.   
\end{remark}

\smallbreak
It is worth mentioning, at this point, that there are two distinguished classes of components of the double nested Hilbert scheme.

\begin{definition}\label{def: standard component}
Given a smooth curve $C$, we define  the \emph{standard component}, denoted by  $C_{\sta}^{[\bn]}$, as the irreducible component of $C^{[\bn]}$ associated to the standard factorisation of $\bn \in \BRPP^\lambda$, 
(cf. Definition \ref{def: standard}). Moreover, if $\bn$ admits a complete factorisation, we call its associated irreducible component \emph{complete}.
\end{definition}
 
These distinguished components are always smooth.

\begin{prop}\label{prop:standardsmooth}
  Let $\bn$ be a reverse plane partition and  $C$ be a smooth curve. Then, the standard component $C^{[\bn]}_{\sta}$ is smooth.
\end{prop}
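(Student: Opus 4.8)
The plan is to invoke the combinatorial criterion of \Cref{thm: main thm for criterion of smooth}: the component $C^{[\bn]}_{\sta}$ is smooth if and only if the Young indicators occurring in the standard factorisation of $\bn$ (\Cref{def: standard}) are linearly independent over $\BZ$. So the task reduces to proving this linear independence, and the layered structure of the standard factorisation is exactly what makes it work.

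First I would fix bookkeeping. Write $\bn(\lambda) = \{k_1 < \dots < k_t\}$, set $k_0 = 0$, and $V_s = \bn^{-1}(\{k_s,\dots,k_t\})$, so that $\lambda = V_1 \supsetneq V_2 \supsetneq \dots \supsetneq V_t$ is a strictly decreasing chain of upper sets and the indicators occurring (with positive coefficient) in the standard factorisation are among the $\chi_U$ with $U$ a connected component of some $V_s$. The crucial preliminary remark is that if $U$ is a connected component of $V_s$ and $U \subseteq V_{s+1}$, then $U$ is again a connected component of $V_{s+1}$, because any connected subset of $V_{s+1}$ strictly containing $U$ would in particular be a connected subset of $V_s \supseteq V_{s+1}$, contradicting maximality of $U$. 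Consequently, for a fixed connected component $U$ the set of indices $s$ such that $U$ is a connected component of $V_s$ is an interval ending at some largest value, which I call the \emph{level} $s(U)$; moreover $U$ must contain a box of value $k_{s(U)}$, for otherwise $U \subseteq V_{s(U)+1}$, contradicting maximality of $s(U)$. I also record that two distinct indicators of the same level $s$ are disjoint, being distinct connected components of $V_s$.

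The heart of the argument is then a triangularity computation. Suppose $\sum_U m_U \chi_U = 0$ is a relation among the distinct indicators $U$ appearing in the standard factorisation, with $m_U \in \BZ$. For each such $U$ pick a box $\Box_U \in U$ with $\bn(\Box_U) = k_{s(U)}$. If another appearing indicator $U'$ also contains $\Box_U$, then from $\Box_U \in U' \subseteq V_{s(U')}$ we get $k_{s(U)} = \bn(\Box_U) \ge k_{s(U')}$, hence $s(U') \le s(U)$; and if $s(U') = s(U)$ then $\Box_U \in U \cap U'$ with $U, U'$ connected components of $V_{s(U)}$, forcing $U' = U$. Evaluating the relation at $\Box_U$ thus gives $0 = m_U + \sum_{U' \ni \Box_U,\, s(U') < s(U)} m_{U'}$, and an induction on $s(U)$ — the base case being the smallest level occurring, where the correction sum is empty — yields $m_U = 0$ for every $U$. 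This is the desired linear independence, and \Cref{thm: main thm for criterion of smooth} then gives the smoothness of $C^{[\bn]}_{\sta}$.

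I expect the only subtle point to be the bookkeeping around the level $s(U)$: since in \Cref{def: standard} the same indicator $\chi_U$ may be contributed by several consecutive values of $s$, it is cleanest to index the relation by the \emph{distinct} indicators appearing and to attach to each its maximal level; once this is set up correctly, everything reduces to the evaluation-at-$\Box_U$ step above, which is entirely routine.
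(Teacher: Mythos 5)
Your proof is correct and follows essentially the same route as the paper: reduce to $\BZ$-linear independence of the indicators in the standard factorisation via \Cref{thm: main thm for criterion of smooth}, then evaluate the relation at a box of value exactly $k_s$ inside each component and induct on the level. Your bookkeeping with the maximal level $s(U)$ is in fact slightly more careful than the paper's version, which picks a box in $U_{s,i}\setminus\coprod_j U_{s+1,j}$ without addressing the case where that set is empty (i.e.\ where the same connected upper set recurs as a component of several consecutive $V_s$); your maximality argument guarantees the required box exists.
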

\begin{proof}
Consider the standard factorisation $T$ given by
\[
\bn = \sum_{s=1}^t\sum_{i=1}^{r_s} \,( k_s-k_{s-1})\chi_{U_{s,i}},
\]
with notation as in \Cref{def: standard}. By  \Cref{thm: main thm for criterion of smooth},  we just need to prove that we cannot write relations of the form \eqref{eq:singrelation2new} among the indicators $\chi_{U_{s,i}}$.

Assume that there is such a relation 
\begin{align}\label{eqn: standard proof}
    \sum_{s=1}^t\sum_{i=1}^{r_s} m_{s,i}\chi_{U_{s,i}}=0.
\end{align}
For $i=1, \dots, r_1$,  take a box $\Box\in U_{1,i}\setminus \left(\coprod_{j=1}^{r_2}U_{2,j} \right)$. Then, \eqref{eqn: standard proof} implies that
\[
 m_{1,i}\chi_{U_{1,i}}(\Box)=0,
\]
since $\chi_{U_{s,l}}(\Box)=\chi_{U_{1,j}}(\Box)=0$ for all $ s\geq 2$, all $l=1, \dots, r_s $ and all $j\neq i$. Therefore, $m_{1,i}=0$ for all $i=1, \dots, r_1$. Next, consider a box $\Box\in U_{2,i}\setminus \left(\coprod_{j=1}^{r_3}U_{3,j} \right)$. By the same reasoning as above, we have that $m_{2,i}=0$ for $i=1, \dots, r_2 $. Iterating this process yields that $m_{s,i}=0 $ for all $s=1, \dots, t$ and all $i=1, \dots, r_s$, which contradicts nontriviality of the relation and concludes the proof.
\end{proof}
\begin{remark}
   The very same proof of \Cref{prop:standardsmooth} holds by replacing the standard component with  any irreducible component whose Young indicators appearing in the corresponding  factorisation are totally ordered,  with respect to the natural partial order of $\BRPP^\lambda$.  
\end{remark}

\begin{prop}\label{prop:completesmooth}
  Let $\bn$ be a reverse plane partition admitting a complete factorisation and  $C$ be a smooth curve. Then, the complete irreducible component  of $C^{[\bn]}$ is smooth.
\end{prop}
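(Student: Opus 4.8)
The plan is to reduce the statement to the combinatorial criterion of \Cref{thm: main thm for criterion of smooth}. The complete factorisation of $\bn$ is unique (by \Cref{cor:uniquecomplete}), so by \Cref{thm:irr components} the \emph{complete component} $V \subset C^{[\bn]}$ is a well-defined irreducible component, and \Cref{thm: main thm for criterion of smooth} tells us that $V$ is smooth precisely when the Young indicators occurring with nonzero coefficient in the complete factorisation $T$ of $\bn$ satisfy no nontrivial $\BZ$-linear relation. Hence the whole proof boils down to showing that these particular indicators are $\BZ$-linearly independent in the free abelian group $\BZ^\lambda$.

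First I would unwind the definition of completeness (\Cref{def-standard-factorisation}): each indicator $\bnu = \chi_U$ appearing in $T$ has an upper set $U \subseteq \lambda$ with a \emph{unique} minimum $(i,j)$; since $U$ is an upper set, this forces $U$ to be the principal upper set $U_{(i,j)} = \{(i',j') \in \lambda \mid (i',j') \ge (i,j)\}$. In particular, distinct indicators occurring in $T$ have distinct minima.

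The key step is then the elementary identity already recorded in the proof of \Cref{prop-complete}: for such an indicator, $\Delta \chi_{U_{(i,j)}}$ is the Young filling taking value $1$ at $(i,j)$ and $0$ elsewhere, i.e.\ a standard basis vector of $\BZ^\lambda$ (if one prefers not to cite that computation, one factors the defining expression \eqref{eq:derivative} of the derivative and checks it vanishes unless the box is $(i,j)$). Since $\Delta \colon \BZ^\lambda \to \BZ^\lambda$ is additive, applying it to a putative relation $\sum_{\bnu} m_{\bnu}\cdot\bnu = 0$ with the $\bnu$ ranging over the indicators in $T$ produces a relation among \emph{pairwise distinct} standard basis vectors of $\BZ^\lambda$ with coefficients $m_{\bnu}$, forcing every $m_{\bnu}=0$. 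Thus no nontrivial relation exists, and \Cref{thm: main thm for criterion of smooth} yields the smoothness of $V$.

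Once \Cref{thm: main thm for criterion of smooth} is in hand there is essentially no obstacle; the only points needing care are the identification of ``upper set with a unique minimum'' with ``principal upper set'', and the observation that the derivative operator transports the relation-existence question into a triviality about the standard basis. One could alternatively note that $\{\chi_{U_{(i,j)}}\}_{(i,j)\in\lambda}$ is in fact a $\BZ$-basis of $\BZ^\lambda$ (the map $\Delta$ being a unitriangular integral change of basis) and that the indicators of any complete factorisation form a subset of this basis, but the weaker linear-independence statement above already suffices.
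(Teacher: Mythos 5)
Your proof is correct and follows essentially the same route as the paper: both reduce to the relation criterion of \Cref{thm: main thm for criterion of smooth} and then exploit the fact that the derivative of a principal-upper-set indicator is a standard basis vector of $\BZ^\lambda$. The only cosmetic difference is that you apply $\Delta$ directly to the putative relation, whereas the paper splits the relation into its positive and negative parts and invokes the uniqueness of complete factorisations (\Cref{cor:uniquecomplete}), whose proof is exactly your $\Delta$-argument.
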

\begin{proof}
Let $T=(n_{\bnu})_{\bnu \in \Ind(\lambda)}$ be the complete factorisation. By \Cref{thm: main thm for criterion of smooth},  we just need to prove there are no relations of the form \eqref{eq:singrelation2new}. Assume there is such a relation
\begin{align}\label{eqn: complete prooof}
    \sum_{\underset{\mbox{\tiny $\bnu \in \Ind(\lambda)$}}{\mbox{\tiny $m_{\bnu}\ge 0$}}} {m_{\bnu}} \cdot\bnu =
\sum_{\underset{\mbox{\tiny $\bnu \in \Ind(\lambda)$}}{\mbox{\tiny $m_{\bnu}< 0$}}} -{m_{\bnu}}\cdot\bnu.
\end{align}
Both left-hand-side and right-hand-side of \eqref{eqn: complete prooof} are complete factorisations of the same reverse plane partition, which implies that $m_{\bnu}=0$ for all indicators $\bnu$ by the unicity of complete factorisations, see \cref{cor:uniquecomplete}.
\end{proof}

%%%%%%%%%%%%%%%%%%%%%%%%%%%%%%%%%%%%%%%%%%%%%%%%%%%%%%%%%%%%%
%%%%%%%%%%%%%%%%%%%%%%%%%%%%%%%%%%%%%%%%%%%%%%%%%%%%%%%%%%%%%
\section{Virtual structure, reducedness and local equations}\label{sec:virtual}

%%%%%%%%%%%%%%%%%%%%%%%%%%%%%%%%%%%%%%%%%%%%%%%%%%%%%%%%%%%%% 
\subsection{Zero locus description and reducedness}\label{subsec:zero-locus-description}
We recall from \cite[Sec.~2.4]{Mon_double_nested} the description of the double nested Hilbert scheme of points on a smooth curve as the zero locus of a section of a vector bundle inside a smooth ambient space.

Let $C$ be a smooth curve, $ \bn$ a reverse plane partition with shape $\lambda$ and  define 
\begin{align*}
A_{C,\bn}=C^{(\bn_{0,0})}\times \prod_{\substack{(i,j)\in \lambda\\ i\geq 1}} C^{(\bn_{i,j}-\bn_{i-1,j})}\times \prod_{\substack{(l,k)\in \lambda\\ k\geq 1}} C^{(\bn_{l,k}-\bn_{l,k-1})}.
\end{align*}
The ambient space $A_{C,\bn}$ is a smooth quasiprojective variety of dimension 
\[
\dim A_{C,\bn}=\bn_{0,0}+\sum_{\substack{(i,j)\in \lambda\\ i\geq 1}}(\bn_{i,j}-\bn_{i-1,j})+\sum_{\substack{(l,k)\in \lambda\\ k\geq 1}}(\bn_{l,k}-\bn_{l,k-1}).
\]
To ease the notation, we denote its elements  by 
\begin{equation*}\label{eqn:point-acn}
\underline{Z}=((Z_{0,0}, X_{i,j},Y_{l,k}))_{(i,j),(l,k)}\in A_{C,\bn}    
\end{equation*} 
where
$Z_{0,0}\subset C$ is a divisor of length $\bn_{0,0}$ and $X_{i,j}\subset C$ (resp.~$Y_{l,k}\subset C$) is a divisor  of length $ \bn_{i,j}-\bn_{i-1,j}$ (resp.~$\bn_{l,k}-\bn_{l,k-1}$). By \cite[Thm.~2.8]{Mon_double_nested} there exists a locally free sheaf $\CE_{C, \bn}$ on $A_{C, \bn}$ of rank $ \sum_{\substack{(i,j)\in \lambda\\ i,j\geq 1}}(\bn_{i,j}-\bn_{i-1,j-1}) $  whose fibres satisfy
\[
\CE_{C, \bn}|_{\underline{Z}}=\bigoplus_{\substack{(i,j)\in \lambda\\ i,j\geq 1}}H^0(C, \OO_{Y_{i,j}+X_{i,j-1}}(X_{i,j}+Y_{i-1,j})).
\]

\begin{theorem}[{\cite[Thm.~2.8]{Mon_double_nested}}]\label{thm: zero locus}
Let $C$ be a smooth curve and $\bn$ be a reverse plane partition. Then, there exists a section $s$ of $\CE_{C,\bn}$ whose zero locus is isomorphic to $C^{[ \bn]}$.
\[
\begin{tikzcd}[row sep=large,column sep=large]
    &\CE_{C,\bn}  \arrow[d] \\
   C^{[ \bn]}\cong Z(s)\arrow[r, hook]
&A_{C,\bn} \arrow[bend right, swap]{u}{s}
\end{tikzcd}
\]
 \end{theorem}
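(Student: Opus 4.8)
The plan is to exhibit $A_{C,\bn}$ as a fine moduli space of tuples of divisors, to build out of its universal divisors a ``path-reconstructed'' family of nested subschemes satisfying all \emph{but finitely many} of the commutativity constraints of a double nested configuration, and then to choose $s$ so that its vanishing imposes exactly the missing ones; the isomorphism $Z(s)\cong C^{[\bn]}$ is then obtained by comparing functors of points. Concretely, each factor of $A_{C,\bn}$ is a symmetric product $C^{(d)}=\Hilb^d C$ of a smooth curve, hence carries a universal relative effective Cartier divisor, finite flat of degree $d$; pulling these back along the projections gives universal divisors $\mathcal Z_{0,0}$, $\mathcal X_{i,j}$ (for $i\ge1$) and $\mathcal Y_{l,k}$ (for $k\ge1$) in $A_{C,\bn}\times C$, each finite flat over $A_{C,\bn}$. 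For $(i,j)\in\lambda$ I set $\mathcal Z_{i,j}=\mathcal Z_{0,0}+\sum_{1\le a\le i}\mathcal X_{a,0}+\sum_{1\le b\le j}\mathcal Y_{i,b}$, running along the top row via the $\mathcal X$'s and then down column $i$ via the $\mathcal Y$'s. By construction the vertical increments of $(\mathcal Z_{i,j})$ recover the $\mathcal Y_{i,j}$ and the top-row horizontal increments recover the $\mathcal X_{i,0}$, so the only relations of a double nested configuration not yet imposed are that $\mathcal Z_{i,j}-\mathcal Z_{i-1,j}$ be the prescribed effective divisor $\mathcal X_{i,j}$ for $i,j\ge1$; telescoping, this is equivalent to the ``elementary square'' equalities of relative divisors $\mathcal X_{i,j-1}+\mathcal Y_{i,j}=\mathcal X_{i,j}+\mathcal Y_{i-1,j}$ for all $(i,j)\in\lambda$ with $i,j\ge1$, both sides of relative degree $\bn_{i,j}-\bn_{i-1,j-1}$.

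Next I would realise these equalities as the vanishing of a section. Writing $\mathcal D=\mathcal Y_{i,j}+\mathcal X_{i,j-1}$ and $\mathcal D'=\mathcal X_{i,j}+\mathcal Y_{i-1,j}$, let $s_{\mathcal D'}$ be the canonical section of $\OO_{A_{C,\bn}\times C}(\mathcal D')$ whose vanishing locus is $\mathcal D'$; its restriction to $\mathcal D$ is a section of the line bundle $\OO_{\mathcal D}(\mathcal D')$ on the finite flat scheme $\mathcal D$, and pushing forward to $A_{C,\bn}$ (where higher direct images vanish and $\pi_\ast$ commutes with base change, $\mathcal D\to A_{C,\bn}$ being finite flat) gives a section $s_{i,j}$ of a locally free sheaf with fibre $H^0\big(C,\OO_{Y_{i,j}+X_{i,j-1}}(X_{i,j}+Y_{i-1,j})\big)$ at a point $\underline Z$. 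Summing over the inner boxes recovers the bundle $\CE_{C,\bn}$ of the statement, locally free of rank $\sum_{i,j\ge1}(\bn_{i,j}-\bn_{i-1,j-1})$ because $\OO_{\mathcal D}(\mathcal D')$ is a line bundle on $\mathcal D$, finite flat of that degree; and $s=(s_{i,j})$ is the desired section.

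It then remains to identify $Z(s)$ with $C^{[\bn]}$ functorially. A $B$-point of $Z(s)\subset A_{C,\bn}$ is a tuple $(Z_{0,0},(X_{i,j}),(Y_{l,k}))$ of relative divisors on $C\times B$ of the prescribed degrees with all $s_{i,j}=0$. The crucial lemma is that, for relative effective Cartier divisors $\mathcal D,\mathcal D'$ on $C\times B$, the restricted section $s_{\mathcal D'}|_{\mathcal D}$ vanishes identically if and only if there is a scheme-theoretic inclusion between $\mathcal D$ and $\mathcal D'$, and that—$\mathcal D$ and $\mathcal D'$ being finite flat of the same, locally constant, relative degree—such an inclusion forces $\mathcal D=\mathcal D'$, a surjection of locally free $\OO_B$-modules of equal rank being an isomorphism. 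Granting this, ``$s=0$'' is exactly the assertion that all the square equalities hold, hence that the reconstructed $\mathcal Z_{i,j}$ have $\mathcal Z_{i,j}-\mathcal Z_{i-1,j}=\mathcal X_{i,j}$ and so assemble into a $B$-family of configurations of $C$ with underlying $\bn$. Conversely, from such a family one recovers a $B$-point of $Z(s)$ by taking $Z_{0,0}$ together with the horizontal and vertical increments $X_{i,j}=Z_{i,j}-Z_{i-1,j}$ and $Y_{l,k}=Z_{l,k}-Z_{l,k-1}$: on the smooth curve $C$ every inclusion of $0$-dimensional subschemes is an inequality of effective divisors, the inclusion maps are then unique, and the whole configuration is determined by these increments along the spanning path above, so the two assignments are mutually inverse and natural in $B$. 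Thus $Z(s)$ represents $\mathsf{Hilb}^{\bn}(C)$, i.e.\ $Z(s)\cong C^{[\bn]}$.

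The main obstacle is the crucial lemma just invoked: that the canonical section of $\OO_{\mathcal D}(\mathcal D')$ cuts out the locus ``$\mathcal D\subseteq\mathcal D'$'' \emph{scheme-theoretically} and uniformly over an arbitrary base, not merely on closed points. This hinges on the formation of $\OO_{\mathcal D}(\mathcal D')$ and of its canonical section commuting with arbitrary base change, which in turn rests on $\mathcal D\to B$ being finite flat; once base change is secured, the pointwise computation together with the equal-rank argument closes the gap. Everything else—existence of universal divisors on symmetric products of curves, local freeness and the rank count for $\CE_{C,\bn}$, and the spanning-path reconstruction—is routine, and I would only sketch it.
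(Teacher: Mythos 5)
The paper does not prove this statement: it is imported verbatim from \cite[Thm.~2.8]{Mon_double_nested}, and the surrounding text only recalls the ambient space $A_{C,\bn}$ and the fibre description of $\CE_{C,\bn}$. Your construction is correct and is essentially the one of that reference --- the displayed fibres $\bigoplus_{i,j\geq 1}H^0(C,\OO_{Y_{i,j}+X_{i,j-1}}(X_{i,j}+Y_{i-1,j}))$ are exactly the pushforwards of the restricted canonical sections you build, and your reduction of the nesting conditions to the elementary square equalities, together with the equal-degree/base-change argument identifying the two functors of points, closes the argument.
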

We prove now that this section\footnote{We do not need the actual definition of the section $s$ for the proof of this result, but for its general definition  see {\cite[Theorem 2.8]{Mon_double_nested}}.} cuts out $C^{[ \bn]}$ \emph{transversally} (i.e., in the expected codimension) inside the smooth ambient locus $ A_{C,\bn}$. %\textcolor{red}{We remark that the content of the next result is independent  from \Cref{sec: irreducible components} and relies only on \Cref{prop: dimension}.}

\begin{theorem}\label{thm: complete interse}
Let $C$ be an irreducible smooth quasiprojective curve and let $\bn$ be a reverse plane partition with shape $\lambda$, then
\[
\dim  A_{C,\bn} - \rk \CE_{C,\bn}= \omega(\bn).
\]
In particular,  the immersion in  \Cref{thm: zero locus} is regular and $  C^{[\bn]}$ is a local complete intersection and reduced.
\end{theorem}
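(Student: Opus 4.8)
The statement to prove is the dimension count $\dim A_{C,\bn} - \rk \CE_{C,\bn} = \omega(\bn)$, together with the assertion that this forces the immersion $C^{[\bn]} \into A_{C,\bn}$ to be regular, hence $C^{[\bn]}$ a local complete intersection and reduced. The plan is to deduce everything from the equidimensionality statement already established in \Cref{thm:irr components}, reducing the whole result to a combinatorial identity.

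First I would compute the left-hand side. From the definition of $A_{C,\bn}$ one has
\[
\dim A_{C,\bn}=\bn_{0,0}+\sum_{\substack{(i,j)\in\lambda\\ i\geq1}}(\bn_{i,j}-\bn_{i-1,j})+\sum_{\substack{(l,k)\in\lambda\\ k\geq1}}(\bn_{l,k}-\bn_{l,k-1}),
\]
and $\rk\CE_{C,\bn}=\sum_{\substack{(i,j)\in\lambda\\ i,j\geq1}}(\bn_{i,j}-\bn_{i-1,j-1})$. Writing $\bn_{i,j}-\bn_{i-1,j-1}=(\bn_{i,j}-\bn_{i-1,j})+(\bn_{i-1,j}-\bn_{i-1,j-1})$, the rank term telescopes against the two sums in $\dim A_{C,\bn}$, using \Cref{con:zero-outside} to extend $\bn$ by zero outside $\lambda$. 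What should survive is precisely $\sum_{(i,j)\in\lambda}\Delta\bn(i,j)=\omega(\bn)$, by the very definition \eqref{eq:derivative} of the derivative. So the numerical part is a bookkeeping exercise: carefully track which boxes contribute to each sum, handle the boundary boxes (those with $i=0$ or $j=0$) via the zero-extension convention, and observe that the four-term alternating sum of $\bn$ over all of $\lambda$ is exactly the weight. I would present this as a short direct computation, perhaps phrased as "summing \eqref{eq:derivative} over $\lambda$".

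Next, the geometric consequences. Since $C^{[\bn]}\cong Z(s)$ is the zero locus of a section of a locally free sheaf of rank $\rk\CE_{C,\bn}$ on the smooth variety $A_{C,\bn}$, every irreducible component of $Z(s)$ has dimension at least $\dim A_{C,\bn}-\rk\CE_{C,\bn}=\omega(\bn)$ (this is the general lower bound for zero loci of sections, valid locally since $s$ is cut out by $\rk\CE_{C,\bn}$ equations). On the other hand, \Cref{thm:irr components}\ref{irr-cpt-2} tells us $C^{[\bn]}$ is pure of dimension exactly $\omega(\bn)$. Hence every component attains the expected dimension, which means $s$ vanishes in the expected codimension: the immersion is regular and $C^{[\bn]}$ is a local complete intersection. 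Finally, reducedness: a local complete intersection scheme satisfies Serre's condition $(S_1)$ (indeed it is Cohen--Macaulay), so it has no embedded components; combined with generic reducedness from \Cref{cor:gen-red} (the dense open $C^{[\bn]}_{\free}$ is smooth), condition $(R_0)+(S_1)$ gives reducedness.

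The main obstacle is purely organisational rather than deep: getting the telescoping sum exactly right, in particular making sure the boundary boxes with $i=0$ or $j=0$ in the definition of $A_{C,\bn}$ and $\CE_{C,\bn}$ are matched correctly against the derivative formula under the zero-extension convention. There is a genuine subtlety worth flagging explicitly: the lower bound $\dim Z(s)\geq \dim A - \rk\CE$ is a local (not global) statement — it holds on each component because locally $Z(s)$ is defined by $\rk\CE$ equations in a smooth variety — and it is the combination of this inequality with the \emph{equality} from \Cref{thm:irr components} that upgrades "lci of the right dimension somewhere" to "regular immersion everywhere". I would make sure to invoke \Cref{thm:irr components} at precisely that point.
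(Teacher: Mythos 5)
Your proposal is correct and the geometric half (combining the numerical identity with the purity statement of \Cref{thm:irr components} to force the codimension to equal $\rk\CE_{C,\bn}$, hence a regular immersion and a local complete intersection, then $(S_1)$ from Cohen--Macaulayness plus $(R_0)$ from the smoothness of the free locus to get reducedness) is exactly the paper's argument. The only divergence is in the verification of the identity $\dim A_{C,\bn}-\rk\CE_{C,\bn}=\omega(\bn)$: the paper proceeds by induction on $\lvert\lambda\rvert$, adding one box at a time and distinguishing boundary boxes from interior ones, whereas you telescope the sums directly via the splitting $\bn_{i,j}-\bn_{i-1,j-1}=(\bn_{i,j}-\bn_{i,j-1})+(\bn_{i,j-1}-\bn_{i-1,j-1})$ and the zero-extension convention; both are routine and your version checks out, so this is a cosmetic rather than substantive difference.
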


\begin{proof}
We prove the first claim by induction on the size of $\lambda$. If $\lvert \lambda\rvert=1$, then $\rk \CE_{C,\bn}=0$ and the claim trivially holds. Assume now that the claim holds for a partition  $\lambda$  and consider the partition $\widetilde{\lambda}$ obtained by $\lambda$ by adding a box $\Box\in\BN^2$. Then, any reverse plane partition $\widetilde{\bn}\in\BRPP^{\widetilde{\lambda}} $ is obtained by a reverse plane partition $\bn\in\BRPP^\lambda $ by labelling the extra box $\Box\in\widetilde{\lambda}\smallsetminus \lambda$ by $\widetilde{\bn}_\Box$. \\
\emph{Case I.} If $\Box = (i,0)$ (the case $(0,j)$ is analogous), then 
\begin{align*}
     A_{C,\widetilde{\bn}} &\cong  A_{C,\bn}  \times C^{(\bn_{i,0}-\bn_{i-1,0})},\\
     \rk  \CE_{C,\widetilde{\bn}}&= \rk \CE_{C,\bn},
\end{align*}
which implies, by the inductive hypothesis, that
\begin{align*}
  \dim  A_{C,\widetilde{\bn}} - \rk \CE_{C,\widetilde{\bn}}&=\omega(\bn) + \bn_{i,0}-\bn_{i-1,0}\\
  &= \omega(\widetilde{\bn}).
\end{align*}
\emph{Case II.} If $\Box = (i,j)$ with $i,j\geq 1$, then
\begin{align*}
     A_{C,\widetilde{\bn}} &\cong  A_{C,\bn}  \times C^{(\bn_{i,j}-\bn_{i-1,j})}\times C^{(\bn_{i,j}-\bn_{i,j-1})},\\
     \rk  \CE_{C,\widetilde{\bn}}&= \rk \CE_{C,\bn}+\bn_{i,j}-\bn_{i-1, j-1},
\end{align*}
which implies by the inductive hypothesis
\begin{align*}
  \dim  A_{C,\widetilde{\bn}} - \rk \CE_{C,\widetilde{\bn}}&=\omega(\bn) + \bn_{i,j}-\bn_{i-1,j} +\bn_{i,j}-\bn_{i,j-1}-\bn_{i,j}+\bn_{i-1, j-1}\\
  &= \omega({\widetilde{\bn}}).
\end{align*}
The fact that $\dim  A_{C,\bn} - \rk \CE_{C,\bn}= \omega(\bn)$, combined with \Cref{thm:irr components},
 implies that the codimension  of the immersion $C^{[ \bn]}\hookrightarrow A_{C,\bn}$ agrees with $\rk \CE_{C,\bn}$, therefore, the immersion is regular and $C^{[\bn]} $ is a local complete intersection.  In particular, it is Cohen--Macaulay and  it satisfies Serre's property ($\mathrm{S}_1$). On the other hand, $C^{[\boldit{n}]}$ satisfies Serre's property ($\mathrm{R}_0$), i.e., it is regular in codimension $0$, by \Cref{prop:conn-comp-free}. Therefore, it is reduced by \cite[\href{https://stacks.math.columbia.edu/tag/033P}{Tag 033P}]{stacks-project}.
\end{proof}

%\begin{corollary}\label{prop:reduced}
%\textcolor{blue}{removed corollary}
%Let $C$ be a\todo{merge with 4.2} smooth curve. Let $\lambda$ be a partition, and fix $\bn \in \BRPP^\lambda$. Then the double nested Hilbert scheme $C^{[\boldit{n}]}$ is reduced.
%\end{corollary}

%\begin{proof}
%The scheme $C^{[\boldit{n}]}$ is a local complete intersection by \Cref{thm: complete interse}, so, in particular, it is Cohen--Macaulay and  it satisfies Serre's property ($\mathrm{S}_1$). On the other hand, $C^{[\boldit{n}]}$ satisfies Serre's property ($\mathrm{R}_0$), i.e., it is regular in codimension $0$, by \Cref{prop:conn-comp-free}. Therefore, it is reduced by \cite[\href{https://stacks.math.columbia.edu/tag/033P}{Tag 033P}]{stacks-project}.
%\end{proof}

%%%%%%%%%%%%%%%%%%%%%%%%%%%%%%%%%%%%%%%%%%%%%%%%%%%%%%%%%%%%% 
\subsection{Virtual fundamental class}\label{sec: virtual fundamental}
A \emph{perfect obstruction theory} \cite{BF_normal_cone} on a scheme $X$ is the datum of a morphism 
\[
\begin{tikzcd}
\phi\colon \BE\arrow{r} & \BL_X
\end{tikzcd}
\]
in the derived category $\derived^{[-1,0]}(X)$, where $\BE$ is a perfect complex of perfect amplitude contained in $[-1,0]$, such that $h^0(\phi)$ is an isomorphism and $h^{-1}(\phi)$ is surjective. Here, $\BL_X = \tau_{\geq -1}L_X^\bullet$ is the cut-off at $-1$ of the full cotangent complex $L_X^\bullet \in \derived^{\leq 0}(X)$.
The \emph{virtual dimension} of $X$ with respect to $(\BE,\phi)$ is the integer $\vd  = \rk \BE$. This is just $\rk E^0 - \rk E^{-1}$ if one can write $\BE = [E^{-1}\to E^0]$, for two locally free sheaves $E^{-1}$ and $E^0$. A perfect obstruction theory determines a cone
\[
\begin{tikzcd}
\mathfrak C \arrow[hook]{r} &  E_1 = (E^{-1})^*.
\end{tikzcd}
\]
Letting $\iota\colon X \into E_1$ be the zero section of the vector bundle $E_1$, the induced \emph{virtual fundamental class} on $X$ is the refined intersection
\[
[X]^{\vir} = \iota^![\mathfrak C]\, \in\, A_{\vd}(X). 
\]
Thanks to \Cref{thm: zero locus}, $ C^{[ \bn]}$ inherits a perfect obstruction theory
\begin{equation*}\label{symmetric_POT_quot}
\begin{tikzpicture}[baseline=(current  bounding  box.center)]
\node (N1) at (-1.98,0.95) {$\mathbb E$};
\node (N2) at (-1.1,0.94) {$=$};
\node (N3) at (-1.97,-0.88) {$\BL_{C^{[ \bn]}}$};
\node (N4) at (-1.1,-0.88) {$=$};
\node (O1) at (0.1,0.93) {$\big[\CE_{C,\bn}^*\big|_{C^{[ \bn]}}$};
\node (O2) at (2.99,0.93) {$\Omega_{A_{C,\bn}}\big|_{C^{[ \bn]}}\big]$};
\node (O3) at (0.1,-0.88) {$\big[ \mathscr{I}/\mathscr{I}^2$};
\node (O4) at (2.99,-0.88) {$\Omega_{A_{C,\bn}}\big|_{C^{[ \bn]}}\big]$};
\path[commutative diagrams/.cd, every arrow, every label]
(N1) edge node[swap] {$\phi$} (N3)
(O1) edge node {$\dd s^*$} (O2)
(O1) edge node[swap] {$s^*$} (O3)
(O3) edge node {$\dd$} (O4)
(O2) edge node {$\mathrm{id}$} (O4);
\end{tikzpicture}
\end{equation*}
where $\mathscr{I}$ is the ideal sheaf of the immersion $C^{[ \bn]}\hookrightarrow A_{C,\bn}$, 
and, therefore, a virtual fundamental class $[C^{[ \bn]}]^{\vir}\in A_{\vd}(C^{[ \bn]})$, where, in this case, one has $\vd = \dim A_{C,\bn} - \rk \CE_{C,\bn}$.

\begin{corollary}\label{cor:VFC=FC}
There is an identity 
\[
\bigl[C^{[ \bn]}\bigr]^{\vir}=\bigl[C^{[\bn]}\bigr]\in A_{\omega(\bn)}(C^{[ \bn]}).
\]
\end{corollary}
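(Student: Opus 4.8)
The plan is to show that the section $s$ cutting out $C^{[\bn]}$ being \emph{regular} forces the cone appearing in the virtual class construction to be the full expected normal bundle, so that the refined Gysin map returns the ordinary fundamental class. Write $X = C^{[\bn]}$, $A = A_{C,\bn}$, $\CE = \CE_{C,\bn}$, let $\mathscr I$ be the ideal sheaf of $X \hookrightarrow A$, and let $s \in \Gamma(A,\CE)$ be the section of \Cref{thm: zero locus}. The perfect obstruction theory recalled above is $\phi\colon \BE = [\CE^*|_X \to \Omega_A|_X] \to \BL_X = [\mathscr I/\mathscr I^2 \to \Omega_A|_X]$, so $E_1 = (E^{-1})^* = \CE|_X$, and the cone $\mathfrak C \hookrightarrow E_1$ entering $[X]^{\vir} = \iota^![\mathfrak C]$ is the normal cone $C_{X/A}$, embedded into $\CE|_X = \Spec\Sym\CE^*|_X$ through the surjection $\CE^*|_X \twoheadrightarrow \mathscr I/\mathscr I^2$ which realises $h^{-1}(\phi)$.

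The first key input is \Cref{thm: complete interse}: it gives $\dim A - \rk\CE = \omega(\bn)$, so --- combined with \Cref{thm:irr components} --- the closed immersion $X \hookrightarrow A$ is regular of codimension exactly $\rk\CE$. I would then extract two consequences. First, $\mathscr I/\mathscr I^2$ is locally free of rank $\rk\CE$; hence the surjection $\CE^*|_X \twoheadrightarrow \mathscr I/\mathscr I^2$ is a surjection of locally free sheaves of equal rank, therefore an isomorphism. Second, for a regular immersion the normal cone equals the normal bundle, $C_{X/A} = N_{X/A} = \Spec\Sym(\mathscr I/\mathscr I^2)$, which under the identification above is all of $E_1 = \CE|_X$. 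Thus $\mathfrak C = E_1$; and since $X$ is reduced of pure dimension $\omega(\bn)$ (\Cref{thm: complete interse} and \Cref{thm:irr components}), the total space $E_1$ is reduced and equidimensional of dimension $\omega(\bn) + \rk\CE = \dim A$, so $[\mathfrak C] = [E_1]$ as a cycle on $E_1$.

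It then remains to recall the elementary identity $\iota^![E_1] = [X]$ for the zero section $\iota\colon X \hookrightarrow E_1$ of a vector bundle (equivalently, $\iota^! \circ \pi^* = \mathrm{id}$ on $A_*(X)$, where $\pi\colon E_1 \to X$ is the projection, applied to $[X]$ using $\pi^*[X] = [E_1]$). Chaining everything,
\[
[C^{[\bn]}]^{\vir} = \iota^![\mathfrak C] = \iota^![E_1] = [X] = [C^{[\bn]}] \in A_{\omega(\bn)}(C^{[\bn]}).
\]
The only point requiring care is bookkeeping: tracking the duals relating $\CE$, $E^{-1}$, $E_1 = (E^{-1})^*$ and $\mathscr I/\mathscr I^2$, and verifying that the cone of the obstruction theory $\BE$ really is the pushforward of $C_{X/A}$ --- this is the standard situation of an obstruction theory arising from a section of a vector bundle (the basic example of Behrend--Fantechi), and once set up correctly the argument is formal. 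As a sanity check, regularity of $s$ makes $\phi$ a quasi-isomorphism --- both complexes equal $[\mathscr I/\mathscr I^2 \to \Omega_A|_X]$, the full cotangent complex of the lci scheme $X$ --- and an obstruction theory equal to the truncated cotangent complex always yields the ordinary fundamental class.
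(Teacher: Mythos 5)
Your proposal is correct and rests on the same key observation as the paper's proof: regularity of the immersion $C^{[\bn]}\hookrightarrow A_{C,\bn}$ (from \Cref{thm: complete interse}) makes $\mathscr{I}/\mathscr{I}^2$ locally free of rank $\rk\CE_{C,\bn}$, so the surjection $s^*\colon \CE_{C,\bn}^*|_{C^{[\bn]}}\twoheadrightarrow \mathscr{I}/\mathscr{I}^2$ is an isomorphism. The paper then concludes immediately by the remark you relegate to a ``sanity check'' --- namely that $\phi$ is then an isomorphism $\BE\simto\BL_{C^{[\bn]}}$, hence the virtual class is the ordinary fundamental class --- whereas you reach the same conclusion by explicitly identifying the cone $\mathfrak C$ with all of $E_1$ and applying $\iota^![E_1]=[X]$; both routes are valid and essentially equivalent.
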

\begin{proof}
Since the immersion $C^{[ \bn]}\hookrightarrow A_{C,\bn}$ is regular, we have that $\mathscr{I}/\mathscr{I}^2 $ is a locally free sheaf of rank $\rk \CE_{C,\bn} $ (see e.g. \cite[Lemma 6.3.6]{Liu_AG}), which readily implies that the surjection 
\[
\begin{tikzcd}
\CE_{C,\bn}^*\big|_{C^{[\bn]}} \arrow[two heads]{r}{s^\ast} & \mathscr{I}/\mathscr{I}^2
\end{tikzcd}
\]
is an isomorphism. Therefore, the perfect obstruction theory on $C^{[\bn]}$ is an isomorphism $\phi\colon\BE\simto\BL_{C^{[ \bn]}}$, which implies that the virtual fundamental class coincides with the naive fundamental class.
\end{proof}

The statement of \Cref{cor:VFC=FC} implies the following. If $C$ is projective of genus $g$, then, for any cohomology class $\alpha \in \HH^\ast(C^{[\bn]},\BQ)$, there is an identity
\[
\int_{[C^{[\bn]}]^{\vir}} \alpha = \int_{C^{[\bn]}}\alpha\,\in\,\BQ.
\]
Moreover, as in \cite[Sec.~5.3]{Mon_double_nested}, choosing suitably large $N_i\gg 0$, there are morphisms
\[C^{[\bn]}\hookrightarrow \prod_{i}C^{(N_i)}\to \prod_{i}\mathrm{Pic}^{N_i}(C), \]
where the first map is a regular embedding (by \Cref{thm: complete interse}) and the second one is a product of Abel--Jacobi maps, each of which is a projective bundle. Informally, this means that virtual intersection theory on $C^{[\bn]}$ agrees with standard intersection theory, and that it can be expressed as intersection theory on Jacobians of $C$, which are all diffeomorphic to a  $g$-dimensional torus (when $\bfk=\BC$).

%%%%%%%%%%%%%%%%%%%%%%%%%%%%%%%%%%%%%%%%%%%%%%%%%%%%%%%%%%%%% 
\subsection{Local equations}\label{sec:local-eqns}
 In this subsection, we work \'etale-locally and we assume $C=\mathbb{A}^1$.
 We look for local equations of the double nested Hilbert scheme $(\mathbb{A}^1)^{[\bn]}$. We provide equations of two different closed immersions of the double nested Hilbert scheme $(\mathbb{A}^1)^{[\bn]}$ into (different) smooth quasiprojective varieties. Precisely, the first is the natural immersion in the product of the Hilbert schemes $(\BA^1)^{(\bn_{i,j})}$ for $(i,j)\in\lambda$, and the second is inspired by \Cref{subsec:zero-locus-description}.

\medskip

\paragraph{\it Type I} The first type of equations describes the nested Hilbert scheme $(\mathbb{A}^1)^{[\bn]}$ as a closed subscheme of the product of classical Hilbert schemes $\prod_{(i,j) \in \lambda} (\mathbb{A}^1)^{(\bn_{i,j})}$. Each factor $(\mathbb{A}^1)^{(\bn_{i,j})}$ is isomorphic to the affine space $\mathbb{A}^{\bn_{i,j}} = \Spec \mathbf{k}[a_{i,j,k}]_{k=1}^{\bn_{i,j}}$ where the coordinates $a_{i,j,k}$ identify the coefficients of the monic polynomial of degree $\bn_{i,j}$ 
\begin{equation}\label{eq: polinomi tipo vertex}
P_{i,j}(x) = x^{\bn_{i,j}} + \sum_{k=1}^{\bn_{i,j}} a_{i,j,k}\, x^{\bn_{i,j} - k}
\end{equation}
defining the universal family in $(\mathbb{A}^1)^{(\bn_{i,j})} \times \mathbb{A}^1$. The double nested Hilbert scheme $(\mathbb{A}^1)^{[\bn]}$ can be realised as a subscheme of the affine space 
\[
\prod_{(i,j) \in \lambda} \mathbb{A}^{\bn_{i,j}} \simeq \mathbb{A}^{\lvert \bn \rvert} = \Spec \mathbf{k}[a_{i,j,k}]_{\begin{subarray}{l}
    (i,j) \in \lambda\\1\le k\le \bn_{i,j}
\end{subarray}}
\] 
by imposing the divisibility between polynomials corresponding to adjacent boxes. Namely, for all $(i,j)\in \lambda$,
\begin{equation}\label{eq: type i}
    \begin{split}  
    &P_{i,j}(x) \equiv 0 \mod P_{i-1,j}(x),\quad \text{if~}i>0,\\ 
    &P_{i,j}(x) \equiv 0 \mod P_{i,j-1}(x), \quad \text{if~}j>0.
    \end{split}
\end{equation}

Notice that, for each box $(i,j) \in \lambda$, we impose $\bn_{i-1,j} + \bn_{i,j-1} - \bn_{i-1,j-1}$ conditions on the polynomial $P_{i,j}(x)$. This is a consequence of the inclusion-exclusion principle. Indeed, if $i>0$ and $j>0$, the roots of $P_{i-1,j}(x)$ and $P_{i,j-1}(x)$ are roots of $P_{i,j}(x)$ as well, but the roots of $P_{i-1,j-1}(x)$ are counted twice (if $i=0$ or $j=0$, the formula is still true recalling \cref{con:zero-outside}). This is a different way to show that
\begin{align*}
\dim (\mathbb{A}^1)^{[\bn]} &\geq 
 \dim \mathbb{A}^{\lvert \bn \rvert} - \sum_{(i,j)\in \lambda} \left(\bn_{i-1,j} + \bn_{i,j-1} - \bn_{i-1,j-1}\right) \\
&= \sum_{(i,j)\in \lambda} \left(\bn_{i,j} - \bn_{i-1,j} - \bn_{i,j-1} + \bn_{i-1,j-1}\right) = \lvert \Delta\bn \rvert = \omega(\bn),
\end{align*}
as already done in the proof of \cref{thm: complete interse}.

\medskip

\paragraph{\it Type II} Instead of giving the universal family of configurations, one can consider the collection of families parametrising differences of subschemes corresponding to adjacent boxes in a configuration. This viewpoint leads to the realisation of the double nested Hilbert scheme $(\mathbb{A}^1)^{[\bn]}$ as a closed subscheme of the following product of classical Hilbert schemes
\[
\prod_{(i,j)\in\lambda} \left( (\mathbb{A}^1)^{(\bn_{i,j}-\bn_{i-1,j})} \times (\mathbb{A}^1)^{(\bn_{i,j}-\bn_{i,j-1})}\right).
\]
In fact, this interpretation is (almost) the one given in \Cref{subsec:zero-locus-description}. 

Let us denote by $b_{i,j,k}$, for $k = 1,\ldots,\bn_{i,j}-\bn_{i-1,j}$, and $c_{i,j,k}$, for $k = 1,\ldots,\bn_{i,j}-\bn_{i,j-1}$, the coordinates of $(\mathbb{A}^1)^{(\bn_{i,j}-\bn_{i-1,j})} \times (\mathbb{A}^1)^{(\bn_{i,j}-\bn_{i,j-1})} \cong \mathbb{A}^{\bn_{i,j}-\bn_{i-1,j}}\times \mathbb{A}^{\bn_{i,j}-\bn_{i,j-1}}$. Then, for all $(i,j) \in \lambda$, the following polynomials
\begin{equation}\label{eq: polinomi tipo edge}
\begin{split}
L_{i,j}(x) &{}= x^{\bn_{i,j}-\bn_{i-1,j}} + \sum_{k=1}^{\bn_{i,j}-\bn_{i-1,j}} b_{i,j,k}\, x^{\bn_{i,j}-\bn_{i-1,j} - k},\\
U_{i,j}(x) &{}= x^{\bn_{i,j}-\bn_{i,j-1}} + \sum_{k=1}^{\bn_{i,j}-\bn_{i,j-1}} c_{i,j,k}\, x^{\bn_{i,j}-\bn_{i,j-1} - k},
\end{split}
\end{equation}
are the equations of the universal families of $(\mathbb{A}^1)^{(\bn_{i,j}-\bn_{i-1,j})} $ and $ (\mathbb{A}^1)^{(\bn_{i,j}-\bn_{i,j-1})}$ respectively. We interpret them as the families $\mathcal{Z}_{i,j} - \mathcal{Z}_{i,j-1}$ and $\mathcal{Z}_{i,j} - \mathcal{Z}_{i-1,j}$ of differences of universal divisors corresponding to adjacent boxes of a configuration with reverse plane partition $\bn$. Now, to obtain the equations defining $(\mathbb{A}^1)^{[\bn]}$, we have to impose, for all $(i,j)\in\lambda$, the condition coming from the nesting, i.e., that the difference $\mathcal{Z}_{i,j} - \mathcal{Z}_{i-1,j-1}$ is given by
\[
(\mathcal{Z}_{i,j}- \mathcal{Z}_{i,j-1} ) + (\mathcal{Z}_{i,j-1} - \mathcal{Z}_{i-1,j-1})= (\mathcal{Z}_{i,j}- \mathcal{Z}_{i-1,j} ) + (\mathcal{Z}_{i-1,j} - \mathcal{Z}_{i-1,j-1}).
\]
This leads, for all $(i,j)\in\lambda$, to the equations
\begin{equation}\label{eq: type ii}
    L_{i,j}(x) U_{i-1,j}(x) = U_{i,j}(x) L_{i,j-1}(x),
\end{equation}
with the convention that $U_{-1,j}(x) = L_{i,-1}(x) = 1$.
These polynomials have degree $\bn_{i,j} - \bn_{i-1,j-1}$, thus, we are imposing $\bn_{i,j} - \bn_{i-1,j-1}$ conditions on the coordinates $b_{i,j,k}$ and $c_{i,j,k}$.
Again, one has
\begin{align*}
\dim (\mathbb{A}^1)^{[\bn]} 
&\geq 
\sum_{(i,j)\in \lambda} \left(2\bn_{i,j}-\bn_{i-1,j}-\bn_{i,j-1}\right) - \sum_{(i,j)\in \lambda} \left(\bn_{i,j} - \bn_{i-1,j-1}\right)\\
&= \sum_{(i,j)\in \lambda} \left(\bn_{i,j}-\bn_{i-1,j}-\bn_{i,j-1} + \bn_{i-1,j-1}\right) = \lvert \Delta\bn \rvert = \omega(\bn).
\end{align*}

From a computational viewpoint, 
this approach can be made more efficient, since the  polynomials $L_{i,0}(x)$ and $U_{0,j}(x)$ are not really necessary (cf.~\cref{subsec:zero-locus-description}). Hence, $(\mathbb{A}^1)^{[\bn]}$ may be embedded in a smaller affine scheme.% (cf.~\Cref{app:computations}).

\begin{remark}\label{rk:equations-degree}
In both constructions, the ideal defining the double nested Hilbert scheme is homogeneous with respect to the nonstandard positive grading
\begin{equation}\label{eq:degree}
\begin{array}{rl}
\textit{(Type I)} & \deg a_{i,j,k} = k\\
\textit{(Type II)} &\deg b_{i,j,k} = \deg c_{i,j,k} = k
\end{array}\qquad \forall\ (i,j) \in \lambda.
\end{equation}
Indeed, the equations defining $(\mathbb{A}^1)^{[\bn]}$  come from algebraic manipulation of the polynomials in \eqref{eq: polinomi tipo vertex} and \eqref{eq: polinomi tipo edge} which are all homogeneous with respect to $\deg x = 1$ and the degrees in \eqref{eq:degree}. Therefore, $(\mathbb{A}^1)^{[\bn]}$ has a  $\mathbf{k}^\star$-action, defined on the coordinates of the ambient spaces by
\[
\begin{array}{rl}
\textit{(Type I)} & \ \ t \curvearrowleft a_{i,j,k} = t^k a_{i,j,k}\\
\textit{(Type II)} & \begin{array}{l} t\curvearrowleft b_{i,j,k} = t^k b_{i,j,k},\\[-3pt]  t\curvearrowleft c_{i,j,k} = t^k c_{i,j,k}\end{array}
\end{array}
 \qquad \forall\ (i,j) \in \lambda,\ \forall\ t \in \mathbf{k}^\star.
\]
See \cite[Thm.~3.2]{FerrareseRoggero} for a general description of the properties of this type of varieties and \cite[Sec.~4]{LellaRoggero} for an application to classical Hilbert schemes. 
One of the main feature of this kind of varieties is that they can be embedded in the tangent space at the origin. This implies that the dimension of the tangent space of $(\mathbb{A}^1)^{[\bn]}$ at the origin is the maximum possible. In particular, this gives the smallest affine space in which $(\mathbb{A}^1)^{[\bn]}$ can be embedded.
\end{remark}

%%%%%%%%%%%%%%%%%%%%%%%%%%%%%%%%%%%%%%%%%%%%%%%%%%%%%%%%%%%%%
%%%%%%%%%%%%%%%%%%%%%%%%%%%%%%%%%%%%%%%%%%%%%%%%%%%%%%%%%%%%%
\section{The motive of the double nested Hilbert scheme}
In this section, we 
%revisit the theory of power structures, and we 
prove a general structural result (\Cref{thm:power-structure-doublenested}), which will easily imply \Cref{thm:main-C} from the introduction. In order to achieve this, we recall the (multiple variable) power structure over $K_0(\Var_{\bfk})$ and its `geometric description' following \cite{BM15,GLMps,GLMps2}. We refer the reader to \cite{DL_geometry_arc_spaces} for background on the Grothendieck ring of varieties.

\begin{notation}
Just for this section, we adopt a different notation for partitions, in order to be consistent with the literature on power structures. Let $n$ be a positive integer. We write a partition $\alpha \vdash n$ as a tuple $\alpha = (1^{\alpha_1}2^{\alpha_2}\cdots n^{\alpha_n})$, meaning that $\alpha$ has $\alpha_i$ parts of length $i$ for every $i$ (ranging from $1$ to $n$). Of course $\alpha_i$ may be $0$ for several values of $i$. In this language, the size of $\alpha$ is $n = \lvert \alpha \rvert = \sum_ii\alpha_i$. We also set $|| \alpha || = \sum_{i} \alpha_i$, representing the number of parts of $\alpha$. We write $G_\alpha$ for the automorphism group $ \prod_i\FS_{\alpha_i}$ of $\alpha$, where $\FS_b$ denotes the symmetric group on $b$ elements.
\end{notation}

%%%%%%%%%%%%%%%%%%%%%%%%%%%%%%%%%%%%%%%%%%%%%%%%%%%%%%%%%%%%% 
\subsection{Power structures}\label{subsec:PS}
In this subsection, we recall the notion of power structure on a ring, following \cite{GLMps,GLMps2} closely. We refer the reader to \cite{DavisonR} for the $\lambda$-structure on $K_0(\Var_{\bfk})$, in more a general setting.

%\smallbreak
%Let $R$ be a ring. A power structure on $R$ is, roughly speaking, a way to raise a power series
%\[
%A(t) = 1 + \sum_{n>0} a_n t^n \in 1+tR \llbracket t \rrbracket 
%\]
%to an element $m \in R$ of the ring itself, in such a way that the result $A(t)^m$ is again a power series with $R$-coefficients starting with $1$, and that the standard properties of the operation `raising to a power' are satisfied. The formal definition is as follows.

\begin{definition}[{\cite{GLMps}}]\label{def:power-structure}
A power structure on a ring $R$ is a map
\[
\begin{tikzcd}
(1+tR\llbracket t \rrbracket) \times R \arrow{r} &  1+tR\llbracket t \rrbracket,\quad (A(t),m) \mapsto A(t)^m,
\end{tikzcd}
\]
such that the following conditions are satisfied:
\begin{enumerate}
    \item [\mylabel{ps-i}{(i)}] $A(t)^1=A(t)$,
    \item [\mylabel{ps-ii}{(ii)}] $(A(t)B(t))^m = A(t)^m B(t)^m$,
    \item [\mylabel{ps-iii}{(ii)}] $A(t)^{m+m'} = A(t)^mA(t)^{m'}$,
    \item [\mylabel{ps-iv}{(iv)}] $A(t)^{mn} = (A(t)^{m})^n$,
    \item [\mylabel{ps-v}{(v)}] $(1+t)^m \equiv 1+mt\,(\bmod~t^2)$,
    \item [\mylabel{ps-vi}{(vi)}] $A(t^k)^m = A(t)^m|_{t \mapsto t^k}$.
\end{enumerate}
\end{definition}

%By \cite[Prop.~1]{GLMps2}, to give a finitely determined\footnote{This notion is defined in \cite{GLMps2}. All power structures in this paper are finitely determined.} power structure, it is enough to define the series
%\[
%B(t)^m, \quad m \in R,
%\]
%for those series $B(t)$ such that $B(t) \equiv 1+t\,(\bmod~t^2)$, and check the axioms \ref{ps-iii} and \ref{ps-v} for such series.

\begin{example}\label{ex:ps}
We have the following examples of power structures.
\begin{itemize}
    \item [\mylabel{PS1}{(1)}] On $R=\BZ$, for any $m \in \BN$ we may set 
    \[
\left(1+\sum_{n>0}c_nt^n\right)^m
 = 1+\sum_{n>0}\left(\sum_{\alpha \vdash n} \prod_{k=0}^{|| \alpha ||-1} (m-k) \frac{\prod_{i} c_i^{\alpha_i}}{\prod_{i}\alpha_i!}\right)t^n.
    \]
    \item [\mylabel{PS2}{(2)}] On $R=\BZ[u,v]$, we may set 
    \[
    \left(1-t\right)^{-f} = \prod_{i,j} \,\left(1-u^iv^jt\right)^{-p_{ij}}
    \]
    if $f = \sum_{i,j} p_{ij} u^iv^j \in R$. This extends uniquely to a power structure \cite{GLMps2}. The interest in this ring lies in the fact that it is the value ring for the Hodge--Deligne polynomial of complex algebraic varieties.
    \item [\mylabel{PS3}{(3)}] Set $R=K_0(\Var_\bfk)$. If $(Y_n)_{n>0}$ is a sequence of varieties and $X$ is a variety, we set
    \[
\left(1+\sum_{n>0}\,[Y_n]t^n\right)^{[X]} = \sum_{n\geq 0}t^{n}\sum_{\alpha\vdash n}\,\left[\left(\prod_i X^{\alpha_i} \setminus \Delta\right) \times^{G_\alpha} \prod_i Y_i^{\alpha_i}\right],
    \]
where $G_{\alpha}$ acts diagonally, by permutation factor by factor, and $\Delta$ is the big diagonal, consisting of $||\alpha||$-tuples of points in $X$ such that at least two entries coincide.
This formula extends uniquely to a power structure by \cite[Thm.~2]{GLMps}.% By its `geometric description' (recalled in \Cref{sec:geom-description}), this power structure is the perfect motivic counterpart of \ref{PS1}. 
\end{itemize}
\end{example}

\begin{remark}
For any variety $X$, we have
\[
(1-t)^{-[X]} = \sum_{n\geq 0}\,\bigl[X^{(n)}\bigr]t^n = \zeta_X(t),
\]
the Kapranov motivic zeta function of $X$, see \cite{Kapranov_rational_zeta}.
\end{remark}

\subsubsection{Multiple variable power structure on \texorpdfstring{$K_0(\Var_{\bfk})$}{K\_0(Var)}}\label{sec:multiple-ps}
We explain how to extend  \Cref{ex:ps}\,\ref{PS3} to the multiple variable setting, following \cite{GLMps2}. Let $\boldit{t} = (t_1,\ldots,t_e)$ be a finite set of indeterminates. We use the multi-index notation
\[
\boldit{t}^{\bn} = t_1^{n_1} \cdots t_e^{n_e}
\]
throughout, for $\bn = (n_1,\ldots,n_e)$ a tuple of integers $n_i \in \BZ_{\geq 0}$. Consider the power series ring $K_0(\Var_{\bfk}) \llbracket \boldit{t} \rrbracket = K_0(\Var_{\bfk}) \llbracket t_1,\ldots,t_e\rrbracket$, along with its ideal $\Fm = (t_1,\ldots,t_e) \subset K_0(\Var_{\bfk}) \llbracket \boldit{t} \rrbracket$. Thanks to the power structure of \Cref{ex:ps}\,\ref{PS3}, one can make sense of the powers $A(\boldit{t})^m$ for every $A(\boldit{t})\in 1+\Fm K_0(\Var_{\bfk}) \llbracket \boldit{t} \rrbracket$ and every $m \in K_0(\Var_{\bfk})$.  %every series
%\[
%A(\boldit{t}) = 1+\sum_{\bn\in \BZ_{\geq 0}^{e} \setminus \boldit{0}} a_{\bn} \boldit{t}^{\bn} \in 1+\Fm K_0(\Var_{\bfk}) \llbracket \boldit{t} \rrbracket
%\]
%can be uniquely written in the form
%\[
%A(\boldit{t}) = \prod_{\bn\in \BZ_{\geq 0}^{e} \setminus \boldit{0}} \left(1-\boldit{t}^{\bn}\right)^{-b_{\bn}}, \quad b_{\bn} \in K_0(\Var_{\bfk}).
%\]
%Therefore, the power $A(\boldit{t})^m$ makes sense for all $m \in K_0(\Var_{\bfk}) $ through the product expansion
%\[
%A(\boldit{t})^m = \prod_{\bn \in \BZ_{\geq 0}^{e} \setminus \boldit{0}} \left(1-\boldit{t}^{\bn}\right)^{-mb_{\bn}}.
%\]
As in the single variable case, there is an explicit formula in the effective case, which goes as follows. Given a sequence of varieties $Y_{\bn}$ indexed by elements $\bn \in \BZ_{\geq 0}^{e} \setminus \boldit{0}$, and a variety $X$, by \cite[Eqn.~(3)]{GLMps2} one has the identity
\begin{equation}\label{eqn:multiple-ps-coef}
\left(1 + \sum_{\bn \in \BZ_{\geq 0}^{e} \setminus \boldit{0}} \,[Y_{\bn}] \boldit{t}^{\bn}\right)^{[X]} = 1+\sum_{\bn \in \BZ_{\geq 0}^{e} \setminus \boldit{0}} \boldit{t}^{\bn} \sum_{\substack{\boldit{k}\in \CB \\ \sum_{\boldit{i}}\boldit{i} k_{\boldit{i}} = \bn}} \left[ \left(\prod_{\boldit{i} \,|\,k_{\boldit{i}} \neq 0} X^{k_{\boldit{i}}} \setminus \Delta \right) \times^{G_{\boldit{k}}}  \prod_{\boldit{i} \,|\,k_{\boldit{i}} \neq 0} Y_{\boldit{i}}^{k_{\boldit{i}}}\right]
\end{equation}
in $1+\Fm K_0(\Var_\bfk) \llbracket \boldit{t} \rrbracket$, where 
\[
\CB = \Set{(k_{\boldit{i}})_{\boldit{i} \in \BZ_{\geq 0}^{e} \setminus \boldit{0}} | k_{\boldit{i}} \in \BZ_{\geq 0},\,k_{\boldit{i}} = 0 \mbox{ for almost all }\boldit{i}} \subset \mathrm{Map}(\BZ_{\geq 0}^{e} \setminus \boldit{0},\BZ_{\geq 0})
\]
and each $\boldit{k} = (k_{\boldit{i}})_{\boldit{i} \in \BZ_{\geq 0}^{e} \setminus \boldit{0}} \in \CB$ determines an `automorphism group' 
\[
G_{\boldit{k}} = \prod_{\boldit{i} \,|\,k_{\boldit{i}} \neq 0} \FS_{k_{\boldit{i}}},
\]
acting diagonally on the two products in \eqref{eqn:multiple-ps-coef} just as in the single variable case.

\subsubsection{The geometric description of the multiple variable power structure on \texorpdfstring{$K_0(\Var_{\bfk})$}{K\_0(Var)}}\label{sec:geom-description}
Full details for this subsection can be found in the original papers \cite{GLMps,GLMps2} and in \cite[Sec.~2]{BM15}. 

Given a sequence of varieties $Y_{\bn}$ indexed by elements $\bn \in \BZ_{\geq 0}^{e} \setminus \boldit{0}$, we have a tautological map 
\[
\begin{tikzcd}
\BY = \displaystyle\coprod_{\bn\in \BZ_{\geq 0}^{e} \setminus \boldit{0}}Y_{\bn} \arrow{r}{\tau} & \BZ_{\geq 0}^{e} \setminus \boldit{0}
\end{tikzcd}
\]
collapsing the whole $Y_{\bn}$ onto the correct index, namely $\bn$. 

The `geometric description' of the power structure says the following: for every effective class $[X] \in K_0(\Var_{\bfk})$, the coefficient of $\boldit{t}^{\bn}$ in 
\[
\left( 1 + \sum_{\bn \in \BZ_{\geq 0}^{e} \setminus \boldit{0}} \,[Y_{\bn}] \boldit{t}^{\bn}\right)^{[X]},
\] 
which thanks to \eqref{eqn:multiple-ps-coef} is explicitly given by the class of the quotient variety
\[
\coprod_{\substack{\boldit{k}\in \CB \\ \sum_{\boldit{i}}\boldit{i} k_{\boldit{i}} = \bn}} 
\left(\prod_{\boldit{i} \,|\,k_{\boldit{i}} \neq 0} X^{k_{\boldit{i}}} \setminus \Delta \right) \times^{G_{\boldit{k}}}  \prod_{\boldit{i} \,|\,k_{\boldit{i}} \neq 0} Y_{\boldit{i}}^{k_{\boldit{i}}},
\]
agrees with the motivic class of the variety of pairs $(K,\phi)$ where $K \subset X$ is a finite subset and $\phi \colon K \to \BY$ is a function such that $\bn = \sum_{p \in K} \tau(\phi(p))$.

The geometric description will be used in the proof of \Cref{thm:power-structure-doublenested}.

%%%%%%%%%%%%%%%%%%%%%%%%%%%%%%%%%%%%%%%%%%%%%%%%%%%%%%%%%%%%% 
\subsection{A general power structure formula}
Let $X$ be a smooth $\bfk$-variety of dimension $d$. Fix a Young diagram $\lambda$. We want to study the generating function
\[
\mathsf Z^\lambda_{X}(\boldit{q}) = \sum_{\bn \in \BRPP^\lambda} \,\bigl[X^{[\bn]}\bigr] \boldit{q}^{\bn} \in K_0(\Var_\bfk)\llbracket \boldit{q} \rrbracket,
\]
where the sum is over all reverse plane partitions with shape $\lambda$, and $\boldit{q}$ is a multivariable consisting of $\lvert \lambda \rvert$ variables $q_{\Box}$. We will exhibit an explicit formula in the case where $X=C$ is a curve.

There is a Hilbert-to-Chow type morphism \cite{Rydh1}
\[
\begin{tikzcd}
\rho_{X,\bn} \colon X^{[\bn]} \arrow{r} &  X^{(\lvert \bn \rvert)}
\end{tikzcd}
\]
sending the point $[Z] \in X^{[\bn]}$ corresponding to $Z=(Z_{\Box})_{\Box \in \lambda}$ to the $0$-cycle $\sum_{\Box \in \lambda} Z_{\Box}$.

Let $p \in X$ be a closed point. Define the \emph{punctual double nested Hilbert scheme} at $p$ to be the fibre
\[
X_p^{[\bn]} = \rho_{X,\bn}^{-1}(\lvert \bn \rvert p) \subset X^{[\bn]}.
\]
This locus parametrises tuples of subschemes all supported entirely at $p$. It does not depend on $p$, nor on $X$, but only on $d = \dim X$, by standard \'etale-local arguments exploiting the smoothness of $X$ (cf.~\cite[Lemma 1.3]{MR_nested_Quot}, \cite[Sec.~2.1]{Ric_motive_quot_locally_free} or \cite[Thm.~B]{Coh(X)-motivic}). In other words, there is a (noncanonical) isomorphism of schemes
\[
X_p ^{[\bn]} \cong (\BA^d)_0^{[\bn]},
\]
where $d = \dim X$, $p \in X$ is an arbitrary point and $0 \in \BA^d$ denotes the origin.
Define the punctual generating function
\[
\mathsf Z^\lambda_{\mathsf{punctual},d}(\boldit{q}) = \sum_{\bn \in \BRPP^\lambda}\,\bigl[(\BA^d)_0^{[\bn]}\bigr] \boldit{q}^{\bn} \in K_0(\Var_\bfk)\llbracket \boldit{q} \rrbracket.
\]

\begin{theorem}\label{thm:power-structure-doublenested}
Let $X$ be a smooth variety of dimension $d$. For every Young diagram $\lambda$, there is an identity
\[
\mathsf Z^\lambda_{X}(\boldit{q}) = \mathsf Z^\lambda_{\mathsf{punctual},d}(\boldit{q})^{[X]}.
\]
\end{theorem}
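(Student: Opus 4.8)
The plan is to prove the identity one coefficient of $\boldit{q}$ at a time, by stratifying $X^{[\bn]}$ according to the configuration of the support of the associated $0$-cycle and matching each stratum with a term in the geometric description of the power structure recalled in \Cref{sec:geom-description}; this is the standard strategy for such product formulas, the only genuinely geometric input being the \'etale-local triviality of the punctual double nested Hilbert scheme over $X$. Fix $\bn\in\BRPP^\lambda$, let $[Z]\in X^{[\bn]}$ with $Z=(Z_\Box)_{\Box\in\lambda}$, and let $K=\Supp\bigl(\rho_{X,\bn}([Z])\bigr)\subset X$ be the finite support of its associated $0$-cycle. For each $p\in K$, localising every $Z_\Box$ at $p$ produces a double nested configuration $(Z_{\Box,p})_{\Box\in\lambda}$ entirely supported at $p$; localisation preserves the inclusions $Z_\Box\subset Z_{\Box'}$, so $\length\OO_{Z_{\Box,p}}\le\length\OO_{Z_{\Box',p}}$ whenever $\Box\le\Box'$, and hence $\boldit{m}^{(p)}\colon\Box\mapsto\length\OO_{Z_{\Box,p}}$ lies in $\BRPP^\lambda$. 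Since a $0$-dimensional subscheme is the disjoint union of its localisations over its support, $\sum_{p\in K}\boldit{m}^{(p)}=\bn$, and each $\boldit{m}^{(p)}$ is nonzero because $p\in\Supp Z_\Box$ for some $\Box$. Thus every point of $X^{[\bn]}$ has a well-defined \emph{decomposition type}, namely a function $\boldit{k}=(k_{\boldit{m}})_{\boldit{m}\in\BRPP^\lambda\smallsetminus\{\boldit{0}\}}$ with $k_{\boldit{m}}\in\BZ_{\ge 0}$ almost all zero, $\sum_{\boldit{m}}\boldit{m}\,k_{\boldit{m}}=\bn$, and $k_{\boldit{m}}=\#\{p\in K:\boldit{m}^{(p)}=\boldit{m}\}$. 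Writing $S_{\boldit{k}}\subset X^{[\bn]}$ for the locally closed subset of points of type $\boldit{k}$, we obtain $X^{[\bn]}=\coprod_{\boldit{k}}S_{\boldit{k}}$ set-theoretically, hence $\bigl[X^{[\bn]}\bigr]=\sum_{\boldit{k}}[S_{\boldit{k}}]$ in $K_0(\Var_\bfk)$, the sum ranging over all $\boldit{k}$ with $\sum_{\boldit{m}}\boldit{m}\,k_{\boldit{m}}=\bn$.

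The next and main step is to identify each stratum $S_{\boldit{k}}$. Sending a configuration of type $\boldit{k}$ to its support points grouped by the value $\boldit{m}^{(p)}$, together with the local configuration at each support point --- read, via the (noncanonical, but \'etale-local and point-independent) isomorphism $X_p^{[\boldit{m}]}\cong(\BA^d)_0^{[\boldit{m}]}$ of \cite[Lemma 1.3]{MR_nested_Quot}, \cite[Sec.~2.1]{Ric_motive_quot_locally_free}, \cite[Thm.~B]{Coh(X)-motivic}, as a point of $(\BA^d)_0^{[\boldit{m}]}$ --- defines a morphism
\[
S_{\boldit{k}}\;\longrightarrow\;\Bigl(\prod_{\boldit{m}\,|\,k_{\boldit{m}}\neq 0}X^{k_{\boldit{m}}}\smallsetminus\Delta\Bigr)\times^{G_{\boldit{k}}}\prod_{\boldit{m}\,|\,k_{\boldit{m}}\neq 0}\bigl((\BA^d)_0^{[\boldit{m}]}\bigr)^{k_{\boldit{m}}}.
\]
The inverse takes a tuple of distinct points of $X$ with a local configuration attached to each and forms their disjoint union; both constructions are algebraic in families, $G_{\boldit{k}}$ acts compatibly by simultaneous relabelling, and the two maps are mutually inverse, so $S_{\boldit{k}}$ is isomorphic to the right-hand side. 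This is the only point where smoothness of $X$ is used, through the normal form for the punctual locus.

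Finally, comparing with the power structure: applying \eqref{eqn:multiple-ps-coef} and the geometric description of \Cref{sec:geom-description} to the sequence $Y_{\boldit{m}}=(\BA^d)_0^{[\boldit{m}]}$ for $\boldit{m}\in\BRPP^\lambda\smallsetminus\{\boldit{0}\}$ (and $Y_{\boldit{m}}=\varnothing$ otherwise), the coefficient of $\boldit{q}^{\bn}$ in $\mathsf Z^\lambda_{\mathsf{punctual},d}(\boldit{q})^{[X]}$ equals
\[
\sum_{\substack{\boldit{k}\\\sum_{\boldit{m}}\boldit{m}\,k_{\boldit{m}}=\bn}}\Bigl[\Bigl(\prod_{\boldit{m}\,|\,k_{\boldit{m}}\neq 0}X^{k_{\boldit{m}}}\smallsetminus\Delta\Bigr)\times^{G_{\boldit{k}}}\prod_{\boldit{m}\,|\,k_{\boldit{m}}\neq 0}\bigl((\BA^d)_0^{[\boldit{m}]}\bigr)^{k_{\boldit{m}}}\Bigr]=\sum_{\boldit{k}}[S_{\boldit{k}}]=\bigl[X^{[\bn]}\bigr],
\]
which is the coefficient of $\boldit{q}^{\bn}$ in $\mathsf Z^\lambda_X(\boldit{q})$. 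Since $\bn$ was arbitrary, this proves the theorem. I expect the identification of $S_{\boldit{k}}$ in the second step to be the main obstacle: one must check that ``take the support and the local configurations'' is a genuine morphism of varieties and not merely a bijection, which relies on the relative structure of the universal family over $S_{\boldit{k}}$ and on the \'etale-local triviality of the punctual double nested Hilbert scheme over $X$; once this is granted, everything else is bookkeeping.
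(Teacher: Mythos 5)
Your proposal is correct and follows essentially the same route as the paper: both decompose a configuration into its punctual local pieces at the (finitely many) support points, use the \'etale-local identification $X_p^{[\boldit{m}]}\cong(\BA^d)_0^{[\boldit{m}]}$, and match the resulting stratification of $X^{[\bn]}$ with the terms of the geometric description of the power structure; your strata $S_{\boldit{k}}$ are exactly the paper's loci of pairs $(K,\phi)$ of a fixed decomposition type. Your explicit attention to the strata being genuine locally closed subvarieties and the identification being a morphism in families is a reasonable amplification of a point the paper treats more briskly (it verifies the correspondence on points and defers the scheme-theoretic content to the cited geometric description of the power structure), but it is not a different argument.
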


\begin{proof}
This follows at once from the geometric description of the power structure as recalled in \Cref{sec:geom-description}. We give the full argument for the sake of completeness; it is a simple adaptation of the proof of \cite[Thm.~1]{GLMps2} to the double nested case. 

By the discussion in \Cref{sec:geom-description},
the coefficient of $\boldit{q}^{\bn}$ in the right hand side, namely the class of the variety
\[
B_{\bn} = 
\coprod_{\substack{\boldit{k}\in \CB \\ \sum_{\boldit{i}}\boldit{i} k_{\boldit{i}} = \bn}} 
\left(\prod_{\boldit{i} \,|\,k_{\boldit{i}} \neq 0} X^{k_{\boldit{i}}} \setminus \Delta \right) \times^{G_{\boldit{k}}}  \prod_{\boldit{i} \,|\,k_{\boldit{i}} \neq 0} ((\BA^d)_0^{[\boldit{i}]})^{k_{\boldit{i}}},
\]
agrees with the variety of pairs
\[
(K,\phi)
\]
where $K \subset X$ is a finite subset and 
\[
\begin{tikzcd}
\phi \colon K \arrow{r} &  \displaystyle \coprod_{\boldit{k} \in \BZ_{\geq 0}^{\lvert \lambda \rvert} \setminus \boldit{0}}(\BA^d)_0^{[\boldit{k}]}
\end{tikzcd}
\]
is a map such that $\bn = \sum_{P \in K} \tau(\phi(P))$.
To conclude, it is enough to confirm that such a pair $(K,\phi)$ determines and is determined by a point $[Z] \in X^{[\bn]}$. 

Consider a point $[Z] \in X^{[\bn]}$, corresponding to the configuration $Z = (Z_{\Box})_{\Box \in \lambda}$. Let $P_1,\ldots,P_h \in X$ be the \emph{distinct} points appearing in $\Supp\left(\bigcup_{\Box \in \lambda} Z_{\Box}\right)$. For each $i=1,\ldots,h$ there is a reverse plane partition $\boldit{k}_i$ (of shape $\lambda$) determined by 
\[
\boldit{k}_{i,\Box} = \length \OO_{Z_{\Box}}|_{P_i},
\]
and an associated punctual double nested scheme 
\[
[Z_i] \in X_{P_i}^{[\boldit{k}_i]} \cong (\BA^d)_0^{[\boldit{k}_i]},
\]
corresponding to $Z_i = (Z_{i,\Box})_{\Box \in \lambda}$, satisfying $\chi(\OO_{Z_{i,\Box}}) = \boldit{k}_{i,\Box}$. It is constructed ignoring all $P_j$ with $j \neq i$ (note that $\boldit{k}_i$ has zeros `before' the appearance of $P_i$). The configuration $Z$ is uniquely determined by the pair $(K_Z,\phi_Z)$ where
\[
K_Z=\Set{P_1,\ldots,P_h} \subset X
\]
and $\phi_Z$ is the function
\[
\begin{tikzcd}[row sep=small]
K_Z \arrow{rr}{\phi_Z} & & \displaystyle\coprod_{\boldit{k} \in \BZ_{\geq 0}^{\lvert \bn \rvert} \setminus \boldit{0}}(\BA^d)_0^{[\boldit{k}]} \\
P_i \arrow[mapsto]{rr} & & {[Z_i]}.
\end{tikzcd}
\]    
By construction, one has $\bn = \sum_{1\leq i\leq h} \boldit{k}_i = \sum_{1\leq i\leq h} \tau(\phi(P_i))$, which confirms the sought after equivalence between pairs $(K,\phi)$ and points $[Z] \in X^{[\bn]}$.
\end{proof}

%%%%%%%%%%%%%%%%%%%%%%%%%%%%%%%%%%%%%%%%%%%%%%%%%%%%%%%%%%%%% 
\subsection{The case of curves}
We provide an explicit formula for the generating series $\mathsf Z^\lambda_{X}(\boldit{q})$ in case $X$ is a smooth curve, thus proving \Cref{thm:main-C} from the introduction. 

\begin{notation}\label{notation:hooks}
Given a box $ (i,j)\in \lambda$ in the Young diagram $\lambda$, we  define the \emph{hook} $H(i,j)$ as the collection of boxes in $\lambda$ below or to the right to the given box $(i,j)\in \lambda$, namely
\[
H(i,j)=\Set{(l,k)\in \lambda| l=i,\, k\geq j \mbox{ or } l\geq i,\, k=j},
\]
and we set $h(i,j)=\lvert H(i,j)\rvert$. This is the \emph{hooklength} at the box $(i,j)$.
\end{notation}

For every $\Box \in \lambda$, define a new variable
\[
p_\Box = \prod_{\Box' \in H(\Box)} q_{\Box'}.
\]
\begin{corollary}\label{cor:motive-curve-dn}
Let $C$ be a smooth curve, $\lambda$ a Young diagram.
There is an identity
\[
\mathsf Z^\lambda_{C}(\boldit{q}) = \prod_{\Box \in \lambda} \, \left(1-p_{\Box}\right)^{-[C]} = \prod_{\Box \in \lambda} \zeta_C(p_{\Box}).
\]
In particular, $\mathsf Z^\lambda_{C}(\boldit{q})$ is a rational function in the variables $q_{\Box}$.
\end{corollary}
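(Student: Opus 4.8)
The plan is to deduce the identity from \Cref{thm:power-structure-doublenested}, specialised to $X=C$ and $d=1$: this reduces everything to two separate ingredients, an explicit description of the punctual series $\mathsf Z^\lambda_{\mathsf{punctual},1}(\boldit{q})$ and a classical combinatorial generating-function identity for reverse plane partitions.

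First I would compute the punctual series. For fixed $\bn\in\BRPP^\lambda$, the scheme $(\BA^1)_0^{[\bn]}$ parametrises configurations $(Z_\Box)_{\Box\in\lambda}$ of closed subschemes of $\BA^1=\Spec\bfk[x]$, all supported at the origin, with $Z_\Box\subseteq Z_{\Box'}$ whenever $\Box\leq\Box'$. A length-$n$ subscheme of $\BA^1$ supported at $0$ is cut out by the ideal $(x^n)$, hence is unique, and $(x^m)\supseteq(x^n)$ exactly when $m\leq n$; since $\bn$ is order preserving, the entire nested configuration is forced. Thus $(\BA^1)_0^{[\bn]}$ is a single reduced point --- equivalently, it is a closed subscheme of the product $\prod_{\Box\in\lambda}\Hilb^{\bn_\Box}_0(\BA^1)$ of punctual Hilbert schemes of a smooth curve, each of which is a reduced point. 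Hence $\mathsf Z^\lambda_{\mathsf{punctual},1}(\boldit{q})=\sum_{\bn\in\BRPP^\lambda}\boldit{q}^{\bn}$, a power series all of whose coefficients equal $1\in\BZ\subset K_0(\Var_\bfk)$, and \Cref{thm:power-structure-doublenested} gives
\[
\mathsf Z^\lambda_C(\boldit{q})=\Biggl(\,\sum_{\bn\in\BRPP^\lambda}\boldit{q}^{\bn}\Biggr)^{[C]}.
\]

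The heart of the argument is then the generating-function identity
\[
\sum_{\bn\in\BRPP^\lambda}\boldit{q}^{\bn}=\prod_{\Box\in\lambda}\frac{1}{1-p_\Box},\qquad p_\Box=\prod_{\Box'\in H(\Box)}q_{\Box'},
\]
the multivariable hook-content refinement of Stanley's hook-length formula for reverse plane partitions. I would establish it via the Hillman--Grassl correspondence (in the refined form due to Gansner): a bijection $\BRPP^\lambda\xrightarrow{\ \sim\ }\BN^\lambda$, $\bn\mapsto f$, under which the weight transforms as $\boldit{q}^{\bn}=\prod_{\Box\in\lambda}p_\Box^{f(\Box)}$; summing over all fillings $f\in\BN^\lambda$ and factoring the resulting product of geometric series gives the claim. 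A self-contained alternative is an induction on $\lvert\lambda\rvert$, peeling off an outer box and using the level-set decomposition recorded in the proof of \Cref{PropositionAtoms}, but I expect a naive induction to be subtle, because for a general poset the analogous generating function is only a rational function with a nontrivial numerator; it is precisely the hook combinatorics of Young diagrams that collapses it to a product.

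Finally I would pass to $[C]$-th powers. Axiom \ref{ps-ii} of the power structure, applied iteratively, gives $\bigl(\prod_{\Box}(1-p_\Box)^{-1}\bigr)^{[C]}=\prod_{\Box}\bigl((1-p_\Box)^{-1}\bigr)^{[C]}$, and by the remark following \Cref{ex:ps}, applied to $X=C$ with the indeterminate $t$ specialised to the monomial $p_\Box$ --- a legitimate substitution in the multiple-variable power structure of \Cref{sec:multiple-ps}, since $\Box\in H(\Box)$ forces $p_\Box\in\Fm$ --- one has $\bigl((1-p_\Box)^{-1}\bigr)^{[C]}=(1-p_\Box)^{-[C]}=\zeta_C(p_\Box)$. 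Combining the three displays yields $\mathsf Z^\lambda_C(\boldit{q})=\prod_{\Box\in\lambda}\zeta_C(p_\Box)$, and rationality in the $q_\Box$ follows from Kapranov's rationality of each $\zeta_C(t)$, the right-hand side being a finite product of rational functions evaluated at monomials. The main obstacle throughout is the purely combinatorial step; once it is in hand, everything else is formal bookkeeping with the power structure.
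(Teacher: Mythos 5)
Your proposal is correct and follows essentially the same route as the paper's proof: identify $(\BA^1)_0^{[\bn]}$ as a reduced point so that the punctual series is $\sum_{\bn}\boldit{q}^{\bn}$, invoke the Gansner/Hillman--Grassl hook product identity to rewrite it as $\prod_{\Box}(1-p_\Box)^{-1}$, and then apply \Cref{thm:power-structure-doublenested} together with the multiplicativity of the power structure. The only difference is that you spell out the power-structure bookkeeping and sketch the combinatorial bijection, where the paper simply cites Gansner; the substance is identical.
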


\begin{proof}
We have that 
\[
(\BA^1)_0^{[\bn]} \cong \Spec \bfk
\]
is a reduced point. Therefore
\[
\mathsf Z^\lambda_{\mathsf{punctual},1}(\boldit{q}) = \sum_{\bn \in \BRPP^\lambda} \boldit{q}^{\bn} =  \prod_{\Box \in \lambda} \, \left(1-p_\Box\right)^{-1}
\]
where the second equality is the combinatorial identity 
proved by Gansner\footnote{See Stanley \cite[Proposition 18.3]{Stanley_theory_application_I_II} and  Hillman-Grassl   \cite[Theorem 1]{HG_reversed_plane_partitions} for the original computation of the generating series  counting reverse plane partitions when setting $q_\Box=q$ for all boxes $\Box\in \lambda$.} \cite[Thm.~5.1]{Gans_reverse}.
The result now follows from \Cref{thm:power-structure-doublenested}.
\end{proof}

Note that \Cref{cor:motive-curve-dn} generalises the rank 1 version of the main result of \cite{MR_nested_Quot}, dealing with the classical nested Hilbert scheme of points on a curve. For higher dimensional (still smooth) varieties, it seems hard to obtain similar closed formulas, e.g.~because even the nested Quot schemes are almost always singular \cite{MR_lissite, monavari2024hyperquot}.

Applying the homomorphism of power structures $\chi \colon K_0(\Var_\BC) \to \BZ$ given by the topological Euler characteristic, and setting $q = q_\Box$ for every $\Box \in \lambda$, we obtain 
\[
\sum_{\bn \in \BRPP^\lambda} \chi(C^{[\bn]})q^{\lvert \bn\rvert} = \prod_{\Box \in \lambda} \,\left(1-q^{h(\Box)}\right)^{-\chi(C)},
\]
which was proven in \cite[Thm.~2.11]{Mon_double_nested}.

\appendix

%%%%%%%%%%%%%%%%%%%%%%%%%%%%%%%%%%%%%%%%%%%%%%%%%%%%%%%%%%%%%
%%%%%%%%%%%%%%%%%%%%%%%%%%%%%%%%%%%%%%%%%%%%%%%%%%%%%%%%%%%%%
\section{Connectedness and dimension of \texorpdfstring{$C^{[\bn]}$}{C\^n}: direct proofs}\label{sec:connectedness}

We give here a direct proof of \Cref{cor:conn}.

\begin{prop}\label{prop:connected}
Let $C$ be a smooth curve. Let $\lambda$ be a partition, and fix $\bn \in \BRPP^\lambda$. The double nested Hilbert scheme $C^{[\bn]}$ is connected.
\end{prop}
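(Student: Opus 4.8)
The plan is to produce a single connected subset $\iota(C)\subseteq C^{[\bn]}$ — a "small diagonal" copy of $C$ — through which every closed point of $C^{[\bn]}$ can be linked by a connected set, and then invoke the elementary fact that a topological space which is a union of connected subsets all containing a fixed (connected) subset is connected. Since connectedness of a finite type $\bfk$-scheme can be tested on its closed points, this will suffice.

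First I would define $\iota\colon C\to C^{[\bn]}$ on points by $p\mapsto(\bn_{\Box}\cdot p)_{\Box\in\lambda}$, the configuration in which every member divisor is supported at the single point $p$; this is the morphism furnished by \Cref{construction:linear-comb-cycles} applied with $B=C$, the diagonal $\CZ\subset C\times C$, and the trivial factorisation $\bn=\sum_{\bnu}n_{\bnu}\bnu$. Since $C$ is connected, $\iota(C)$ is connected. Next, for every tuple $\underline{\bnu}=(\bnu_1,\dots,\bnu_{\omega(\bn)})\in\Ind(\lambda)^{\omega(\bn)}$ with $\sum_{i}\bnu_i=\bn$, \Cref{construction:linear-comb-cycles} with $B=C^{\omega(\bn)}$ and $\CZ_i$ the graph of the $i$-th projection $C^{\omega(\bn)}\to C$ yields a morphism
\[
\Phi_{\underline{\bnu}}\colon C^{\omega(\bn)}\longrightarrow C^{[\bn]},\qquad (Q_1,\dots,Q_{\omega(\bn)})\longmapsto \sum_{i}\bnu_i\cdot Q_i .
\]
The source is irreducible, hence connected, so the image $W_{\underline{\bnu}}$ is connected; and restricting $\Phi_{\underline{\bnu}}$ to the diagonal $Q_1=\dots=Q_{\omega(\bn)}=p$ gives $\sum_i\bnu_i\cdot p=\bigl((\sum_i\bnu_{i,\Box})\,p\bigr)_{\Box}=(\bn_{\Box}\cdot p)_{\Box}=\iota(p)$, because $\sum_i\bnu_i=\bn$. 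Thus $\iota(C)\subseteq W_{\underline{\bnu}}$ for every such $\underline{\bnu}$.

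The remaining step is to check that every closed point $[Z]\in C^{[\bn]}$ lies in some $W_{\underline{\bnu}}$. Writing $Z=(Z_{\Box})_{\Box\in\lambda}$ and using that on a smooth curve each $Z_{\Box}$ is an effective divisor $\sum_{P}z_{\Box,P}\cdot P$ with the nesting $Z_{\Box}\subseteq Z_{\Box'}$ (for $\Box\le\Box'$) amounting precisely to $z_{\Box,P}\le z_{\Box',P}$ for all $P$, one sees that for each $P$ in the finite support the assignment $\Box\mapsto z_{\Box,P}$ is a reverse plane partition $\bn^{(P)}\in\BRPP^{\lambda}$ with $\bn=\sum_P\bn^{(P)}$. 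Decomposing each $\bn^{(P)}$ into Young indicators via \Cref{PropositionAtoms}, and using additivity of $\omega$ together with \Cref{LemmaYoungIndicatorWeightOne} and \Cref{TheoremRPPHalfFactorial} to see that the total number of indicators used is exactly $\omega(\bn)$, we get $Z=\sum_{i=1}^{\omega(\bn)}\bnu_i\cdot P_i$ for a suitable tuple $\underline{\bnu}=(\bnu_i)_i$ (with repetitions among the $P_i$ allowed), i.e.\ $[Z]=\Phi_{\underline{\bnu}}(P_1,\dots,P_{\omega(\bn)})\in W_{\underline{\bnu}}$. Hence the closed points of $C^{[\bn]}$ are covered by $\bigcup_{\underline{\bnu}}W_{\underline{\bnu}}$: if $C^{[\bn]}=U\sqcup V$ with $U,V$ open and nonempty, then $\iota(C)$, being connected, lies in one of them, say $U$; each connected $W_{\underline{\bnu}}$ meets $U$ (through $\iota(C)$), so $W_{\underline{\bnu}}\subseteq U$, so every closed point lies in $U$, contradicting that the open set $V$ is nonempty. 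Therefore $C^{[\bn]}$ is connected. The only mildly delicate point is the last one — translating an arbitrary scheme-theoretic configuration into a $\bnu$-weighted sum of closed points — but this is immediate from the structure of $0$-dimensional subschemes of a smooth curve and the combinatorial results of \Cref{subsec:fact,der}.
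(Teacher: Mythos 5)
Your proof is correct, but it follows a genuinely different route from the paper's own proof of \Cref{prop:connected}. The paper's Appendix argument is a \emph{chain} construction: for a fixed configuration $Z$ and a support point $P$ it builds a one-parameter family moving only the $P$-part of $Z$ to a variable point, and then iterates, connecting an arbitrary $[Z_1]$ to the fully punctual configuration $[Z_P]$ through a chain of (possibly singular) curves $C_1,\ldots,C_t$, moving one support point at a time. You instead give a \emph{star} argument: you cover the closed points of $C^{[\bn]}$ by the images $W_{\underline{\bnu}}$ of the irreducible varieties $C^{\omega(\bn)}$ under the maps $\Phi_{\underline{\bnu}}$ of \Cref{construction:linear-comb-cycles}, and observe that all of these images contain the small-diagonal copy $\iota(C)$. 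Your covering step --- decomposing an arbitrary configuration supportwise into reverse plane partitions $\bn^{(P)}$, factoring each by \Cref{PropositionAtoms}, and counting indicators via \Cref{LemmaYoungIndicatorWeightOne} and \Cref{TheoremRPPHalfFactorial} --- is exactly the decomposition \eqref{eqn:arbitrary-Z-dec} used in the proof of \Cref{lemma:smoothisopendense}, so your argument is in effect a self-contained expansion of the main-text proof of \Cref{cor:conn} (where the $W_{\underline{\bnu}}$ are, up to closure, the irreducible components of \Cref{prop:canonical cover}) rather than of the appendix proof. What each approach buys: yours is shorter, avoids the explicit bookkeeping of multiplicities $m^P_\Box$ and the induction on the support, and immediately recovers the stronger statement that all irreducible components share a copy of $C$; the paper's chain argument is more elementary in that it only ever uses one-parameter families over $C$ itself and makes the degeneration completely explicit, which is why it is offered as a ``direct'' proof. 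The one point worth making explicit in your write-up is the reduction of connectedness to closed points: for a scheme of finite type over a field every nonempty open subset contains a closed point, so a separation $U\sqcup V$ with $V\neq\varnothing$ indeed forces a closed point into $V$, as you use in the final step.
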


\begin{proof}
The proof has two steps. 

\underline{ Step 1.} Fix a closed point $P\in C$ and a configuration
$Z=(Z_\Box)_{\Box\in\lambda}$ of $C$ with shape $\lambda$ and underlying reverse plane partition $\bn \in \BRPP^\lambda$. Let $m_{\Box}^P$ be the multiplicity of $P$ in $Z_{\Box}$. Each box $\Box\in\lambda$ defines a family of subschemes $\mathcal Z_\Box\subset C\times C$ of length $\bn_{\Box}$ given by
\[
 \mathcal Z_\Box=
\begin{cases}
        \Set{(Q,Z_\Box - m_{\Box}^P\cdot P + m_{\Box}^P \cdot Q)|Q\in C} &\mbox{if } P\in \Supp Z_\Box\\
        C\times Z_\Box &\mbox{otherwise},
\end{cases}
\]
flat over the first factor of $C \times C$. These families are nested according to $\lambda$. Therefore, they define a flat family $\mathcal{Z} $ of configurations of $C$ with fibre $[Z]$ over the point $P\in C$. Notice that, all the members of the family $\mathcal Z$ have the same reverse plane partition as $Z$. This induces, via the universal property of $C^{[\bn]}$, a moduli morphism 
\[
\begin{tikzcd}
    C\arrow[r,"\varphi_{Z,P}"] &C^{[\bn]}
\end{tikzcd} 
\]
sending $Q \in C$ to the nested tuple $((\CZ_{\Box} \to C)^{-1}(Q))_{\Box \in \lambda}$ over $Q$. Its image is a quasiprojective, possibly singular, connected curve as soon as $P \in \Supp \bigcup_{\Box \in \lambda} Z_\Box$.

\smallbreak

\underline{Step 2.}
Let $P \in C$ be a closed point. Set $Z_P = (\boldit{n}_{\Box}\cdot P)_{\Box \in \lambda}$. Fix a point $[Z_1] \in C^{[\bn]}$. Our goal is to connect $[Z_P]$ and $[Z_1]$ via a chain of curves $C_1,\ldots,C_t \subset C^{[\bn]}$, for some $t$. Set
\[
S_1 = \Supp \left(\bigcup_{\Box} Z_{1,\Box}\right) = \Set{P_1,\ldots,P_s}\subset C.
\]
The identities
\[
Z_{1,\Box} = \sum_{1\leq i\leq s} m_{\Box}^{(i)}\cdot P_i
\]
define the multiplicities of $P_1,\ldots,P_s$ in each $Z_{1,\Box} \subset C$.
If $S_1 = \Set{P}$, we have nothing to prove. Up to relabeling the points in $S_1$, we may assume $P \neq P_1$. Construct the curve
\[
C_1 = \varphi_{Z_1,P_1}(C) \subset C^{[\bn]}.
\]
By definition, we have
\[
[Z_1] = \varphi_{Z_1,P_1}(P_1) \in C_1.
\]
Moreover, we define
\[
[Z_2] = \varphi_{Z_1,P_1}(P) \in C_1,
\]
which explicitly reads
\[
Z_{2,\Box} = Z_{1,\Box} - m_{\Box}^{(1)}\cdot P_1 + m_{\Box}^{(1)}\cdot  P,
\]
and, therefore,
\[
S_2 = \Supp \left(\bigcup_{\Box \in \lambda} Z_{2,\Box} \right) = \Set{P_2,\ldots,P_s} \cup \set{P}.
\]
Iterating this process $t \leq s$ times in total, we define points $[Z_3], \ldots, [Z_t] \in C^{[\bn]}$ via
\[
[Z_i] = \varphi_{Z_{i-1},P_{i-1}}(P),
\]
and at the same time curves $C_2,\ldots,C_{t-1} \subset C^{[\bn]}
$ via
\[
C_i=\varphi_{Z_i, P_i}(C),
\]
satisfying $[Z_i] \in C_i \cap C_{i-1}$. Finally,
\[
S_t = \Supp \left( \bigcup_{\Box \in \lambda}  Z_{t,\Box} \right) = \Set{P}.
\]
Thus $[Z_t] = [Z_P]$ and we are done.
\end{proof}

We now give a direct proof of the dimension formula given in \Cref{thm:irr components}\,\ref{irr-cpt-2}.

\begin{prop}\label{prop: dimension}
Let $C$ be a smooth curve. Let $\lambda$ be a partition, and fix $\bn \in \BRPP^\lambda$. We have
\[
\dim C^{[\bn]}=\omega(\bn).
\]
\end{prop}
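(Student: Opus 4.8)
The plan is to establish the two inequalities $\dim C^{[\bn]}\ge\omega(\bn)$ and $\dim C^{[\bn]}\le\omega(\bn)$ separately, by elementary geometric means that do not invoke the component classification of \Cref{thm:irr components}. For the lower bound I would exhibit an explicit $\omega(\bn)$-dimensional family: fix a factorisation $\bn=\sum_{\bnu\in\Ind(\lambda)}n_{\bnu}\cdot\bnu$, which exists by \Cref{PropositionAtoms} and has length $\omega(\bn)$ by \Cref{TheoremRPPHalfFactorial}, and let the $\omega(\bn)$ supporting points vary freely; concretely, sending a tuple of $\omega(\bn)$ pairwise distinct points of $C$, grouped according to the multiplicities $n_{\bnu}$, to the configuration $\sum_i\bnu_i\cdot P_i$ gives, via \Cref{construction:linear-comb-cycles}, a locally closed subscheme of $C^{[\bn]}$ of dimension $\omega(\bn)$. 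Alternatively, one may recall that, étale-locally, $C^{[\bn]}$ is cut out inside $\BA^{\lvert\bn\rvert}$ by $\sum_{(i,j)\in\lambda}(\bn_{i-1,j}+\bn_{i,j-1}-\bn_{i-1,j-1})$ equations, cf.\ \Cref{sec:local-eqns}, whence Krull's height theorem forces \emph{every} irreducible component through any point to have dimension at least $\omega(\bn)$.

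For the upper bound I would use the Hilbert--Chow-type morphism $\rho\colon C^{[\bn]}\to C^{(\lvert\bn\rvert)}$, $[Z]\mapsto\sum_{\Box\in\lambda}Z_{\Box}$. The key input is that the punctual double nested Hilbert scheme of a smooth curve is a reduced point — a configuration supported at a single point $P$ must equal $(\bn_{\Box}\cdot P)_{\Box\in\lambda}$, cf.\ the proof of \Cref{cor:motive-curve-dn}. Hence a configuration with total cycle $\sum_l m_l Q_l$ (the $Q_l$ distinct, $m_l\ge 1$) is the same datum as a decomposition $\bn=\sum_l\boldit{n}^{(l)}$ into \emph{nonzero} reverse plane partitions with $\lvert\boldit{n}^{(l)}\rvert=m_l$, of which there are only finitely many; so $\rho$ has finite fibres, and therefore $\dim C^{[\bn]}=\dim\rho\bigl(C^{[\bn]}\bigr)$. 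Stratifying $C^{(\lvert\bn\rvert)}$ by multiplicity type $\alpha$, the stratum of cycles of type $\alpha$ has dimension $\lVert\alpha\rVert$ (its number of distinct points), and if it meets the image of $\rho$ then $\bn$ decomposes into $\lVert\alpha\rVert$ nonzero reverse plane partitions. Since $\omega$ is additive and a nonzero reverse plane partition has weight at least $1$ (it is a nonempty sum of Young indicators by \Cref{PropositionAtoms}, each of weight $1$ by \Cref{LemmaYoungIndicatorWeightOne}), this forces $\lVert\alpha\rVert\le\omega(\bn)$. Taking the maximum over the finitely many relevant strata gives $\dim\rho(C^{[\bn]})\le\omega(\bn)$, and together with the lower bound this yields the claimed equality.

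A second, more computational route — the one I would use to produce an explicit worked example in the spirit of \Cref{example:dimension} — is induction on $\lvert\lambda\rvert$ via the morphism $\pi\colon C^{[\bn]}\to C^{[\boldit{m}]}$ forgetting the component indexed by a socle box $\Box=(a,b)$, where $\boldit{m}=\bn|_{\lambda\smallsetminus\{\Box\}}$ satisfies $\omega(\boldit{m})=\omega(\bn)-\Delta\bn(\Box)$. When $a=0$ or $b=0$ the map $\pi$ is a fibration with fibre $C^{(\Delta\bn(\Box))}$ and the inductive step is immediate. The genuine difficulty, which I expect to be the main obstacle, is the case $a,b\ge 1$: the fibre of $\pi$ over $[Z']$ is the space of effective divisors $Z_{a,b}$ of degree $\bn_{a,b}$ containing both $Z'_{a-1,b}$ and $Z'_{a,b-1}$, which has the expected dimension $\Delta\bn(\Box)$ on a dense open of $C^{[\boldit{m}]}$ but jumps up by $\varepsilon$ on the locus $S_{\varepsilon}\subset C^{[\boldit{m}]}$ where $\inf(Z'_{a-1,b},Z'_{a,b-1})$ exceeds the forced common divisor $Z'_{a-1,b-1}$ by degree $\varepsilon$. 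Making the induction go through then requires the codimension estimate $\dim S_{\varepsilon}\le\omega(\boldit{m})-\varepsilon$, which one can in turn extract from a secondary stratification of $S_{\varepsilon}$ (or, again, from the finiteness of the fibres of the cycle map of $C^{[\boldit{m}]}$); granting this, $\dim C^{[\bn]}\le\dim C^{[\boldit{m}]}+\Delta\bn(\Box)=\omega(\bn)$, and the reverse inequality is the lower bound above. I would present the first route as the main argument precisely because it sidesteps this bookkeeping.
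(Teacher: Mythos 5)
Your first route is correct, and it is genuinely different from the paper's argument. The paper proves \Cref{prop: dimension} by induction on the cardinality of $\Soc(\lambda)$: the base case (a single socle box $\Box_0$) uses the finite morphism $C^{[\bn]}\to C^{(\bn_{\Box_0})}$ remembering only the largest divisor, and the inductive step first reduces, via a finite forgetful morphism, to the case where the smallest socle box sits in column $0$, then splits off a product of symmetric products $C^{(\bn_{0,j_2+1}-\bn_{0,j_2})}\times\cdots\times C^{(\bn_{0,j_1}-\bn_{0,j_1-1})}$ so that $C^{[\bn]}\cong C^{[\bn']}\times\prod_k C^{(\cdot)}$ and \Cref{prop:weight-formula} closes the induction. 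Your argument instead gets the lower bound from the free locus (or from Krull's height theorem applied to the Type I equations --- the paper itself records this count in \Cref{sec:local-eqns}), and the upper bound from quasi-finiteness of the cycle map $\rho\colon C^{[\bn]}\to C^{(\lvert\bn\rvert)}$ together with the stratification of $C^{(\lvert\bn\rvert)}$ by multiplicity type: the key inputs, namely that the punctual double nested Hilbert scheme of a curve is a single point and that a nonzero element of $\BRPP^\lambda$ has weight at least $1$ (by \Cref{PropositionAtoms} and \Cref{LemmaYoungIndicatorWeightOne}), are both sound, and the additivity of $\omega$ under the support decomposition $\bn=\sum_l\boldit{n}^{(l)}$ correctly forces $\lVert\alpha\rVert\le\omega(\bn)$. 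What your route buys is a proof with no inductive bookkeeping and a reusable finiteness statement (the same one the paper exploits later for \Cref{cor:motive-curve-dn}); what the paper's route buys is an explicit algorithm for computing the dimension box by box, as illustrated in \Cref{example:dimension}. Your second, inductive sketch is closer in spirit to the paper's proof but removes a single socle box rather than a column, and, as you note, the fibre-jumping estimate for interior socle boxes is left unproved; since you only offer it as an alternative and your main argument is complete, this does not affect the correctness of the proposal.
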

\begin{proof}
We prove the statement by induction on the cardinality of the socle $\Soc(\lambda)$.
    
Suppose that the socle $\Soc(\lambda)$ contains only one box $\Box_0$. Then, we have a finite morphism 
\[ 
C^{[\bn]}\rightarrow C^{(\bn_{\Box_0})}
\]
to the symmetric product, which forgets all the boxes but the socle.
The base of the induction is now a consequence of \Cref{prop:weight-formula}.
    
Suppose now that the socle contains $k+1$ elements $\Box_1\preceq\cdots\preceq\Box_{k+1}$ totally ordered as elements of the socle.
First, notice that we can restrict to the case $\Box_1=(0,j_1)$. Indeed, if $\Box_1=(i_1,j_1)$, for some $i_1>0$, and we denote  by $\bn'\in\BRPP^{\lambda'}$ the reverse plane partition obtained removing the first $i_1-1$ columns from $\bn$ then, we have a finite morphism
    \[ 
    C^{[\bn]}\rightarrow C^{[\bn']}. 
    \]
Moreover, $\lvert\Delta \bn\rvert=\lvert\Delta \bn'\rvert$ again by \Cref{prop:weight-formula} (cf.~\Cref{def:derivative} for the definition of derivative), because they differ by subdiagrams with socle of cardinality one and the same label in the respective socle boxes.
So, we can suppose 
\[ 
\Box_1=(0,j_1),\quad \Box_2=(i_2,j_2),
\]
for some $j_1>j_2$ and $i_2>0$.
Define a new Young diagram $\lambda'$ (and a reverse plane partition $\bn'\in\BRPP^{\lambda'}$), obtained from $\lambda$ by throwing away all the boxes with lattice coordinates $(0,j_2+1), \dots, (0,j_1)$. Then, we have
\begin{align*}
    C^{[\bn]}\cong C^{[\bn']}\times C^{(\bn_{0,j_2+1}-\bn_{0,j_2})} \times \dots \times C^{(\bn_{0,j_1}-\bn_{0,j_1-1})},
    \end{align*}
which implies that
\begin{align}\label{eqn: Case II}
    \dim C^{[\bn]}=\dim C^{[\bn']}+\bn_{0,j_1}-\bn_{0,j_2}=\omega(\bn).
\end{align}
The last equality is a direct consequence of the definition of derivative of a reverse plane partition and of the inductive hypothesis. 
\end{proof}

\begin{example}\label{example:dimension}
We describe with an explicit example the procedure employed in the proof of  \Cref{prop: dimension}. Consider the following reverse plane partition.
\[
\begin{tikzpicture}
    \node at (0,0) {\young(011,12,23,5)};
    \node at (-1,-0.03) {$\bn=$};
\end{tikzpicture}
\] Its derivative is 
   \[
\begin{tikzpicture}
    \node at (0,0) {\young(010,10,10,3)};
    \node at (-1.15,0.02) {$\Delta \bn=$};
\end{tikzpicture}
   \]
   and we have $\omega(\bn)=\lvert\Delta\bn\rvert=6$. Define, for $i=1,2,3$, the reverse plane partitions $\bn_i\in\BRPP^{\lambda_i}$ by
\begin{figure}[H]
    \centering
    $\begin{matrix}
\begin{tikzpicture}
    \node at (0,0) {\young(011,12,23)};
    \node at (-1.15,-0.03) {$\bn_1=$};
\end{tikzpicture}
    \end{matrix}\quad \begin{matrix}
\begin{tikzpicture}
    \node at (0,0) {\young(11,2,3)};
    \node at (-0.95,-0.03) {$\bn_2=$};
\end{tikzpicture}
    \end{matrix}\quad \begin{matrix}
\begin{tikzpicture}
    \node at (0,0) {\young(11)};
    \node at (-0.95,-0.03) {$\bn_3=$};
\end{tikzpicture}
    \end{matrix}$
\end{figure}
\noindent 
so that, following the proof of \Cref{prop: dimension}, we have
\begin{itemize}
    \item [$\circ$] an isomorphism $C^{[\bn]}\cong C^{[\bn_1]}\times C^{(3)}$,
    \item [$\circ$] a finite morphism $C^{[\bn_1]}\to C^{[\bn_{2}]}$
    \item [$\circ$] an isomorphism $C^{[\bn_2]}\cong C^{[\bn_3]}\times C^{(1)}\times C^{(1)}$, and finally
    \item [$\circ$] $\dim C^{[\bn_3]}=1$.
\end{itemize}
This yields
\[
\dim  C^{[\bn]}=1+(3-1)+ (5-2)=6=\omega(\bn).
\]
\end{example}

%%%%%%%%%%%%%%%%%%%%%%%%%%%%%%%%%%%%%%%%%%%%%%%%%%%%%%%%%%%%%
%%%%%%%%%%%%%%%%%%%%%%%%%%%%%%%%%%%%%%%%%%%%%%%%%%%%%%%%%%%%%
\section{A worked example}\label{app:computations}

Consider the reverse plane partition
\[ 
\begin{tikzpicture}
    \node at (0,0) { \young(003,025,355)};
    \node at (-0.95,-0.0) {$\bn=$};
\end{tikzpicture}
\]
with shape $\lambda=(3,3,3)$, size $\lvert\bn\rvert = 23$ and weight $\omega(\bn) = 5$ (cf.~\Cref{ex:big}).\blfootnote{The code for the explicit computation of the example is available in the file \href{www.paololella.it/software/workedExample-DNHS.m2}{\tt workedExample-DNHS.m2}. All methods for exploring the features discussed in Sections \ref{subsec:combinatorics}-\ref{sec:virtual} are available in the \textit{Macaulay2} package \href{www.paololella.it/software/DoubleNestedHilbertSchemes.m2}{\tt DoubleNestedHilbertSchemes.m2}}

\paragraph{\it Irreducible components}
There are 19 Young indicators with shape $\lambda$ (\Cref{fig:young indicators example}) leading to 15  factorisations. According to \Cref{thm: main thm for criterion of smooth}, eight of them correspond to smooth irreducible components, while the other seven correspond to singular ones as a non trivial relation among the indicators exists:
\begin{description}
\item[$T_1$] $\bn = 2\bnu_{14} + \bnu_{15} + \bnu_{17}   + \bnu_{18} $ smooth;
\item[$T_2$] $\bn = \bnu_{8}  + \bnu_{14} + 2\bnu_{15}  + \bnu_{17}  $ smooth;
\item[$T_3$] $\bn = \bnu_{12}  + 2\bnu_{14}  + 2\bnu_{18} $ smooth (\emph{standard factorisation})\footnote{Notice that there is no complete factorisation as the derivative $\Delta\bn$ has a negative entry: $\Delta\bn(2,2) = \bn(2,2)-\bn(1,2)-\bn(2,1)+\bn(1,1) = -3$.};
\item[$T_4$] $\bn = \bnu_{8}  + \bnu_{12}  + \bnu_{14}  + \bnu_{15}  + \bnu_{18}$ singular ($\varphi_V$ is not a bijection on points as $\bnu_{8} + \bnu_{15}  = \bnu_{14} +\bnu_{18}$, cf.~\Cref{prop: bijec comb});
\item[$T_5$] $\bn = 2\bnu_{8}  + \bnu_{12}  +  2\bnu_{15} $ smooth;
\item[$T_6$] $\bn = \bnu_{13}  + \bnu_{14} + \bnu_{15}  + 2\bnu_{17}  $ smooth;
\item[$T_7$] $\bn = \bnu_{12}  + \bnu_{13}  +  \bnu_{14}  + \bnu_{17}  + \bnu_{18} $  singular ($\varphi_V$ is not a bijection on points as $\bnu_{13}  + \bnu_{17} = \bnu_{14} + \bnu_{18}$, cf.~\Cref{prop: bijec comb});
\item[$T_8$] $\bn = \bnu_{8}  + \bnu_{12}  + \bnu_{13}   + \bnu_{15} +  \bnu_{17}  $  singular ($\varphi_V$ is not a bijection on points since $\bnu_{8}    + \bnu_{15} = \bnu_{13} + \bnu_{17}$, cf.~\Cref{prop: bijec comb});
\item[$T_9$] $\bn = \bnu_{8}  +  2\bnu_{12}  + \bnu_{13}  +  \bnu_{18} $ smooth;
\item[$T_{10}$] $\bn = \bnu_{12}  + 2\bnu_{13}  +  2\bnu_{17} $ smooth;
\item[$T_{11}$] $\bn = \bnu_{7}  + \bnu_{12}  + \bnu_{14} + \bnu_{15}  + \bnu_{17}  $ singular ($\varphi_V$ is a bijection on points but the differential $\mathrm{d}\varphi_V$ is not injective as $\bnu_{7}  + 2\bnu_{12}  = \bnu_{14} + \bnu_{15} + \bnu_{17} $, cf.~\Cref{prop: inj diff comb} and \Cref{ex:big});
\item[$T_{12}$] $\bn = \bnu_{7}  + 2\bnu_{12}  + \bnu_{14}  + \bnu_{18} $  singular ($\varphi_V$ is not a bijection of points as $\bnu_{7}  + \bnu_{12}  = \bnu_{14}  + \bnu_{18} $, cf.~\Cref{prop: bijec comb});
\item[$T_{13}$] $\bn = \bnu_{7} + \bnu_{8}  + 2\bnu_{12}   + \bnu_{15} $ singular ($\varphi_V$ is not a bijection of points as $\bnu_{7}  + \bnu_{12}  =\bnu_{8}  + \bnu_{15} $, cf.~\Cref{prop: bijec comb});
\item[$T_{14}$] $\bn = \bnu_{7}   + 2\bnu_{12} + \bnu_{13}  + \bnu_{17} $  singular ($\varphi_V$ is not a bijection of points as $\bnu_{7}  + \bnu_{12} =  \bnu_{13}  + \bnu_{17}$, cf.~\Cref{prop: bijec comb});
\item[$T_{15}$] $\bn = 2\bnu_{7}  + 3\bnu_{12} $ smooth.
\end{description}

\begin{figure}
\begin{center}
\begin{tikzpicture}
\begin{scope}[shift={(0,0)}]
    \node at (-0.95,0) {$\bnu_1=$};
    \node at (0,0) {\tiny\young(111,111,111)};
\end{scope}

\begin{scope}[shift={(2.1,0)}]
    \node at (-0.95,0) {$\bnu_2=$};
    \node at (0,0) {\tiny\young(000,111,111)};
\end{scope}

\begin{scope}[shift={(4.2,0)}]
    \node at (-0.95,0) {$\bnu_3=$};
    \node at (0,0) {\tiny\young(000,000,111)};
\end{scope}

\begin{scope}[shift={(6.3,0)}]
    \node at (-0.95,0) {$\bnu_4=$};
    \node at (0,0) {\tiny\young(011,011,011)};
\end{scope}

\begin{scope}[shift={(8.4,0)}]
    \node at (-0.95,0) {$\bnu_5=$};
    \node at (0,0) {\tiny\young(011,111,111)};
\end{scope}

\begin{scope}[shift={(10.5,0)}]
    \node at (-0.95,0) {$\bnu_6=$};
    \node at (0,0) {\tiny\young(011,011,111)};
\end{scope}

\begin{scope}[shift={(12.6,0)}]
    \node at (-0.95,0) {$\bnu_7=$};
    \node at (0,0) {\tiny\young(000,011,011)};
\end{scope}

\begin{scope}[shift={(0 + 0.5,-1.2)}]
    \node at (-0.95,0) {$\bnu_8=$};
    \node at (0,0) {\tiny\young(000,011,111)};
\end{scope}

\begin{scope}[shift={(2.3+ 0.5,-1.2)}]
    \node at (-0.95,0) {$\bnu_9=$};
    \node at (0,0) {\tiny\young(000,000,011)};
\end{scope}

\begin{scope}[shift={(4.6+ 0.5,-1.2)}]
    \node at (-1,0) {$\bnu_{10}=$};
    \node at (0,0) {\tiny\young(001,001,001)};
\end{scope}

\begin{scope}[shift={(6.9+ 0.5,-1.2)}]
    \node at (-1,0) {$\bnu_{11}=$};
    \node at (0,0) {\tiny\young(001,111,111)};
\end{scope}

\begin{scope}[shift={(9.2+ 0.5,-1.2)}]
    \node at (-1,0) {$\bnu_{12}=$};
    \node at (0,0) {\tiny\young(001,001,111)};
\end{scope}

\begin{scope}[shift={(11.5+ 0.5,-1.2)}]
    \node at (-1,0) {$\bnu_{13}=$};
    \node at (0,0) {\tiny\young(001,011,011)};
\end{scope}

\begin{scope}[shift={(0 + 0.5,-2.4)}]
    \node at (-1,0) {$\bnu_{14}=$};
    \node at (0,0) {\tiny\young(001,011,111)};
\end{scope}

\begin{scope}[shift={(2.3+ 0.5,-2.4)}]
    \node at (-1,0) {$\bnu_{15}=$};
    \node at (0,0) {\tiny\young(001,001,011)};
\end{scope}

\begin{scope}[shift={(4.6+ 0.5,-2.4)}]
    \node at (-1,0) {$\bnu_{16}=$};
    \node at (0,0) {\tiny\young(000,001,001)};
\end{scope}

\begin{scope}[shift={(6.9+ 0.5,-2.4)}]
    \node at (-1,0) {$\bnu_{17}=$};
    \node at (0,0) {\tiny\young(000,001,111)};
\end{scope}

\begin{scope}[shift={(9.2+ 0.5,-2.4)}]
    \node at (-1,0) {$\bnu_{18}=$};
    \node at (0,0) {\tiny\young(000,001,011)};
\end{scope}

\begin{scope}[shift={(11.5+ 0.5,-2.4)}]
    \node at (-1,0) {$\bnu_{19}=$};
    \node at (0,0) {\tiny\young(000,000,001)};
\end{scope}
\end{tikzpicture}
\end{center}
\caption{\label{fig:young indicators example} The set of Young indicators with shape $\lambda = (3,3,3)$.}
\end{figure}

\paragraph{\it Local equations} Assume $C = \mathbb{A}^1$. Using {\em Type I} equations, the Hilbert scheme $(\mathbb{A}^1)^{[\bn]}$ is embedded in the product of classical Hilbert schemes
\[
\left((\mathbb{A}^1)^{(2)}\right) \times \left((\mathbb{A}^1)^{(3)}\right)^2 \times \left((\mathbb{A}^1)^{(5)}\right)^3 \simeq \mathbb{A}^{\lvert\bn\rvert} = \mathbb{A}^{23}. 
\]
The universal family of $(\mathbb{A}^1)^{[\bn]}$ is described by
\[
\begin{tikzpicture}[xscale=4.75,yscale=-1]
\node at (0,0) [] {\footnotesize $\mathcal{Z}_{0,0} = \varnothing = V(1)$};
\node at (1,0) [] {\footnotesize $\mathcal{Z}_{1,0} = \varnothing = V(1)$};
\node at (2,0) [] {\footnotesize $\mathcal{Z}_{2,0} = V\left(x^3 + \sum_{k=1}^3 a_{2,0,k}x^{3-k}\right)$};

\node at (0,0.5) [rotate=-90] {\small $\subset$};
\node at (0,1.5) [rotate=-90] {\small $\subset$};

\node at (0.5,0) [] {\small $\subset$};
\node at (1.5,0) [] {\small $\subset$};

\node at (0,1) [] {\footnotesize $\CZ_{0,1} = \varnothing = V(1)$};
\node at (1,1) [] {\footnotesize $\mathcal{Z}_{1,1} = V\left(x^2 + \sum_{k=1}^2 a_{1,1,k}x^{2-k}\right)$};
\node at (2,1) [] {\footnotesize $\mathcal{Z}_{2,1} = V\left(x^5 + \sum_{k=1}^5 a_{2,1,k}x^{5-k}\right)$};

\node at (0.5,1) [] {\small $\subset$};
\node at (1.5,1) [] {\small $\subset$};

\node at (1,0.5) [rotate=-90] {\small $\subset$};
\node at (1,1.5) [rotate=-90] {\small $\subset$};

\node at (0,2) [] {\footnotesize $\mathcal{Z}_{0,2} =V\left(x^3 + \sum_{k=1}^3 a_{0,2,k}x^{3-k}\right)$};
\node at (1,2) [] {\footnotesize $\mathcal{Z}_{1,2} =V\left(x^5 + \sum_{k=1}^5 a_{1,2,k}x^{5-k}\right)$};
\node at (2,2) [] {\footnotesize $\mathcal{Z}_{2,2} = V\left(x^5 + \sum_{k=1}^5 a_{2,2,k}x^{5-k}\right)$};

\node at (0.5,2) [] {\small $\subset$};
\node at (1.5,2) [] {\small $\subset$};

\node at (2,0.5) [rotate=-90] {\small $\subset$};
\node at (2,1.5) [rotate=-90] {\small $\subset$};

\end{tikzpicture}
\]
where the coefficients $a_{i,j,k}$ satisfy the conditions imposed via \eqref{eq: type i}
{\footnotesize
\begin{equation}\label{eq:type i example}
\begin{split}
 & \mathcal{Z}_{1,1} \subset \mathcal{Z}_{2,1}\ \left\{\hspace{-4pt}\begin{array}{l}
 a_{2,1,4}-a_{1,1,2}a_{2,1,2}+a_{1,1,2}^{2}-a_{1,1,1}a_{2,1,3}+2\,a_{1,1,1}a_{1,1,2}a_{2,1,1}+a_{1,1,1}^{2}a_{2,1,2}-3\,a_{1,1,1}^{2}a_{1,1,2}-a_{1,1,1}^{3}a_{2,1,1}+a_{1,1,1}^{4} = 0 \\[-3pt]
a_{2,1,5}-a_{1,1,2}a_{2,1,3}+a_{1,1,2}^{2}a_{2,1,1}+a_{1,1,1}a_{1,1,2}a_{2,1,2}-2\,a_{1,1,1}a_{1,1,2}^{2}-a_{1,1,1}^{2}a_{1,1,2}a_{2,1,1}+a_{1,1,1}^{3}a_{1,1,2} = 0\end{array}\right.
 \\
 & \mathcal{Z}_{2,0} \subset \mathcal{Z}_{2,1}\ \left\{\hspace{-4pt}\begin{array}{l}
 a_{2,1,3}-a_{2,0,3}-a_{2,0,2}a_{2,1,1}-a_{2,0,1}a_{2,1,2}+2\,a_{2,0,1}a_{2,0,2}+a_{2,0,1}^{2}a_{2,1,1}-a_{2,0,1}^{3} = 0 \\[-3pt]
a_{2,1,4}-a_{2,0,3}a_{2,1,1}-a_{2,0,2}a_{2,1,2}+a_{2,0,2}^{2}+a_{2,0,1}a_{2,0,3}+a_{2,0,1}a_{2,0,2}a_{2,1,1}-a_{2,0,1}^{2}a_{2,0,2} = 0 \\[-3pt]
a_{2,1,5}-a_{2,0,3}a_{2,1,2}+a_{2,0,2}a_{2,0,3}+a_{2,0,1}a_{2,0,3}a_{2,1,1}-a_{2,0,1}^{2}a_{2,0,3} = 0
\end{array}\right.
 \\
 &  \mathcal{Z}_{0,2} \subset \mathcal{Z}_{1,2}\ \left\{\hspace{-4pt}\begin{array}{l}
 a_{1,2,3}-a_{0,2,3}-a_{0,2,2}a_{1,2,1}-a_{0,2,1}a_{1,2,2}+2\,a_{0,2,1}a_{0,2,2}+a_{0,2,1}^{2}a_{1,2,1}-a_{0,2,1}^{3} = 0 \\[-3pt]
a_{1,2,4}-a_{0,2,3}a_{1,2,1}-a_{0,2,2}a_{1,2,2}+a_{0,2,2}^{2}+a_{0,2,1}a_{0,2,3}+a_{0,2,1}a_{0,2,2}a_{1,2,1}-a_{0,2,1}^{2}a_{0,2,2} = 0 \\[-3pt]
a_{1,2,5}-a_{0,2,3}a_{1,2,2}+a_{0,2,2}a_{0,2,3}+a_{0,2,1}a_{0,2,3}a_{1,2,1}-a_{0,2,1}^{2}a_{0,2,3} = 0
\end{array}\right.
 \\
 & \mathcal{Z}_{1,1} \subset \mathcal{Z}_{1,2}\ \left\{\hspace{-4pt}\begin{array}{l}
 a_{1,2,4}-a_{1,1,2}a_{1,2,2}+a_{1,1,2}^{2}-a_{1,1,1}a_{1,2,3}+2\,a_{1,1,1}a_{1,1,2}a_{1,2,1}+a_{1,1,1}^{2}a_{1,2,2}-3\,a_{1,1,1}^{2}a_{1,1,2}-a_{1,1,1}^{3}a_{1,2,1}+a_{1,1,1}^{4} = 0 \\[-3pt]
a_{1,2,5}-a_{1,1,2}a_{1,2,3}+a_{1,1,2}^{2}a_{1,2,1}+a_{1,1,1}a_{1,1,2}a_{1,2,2}-2\,a_{1,1,1}a_{1,1,2}^{2}-a_{1,1,1}^{2}a_{1,1,2}a_{1,2,1}+a_{1,1,1}^{3}a_{1,1,2} = 0 \\[-3pt]
\end{array}\right.
 \\[2pt]
  &  \mathcal{Z}_{1,2} \subset \mathcal{Z}_{2,2}\ \left\{ a_{2,2,1}-a_{1,2,1} = a_{2,2,2}-a_{1,2,2} = a_{2,2,3}-a_{1,2,3} = a_{2,2,4}-a_{1,2,4} = a_{2,2,5}-a_{1,2,5} = 0\right.
  \\
 &  \mathcal{Z}_{2,1} \subset \mathcal{Z}_{2,2}\ \left\{ a_{2,2,1}-a_{2,1,1} =
a_{2,2,2}-a_{2,1,2} =
a_{2,2,3}-a_{2,1,3} = a_{2,2,4}-a_{2,1,4} =
a_{2,2,5}-a_{2,1,5} = 0\right.
\end{split}
\end{equation}
}

Using {\em Type II} equations, the Hilbert scheme $(\mathbb{A}^1)^{[\bn]}$ is embedded in the product of classical Hilbert schemes
\[
\left((\mathbb{A}^1)^{(2)} \times (\mathbb{A}^1)^{(2)} \right) \times \left((\mathbb{A}^1)^{(2)} \times (\mathbb{A}^1)^{(3)} \right)^2 \times \left((\mathbb{A}^1)^{(3)} \times (\mathbb{A}^1)^{(3)} \right)^2 \simeq \mathbb{A}^{26} 
\]
and the universal family is described by
\[
\begin{tikzpicture}[xscale=5.3,yscale=-1.5]
\node at (0,0) [] {\scriptsize $\mathcal{Z}_{0,0} = \varnothing = V(1)$};
\node at (0.95,0) [] {\scriptsize $\mathcal{Z}_{1,0} = \varnothing = V(1)$};
\node at (1.9,0) [] {\scriptsize $\mathcal{Z}_{2,0} = \left\{\begin{array}{l} \hspace{-4pt}\mathcal{Z}_{1,0} + V\left(x^3 + \sum_{k=1}^3 b_{2,0,k} x^{3-k} \right)\\ \hspace{-4pt} V\left( x^3 + \sum_{k=1}^3 c_{2,0,k}x^{3-k}\right)\end{array}\right.$};

\node at (0,0.5) [rotate=-90] {\small $\subset$};
\node at (0,1.5) [rotate=-90] {\small $\subset$};

\node at (0.45,0) [] {\small $\subset$};
\node at (1.4,0) [] {\small $\subset$};

\node at (0,1) [] {\scriptsize $\mathcal{Z}_{0,1} = \varnothing = V(1)$};
\node at (0.95,1) [] {\scriptsize $\mathcal{Z}_{1,1} = \left\{\begin{array}{l} \hspace{-4pt}\mathcal{Z}_{0,1} + V\left(x^2 +\sum_{k=1}^2 b_{1,1,k} x^{2-k} \right)\\ \hspace{-4pt} \mathcal{Z}_{1,0} + V\left( x^2 +\sum_{k=1}^2 c_{1,1,k} x^{2-k}\right)\end{array}\right.$};
\node at (1.9,1) [] {\scriptsize $\mathcal{Z}_{2,1} = \left\{\begin{array}{l} \hspace{-4pt}\mathcal{Z}_{1,1} + V\left(x^3 + \sum_{k=1}^3 b_{2,1,k}x^{3-k} \right)\\ \hspace{-4pt}\mathcal{Z}_{2,0} + V\left( x^2 + \sum_{k=1}^2 c_{2,1,k}x^{2-k}\right)\end{array}\right.$};

\node at (0.45,1) [] {\small $\subset$};
\node at (1.4,1) [] {\small $\subset$};

\node at (0.95,0.5) [rotate=-90] {\small $\subset$};
\node at (0.95,1.5) [rotate=-90] {\small $\subset$};

\node at (0,2) [] {\scriptsize $\mathcal{Z}_{0,2} = \left\{\begin{array}{l} \hspace{-4pt}V\left( x^3 + \sum_{k=1}^3 b_{0,2,k}x^{3-k}\right) \\ \hspace{-4pt}\mathcal{Z}_{0,1} + V\left(x^3 + \sum_{k=1}^3 c_{0,2,k} x^{3-k} \right)\end{array}\right.$};
\node at (0.95,2) [] {\scriptsize $\mathcal{Z}_{1,2} = \left\{\begin{array}{l} \hspace{-4pt}\mathcal{Z}_{0,2} + V\left(x^2 + \sum_{k=1} b_{1,2,k}x^{2-k} \right)\\ \hspace{-4pt}\mathcal{Z}_{1,1} + V\left( x^3 + \sum_{k=1}^3 c_{1,2,k}x^{3-k}\right)\end{array}\right.$};
\node at (1.9,2) [] {\scriptsize $\mathcal{Z}_{2,2} = \left\{\begin{array}{l} \hspace{-4pt}\mathcal{Z}_{1,2} + V\left(1 \right)\\ \hspace{-4pt}\mathcal{Z}_{2,1} + V\left( 1\right)\end{array}\right.$};

\node at (0.45,2) [] {\small $\subset$};
\node at (1.4,2) [] {\small $\subset$};

\node at (1.9,0.5) [rotate=-90] {\small $\subset$};
\node at (1.9,1.5) [rotate=-90] {\small $\subset$};

\end{tikzpicture}
\]
where the coefficients $b_{i,j,k}$ and $c_{i,j,k}$ satisfy the conditions imposed via \eqref{eq: type ii}
{\footnotesize
\begin{equation}\label{eq:type ii example}
\begin{split}
    \mathcal{Z}_{2,0} = (\mathcal{Z}_{2,0} - \mathcal{Z}_{1,0}) + \mathcal{Z}_{1,0}&\ \left\{c_{2,0,1}-b_{2,0,1} = c_{2,0,2}-b_{2,0,2} = c_{2,0,3}-b_{2,0,3} = 0\right.
    \\[2pt]
    (\mathcal{Z}_{1,1} - \mathcal{Z}_{0,1}) + (\mathcal{Z}_{0,1}-\mathcal{Z}_{0,0}) = (\mathcal{Z}_{1,1} - \mathcal{Z}_{1,0}) + (\mathcal{Z}_{1,0}-\mathcal{Z}_{0,0})&\ \left\{ c_{1,1,1}-b_{1,1,1} =
c_{1,1,2}-b_{1,1,2}=0\right.\\
    (\mathcal{Z}_{2,1} - \mathcal{Z}_{1,1}) + (\mathcal{Z}_{1,1}-\mathcal{Z}_{1,0}) = (\mathcal{Z}_{2,1} - \mathcal{Z}_{2,0}) + (\mathcal{Z}_{2,0}-\mathcal{Z}_{1,0})&\ \left\{\hspace{-4pt}\begin{array}{l} 
    -c_{2,1,1}+c_{1,1,1}+b_{2,1,1}-b_{2,0,1}=0\\[-3pt]
-c_{2,1,2}+c_{1,1,2}+b_{2,1,2}-b_{2,0,2}+b_{2,1,1}c_{1,1,1}-b_{2,0,1}c_{2,1,1}=0\\[-3pt]
b_{2,1,3}-b_{2,0,3}+b_{2,1,2}c_{1,1,1}+b_{2,1,1}c_{1,1,2}-b_{2,0,2}c_{2,1,1}-b_{2,0,1}c_{2,1,2}=0\\[-3pt]
b_{2,1,3}c_{1,1,1}+b_{2,1,2}c_{1,1,2}-b_{2,0,3}c_{2,1,1}-b_{2,0,2}c_{2,1,2}=0\\[-3pt]
b_{2,1,3}c_{1,1,2}-b_{2,0,3}c_{2,1,2}=0
    \end{array}\right.\\
     \mathcal{Z}_{0,2} = (\mathcal{Z}_{0,2} - \mathcal{Z}_{0,1}) + \mathcal{Z}_{0,1}&\ \left\{ c_{0,2,1}-b_{0,2,1} = c_{0,2,2}-b_{0,2,2} = c_{0,2,3}-b_{0,2,3}=0\right.\\
   (\mathcal{Z}_{1,2} - \mathcal{Z}_{0,2}) + (\mathcal{Z}_{0,2}-\mathcal{Z}_{0,1}) = (\mathcal{Z}_{1,2} - \mathcal{Z}_{1,1}) + (\mathcal{Z}_{1,1}-\mathcal{Z}_{0,1}) &\ \left\{ \hspace{-4pt}\begin{array}{l} 
   -c_{1,2,1}+c_{0,2,1}+b_{1,2,1}-b_{1,1,1}=0\\[-3pt]
-c_{1,2,2}+c_{0,2,2}+b_{1,2,2}-b_{1,1,2}+b_{1,2,1}c_{0,2,1}-b_{1,1,1}c_{1,2,1}=0\\[-3pt]
-c_{1,2,3}+c_{0,2,3}+b_{1,2,2}c_{0,2,1}+b_{1,2,1}c_{0,2,2}-b_{1,1,2}c_{1,2,1}-b_{1,1,1}c_{1,2,2}=0\\[-3pt]
b_{1,2,2}c_{0,2,2}+b_{1,2,1}c_{0,2,3}-b_{1,1,2}c_{1,2,2}-b_{1,1,1}c_{1,2,3}=0\\[-3pt]
b_{1,2,2}c_{0,2,3}-b_{1,1,2}c_{1,2,3}=0
   \end{array}\right.\\
    (\mathcal{Z}_{2,2} - \mathcal{Z}_{1,2}) + (\mathcal{Z}_{1,2}-\mathcal{Z}_{1,1}) = (\mathcal{Z}_{2,2} - \mathcal{Z}_{2,1}) + (\mathcal{Z}_{2,1}-\mathcal{Z}_{1,1})& \ \left\{
    c_{1,2,1}-b_{2,1,1} = c_{1,2,2}-b_{2,1,2} = c_{1,2,3}-b_{2,1,3} = 0\right.
\end{split}
\end{equation}
}

We notice that the isomorphism between the two representing schemes can be explicitly described looking at any decomposition of a divisor $\mathcal{Z}_{i,j}$ as sums of consecutive differences leading to $\mathcal{Z}_{0,0}$. For instance, 
{\footnotesize
\begin{align*}
    & \mathcal{Z}_{2,2} = (\mathcal{Z}_{2,2} - \mathcal{Z}_{1,2}) + (\mathcal{Z}_{1,2} - \mathcal{Z}_{1,1}) + (\mathcal{Z}_{1,1} - \mathcal{Z}_{0,1}) + (\mathcal{Z}_{0,1} - \mathcal{Z}_{0,0}) + \mathcal{Z}_{0,0}\\[-2pt]
    &\hspace{1cm}\Leftrightarrow x^5 + \textstyle\sum_{k=1}^5 a_{2,2,k}x^{5-k} = \left(x^3+\sum_{k=1}^3c_{1,2,k}x^{3-k}\right)\left(x^2+\sum_{k=1}^2b_{1,1,k}x^{2-k}\right) \\[-4pt]
    &\hspace{2cm}\Leftrightarrow\quad \left\{\begin{array}{l} a_{2,2,1} = c_{1,2,1}+b_{1,1,1}\\[-4pt] 
    a_{2,2,2} = c_{1,2,2}+b_{1,1,2}+b_{1,1,1}c_{1,2,1}\\[-4pt]
    a_{2,2,3} = c_{1,2,3}+b_{1,1,2}c_{1,2,1}+b_{1,1,1}c_{1,2,2}\\[-4pt] 
    a_{2,2,4} =  b_{1,1,2}c_{1,2,2}+b_{1,1,1}c_{1,2,3}\\[-4pt] 
    a_{2,2,5} = b_{1,1,2}c_{1,2,3}\end{array}\right.
\end{align*}
}
\paragraph{\it Embedding in the tangent space at the origin} The tangent space to $(\mathbb{A}^1)^{[\bn]}$ at the origin is 9-dimensional and its equations are the linear parts of equations in \eqref{eq:type i example} or in \eqref{eq:type ii example}. In both cases, the Hilbert scheme $(\mathbb{A}^1)^{[\bn]}$ in $T_{[0]} (\mathbb{A}^1)^{[\bn]} \simeq \mathbb{A}^9$ is cut out by 4 equations: 2 of degree 4 and 2 of degree 5 (see \Cref{rk:equations-degree}).

\newpage

\ifx\undefined\bysame
\newcommand{\bysame}{\leavevmode\hbox to3em{\hrulefill}\,}
\fi

\vspace{\stretch{1}}

\noindent
{\small Michele Graffeo \\
\address{SISSA, Via Bonomea 265, 34136, Trieste (Italy)} \\
\href{mailto:mgraffeo@sissa.it}{\texttt{mgraffeo@sissa.it}}
}

\smallskip

\noindent
{\small Paolo Lella \\
\address{Dipartimento di Matematica, Politecnico di Milano,  Piazza Leonardo da Vinci 32, 20133 Milano (Italy)} \\
\href{mailto:paolo.lella@polimi.it}{\texttt{paolo.lella@polimi.it}}
}

\smallskip

\noindent
{\small Sergej Monavari \\
\address{\'Ecole Polytechnique F\'ed\'erale de Lausanne (EPFL),  CH-1015 Lausanne (Switzerland)} \\
\href{mailto:sergej.monavari@epfl.ch}{\texttt{sergej.monavari@epfl.ch}}
}

\smallskip

\noindent
{\small Andrea T. Ricolfi \\
\address{SISSA, Via Bonomea 265, 34136, Trieste (Italy)} \\
\href{mailto:aricolfi@sissa.it}{\texttt{aricolfi@sissa.it}}
}

\smallskip

\noindent
{\small Alessio Sammartano \\
\address{Dipartimento di Matematica, Politecnico di Milano,  Piazza Leonardo da Vinci 32, 20133 Milano (Italy)} \\
\href{mailto:alessio.sammartano@polimi.it}{\texttt{alessio.sammartano@polimi.it}}
}

\end{document}